\definecolor{LimeGreen}{cmyk}{0.50, 0.5, 1, 0}
\newcommand{\eps}{\varepsilon} 
\newcommand{\dx}{\, {\rm d}x}
\newcommand{\dy}{\, {\rm d}y}
\newcommand{\dz}{\, {\rm d}z}
\newcommand{\ds}{\, {\rm d}s}
\newcommand{\dt}{\, {\rm d}t}
\newcommand{\e}{\varepsilon}
\DeclareMathOperator{\dist}{dist}
\newcommand{\Sph}{{\mathbb S}}
\newcommand{\eg}{{\it e.g.}, }
\newcommand{\ie}{{\it; i.e.}, }
\newcommand{\cf}{{\it cf.\ }}
\theoremstyle{plain}
\newtheorem{theorem}{Theorem}[section]
\newtheorem{lemma}[theorem]{Lemma}
\newtheorem{proposition}[theorem]{Proposition}
\newtheorem{corollary}[theorem]{Corollary}
\numberwithin{equation}{section}
\newcommand{\N}{\mathbb{N}}
\newcommand{\Z}{\mathbb{Z}}
\newcommand{\Q}{\mathbb{Q}}
\newcommand{\R}{\mathbb{R}}
\newcommand{\F}{\mathscr{F}}
\newcommand{\E}{\mathscr{E}}
\newcommand{\Adm}{\mathscr{A}}
\newcommand{\A}{\mathcal{A}}
\renewcommand{\S}{\mathbb{S}}
\renewcommand{\L}{\mathcal{L}}
\renewcommand{\H}{\mathcal{H}}
\newcommand{\dHn}{\, {\rm d}\H^{n-1}}
\newcommand{\loc}{\mathrm{loc}}
\newcommand{\x}{\times }
\newcommand{\m}{\mathbf{m}}
\renewcommand{\hom}{\mathrm{hom}}
\newcommand{\defas}{:=}
\newcommand{\wto}{\rightharpoonup}
\newcommand{\wcont}{\subset\subset}
\newcommand{\LtL}{L^0(\R^n;\R^m)\times L^0(\R^n)}
\newcommand{\uu}{{\rm u}}
\newcommand{\vv}{{\rm v}}
\theoremstyle{definition}
\theoremstyle{remark}
\newtheorem{remark}[theorem]{Remark}
\renewcommand{\tilde}{\widetilde}
\newcommand{\sm}{\setminus}
\renewcommand{\d}{\, \mathrm{d}}
\title[Gradient damage models for heterogeneous materials]
{Gradient damage models for heterogeneous materials}
\author[A. Bach]{Annika Bach}
\address[A. Bach]{Dipartimento di Matematica ``Guido Castelnuovo'', Sapienza Universit\`{a} di Roma, Italy}
\email{annika.bach@uniroma1.it}
\author[T. Esposito]{Teresa Esposito}
\address[T. Esposito]{}
\email{teres.esposito@gmail.com}
\author[R. Marziani]{Roberta Marziani}
\address[R. Marziani]{Angewandte Mathematik, WWU M\"unster, Germany}
\email[Roberta Marziani]{roberta.marziani@uni-muenster.de}
\author[C. I. Zeppieri]{Caterina Ida Zeppieri}
\address[C. I. Zeppieri]{Angewandte Mathematik, WWU M\"unster, Germany}
\email[Caterina Zeppieri]{caterina.zeppieri@uni-muenster.de}
\begin{document}
	

	\begin{abstract}
	In this paper we study the asymptotic behaviour of phase-field functionals of Ambrosio and Tortorelli type allowing for small-scale oscillations both in the volume and in the diffuse surface term. The functionals under examination can be interpreted as an instance of a static gradient damage model for heterogeneous materials. Depending on the mutual vanishing rate of the approximation and of the oscillation parameters, the effective behaviour of the model is fully characterised by means of $\Gamma$-convergence.     
		
	\end{abstract}
	
	\maketitle
	
	{\small
	\noindent \keywords{\textbf{Keywords:} gradient damage model, phase-field functionals, homogenisation, $\Gamma$-convergence, elliptic approximation, free-discontinuity functionals.}
	
	\medskip
	
	\noindent \subjclass{\textbf{MSC 2010:} 
49J45, 
49Q20,  
74Q05.  
}}

	\section{Introduction}
	Damage models for elastic materials describe the degradation of the elastic properties of a body as a consequence of some applied loads \cite{PAMM, PM2}. The total energy of an elastic material undergoing damage then depends on two variables: the deformation $u:A\to\R^m$ ($A\subset\R^n$ open and bounded representing the reference configuration of the material) and an internal variable $v:A\to[0,1]$ measuring at each point the damage state of the material (the value $v=1$ corresponding to the original sound state and the value $v=0$ corresponding to the totally damaged state).
%
%
%
In a periodically heterogeneous setting and at fixed time, this energy can be described in terms of phase-field functionals of the form 	
\begin{equation}\label{intro:en}
\F_\e(u,v,A) = \int_A(v^2+\eta_\e)f\Big(\frac{x}{\delta_\e},\nabla u\Big)\dx + \frac{1}{\e} \int_A (1-v)^2 \dx + \e \int_A h\Big(\frac{x}{\delta_\e},\nabla v\Big)\dx\,,	
\end{equation}
where $\e>0$ is a small parameter, $0<\eta_\e \ll \e$, and $\delta_\e>0$ is infinitesimal, as $\e \to 0$. 
The integrands $f$ and $h$ are $(0,1)^n$-periodic in the first variable and satisfy growth and coercivity conditions of order $2$ in the second variable (see Section \ref{sect:setting} for the full list of assumptions). Correspondingly, $\F_\e$ is defined for $(u,v)\in W^{1,2}(A;\R^m)\times W^{1,2}(A;[0,1])$.

The three terms in~\eqref{intro:en} can be interpreted as follows. The first term accounts for the stored elastic energy and reflects the worsening of the elastic properties of the material due to the damage process. Namely, in the regions where the damage occurs, that is, where $v\simeq 0$, the deformation gradient $\nabla u$ becomes very large in norm and hence $u$ singular. 
The second term represents the energy dissipated in the damage process, hence it is maximal when the material is totally damaged. 
Together with the third term, which penalises the spatial variations of $v$, it forces the damage to localise for small $\e$ in diffuse regions of size proportional to $\e$,
around the set where $|\nabla u|$ blows up. Then, asymptotically, the damage-localisation gives rise to sharp cracks and the functionals in~\eqref{intro:en} are expected to behave, in the limit, as a fracture model. 

In a homogeneous setting, choosing $f(x,\nabla u)= |\nabla u|^2$, and $h(x,\nabla v)=|\nabla v|^2$, the functionals $\F_\e$ reduce to the classical Ambrosio-Tortorelli model which is indeed known to $\Gamma$-converge to the prototypical brittle fracture model given by the Mumford-Shah functional \cite{AT90,AT92}.
If now instead $\eta_\e \sim \e$, then the static damage model described by $\F_\e$ can be shown to $\Gamma$-converge to a fracture model of cohesive type \cite{DMI} (see also \cite{Fo-Iu, I}).

Moreover, in a heterogeneous scale-free setting \cite{Foc01}, that is, when functionals of the form 
\begin{equation}\label{intro:Foc}
\int_A v^2 f(x,\nabla u)\dx + \frac{1}{\e} \int_A (1-v)^2 \dx + \e \int_A h(x,\nabla v)\dx   
\end{equation}
are considered, the corresponding $\Gamma$-limit is given by the nonhomogeneous, anisotropic brittle energy  \`a la Griffith  
\[
\int_A f(x,\nabla u)\dx + 2 \int_{S_u \cap A} \sqrt{h(x,\nu_u)}\d\mathcal H^{n-1},   
\]
where now $u$ belongs to $GSBV^2(A;\R^m)$\ie is a generalised special function of bounded variation \cite{AFP, DMFT}. In this functional framework $\nabla u$ denotes the approximate differential of $u$, $S_u$ its discontinuity set, and $\nu_u$ the normal to $S_u$.  

In the recent work \cite{BMZ} the authors analysed the limit behaviour of a general class of heterogeneous, scale-dependent phase-field functionals of the form  
\begin{equation}\label{intro:BMZ}
\int_A v^2 f_\e(x,\nabla u)\dx + \frac{1}{\e} \int_A g_\e(x,v,\e\nabla v)\dx   
\end{equation}
where $f_\e$ and $g_\e$ belong to suitable classes of integrands including, in particular, the choices 
\[
f_\e(x,\nabla u)=f\Big(\frac{x}{\delta_\e},\nabla u\Big), \quad g_\e(x,v,\nabla v)= (1-v)^2 + h\Big(\frac{x}{\delta_\e},\nabla v\Big),
\]
with $f$ and $h$ as in \eqref{intro:en}. The main result in \cite{BMZ} establishes that the functionals in \eqref{intro:BMZ} essentially behave like \eqref{intro:Foc}. That is, the (possibly sequence-dependent) $\Gamma$-limit of \eqref{intro:BMZ} is a free-discontinuity functional of the form 
\[
\int_A f_\infty(x,\nabla u)\dx + \int_{S_u \cap A} g_\infty(x,\nu_u)\d\mathcal H^{n-1},   
\]  
where $f_\infty$ and $g_\infty$ can be characterised in terms of limits of suitable scaled minimisation problems. Furthermore, the minimisation problem   
providing $f_\infty$ only involves $f_\e$ while the formula providing $g_\infty$ only involves the minimisation of $g_\e$ (over pairs $(u,v)$ along which the first term in \eqref{intro:BMZ} vanishes).   In this respect, the $\Gamma$-convergence result in \cite{BMZ} can be seen as the phase-field analogue of the decoupling result for free-discontinuity 
functionals proven in \cite{CDMSZ19}. In particular, we notice that in the regular elliptic setting the limit decoupling immediately implies that the $\Gamma$-limit of  \eqref{intro:BMZ} does not depend on the jump opening of $u$ and hence is a brittle energy.

Building upon the limit decoupling obtained in \cite{BMZ} it can be proven (\emph{cf.}\ Theorem \ref{thm:gamma_limit} and Proposition \ref{prop:volume_density}) that the functionals $\F_\e$ $\Gamma$-converge to a free-discontinuity functional of the form         
\begin{equation}\label{intro:G-lim}
\int_A f_\hom(\nabla u)\dx + \int_{S_u \cap A} g^\ell_\hom(\nu_u)\d\mathcal H^{n-1}, 
\end{equation}
where $f_\hom$ is given by the classical formula of periodic homogenisation \cite{Braides85, Mueller87}\ie
\[
f_{\rm hom}(\xi)= \lim_{r\to+\infty}\frac{1}{r^n}\inf\Biggl\{\int_{Q_r(0)}f(x,\nabla u)\dx\colon u\in W^{1,2}(Q_r(0);\R^m)\,,u=u_\xi\ \text{near}\ \partial Q_r(0) \Biggr\}, 
\]
while the homogeneous surface energy density $g^\ell_\hom$ will depend on the parameter 
\[
\ell=\lim_{\e \to 0}\frac{\e}{\delta_\e}\in [0,+\infty],
\] 
or, in other words, on the mutual vanishing rate of the approximation and the oscillation parameters. Then, the main contribution of the present paper consists in the characterisation of $g^\ell_\hom$ in the three limit regimes: $\ell=0$, $\ell \in (0,+\infty)$, and $\ell=+\infty$ ({\it cf.}\ Theorem \ref{thm:main_theorem}, Propositions \ref{prop:supercritical_case}, \ref{prop:critical_case}, and \ref{prop:subcritical:case}). 

We notice that in the one-dimensional setting\ie for $n=m=1$, the computation of the $\Gamma$-limit of the damage model \eqref{intro:en} can be carried out directly, by hands, without resorting to the general convergence result in \cite{BMZ}, as shown in \cite{BEMZ}.  
On the other hand, we observe that the analysis to determine $g^\ell_\hom$ shares some similarities with the homogenisation of phase-transition functionals of Modica-Mortola type as in \cite{ABC01}. We also mention here the work \cite{Morfe} where the stochastic analogue of \cite{ABC01} is considered, though only for $\delta_\e=\e$. Moreover, in the papers \cite{BZ,CFG,CFHP,M} further variants of phase-transition functionals are considered where a transition-scale and an oscillation-scale appear at the same time and interact in the $\Gamma$-limit.    

We now give a brief heuristic account of the $\Gamma$-convergence result, Theorem \ref{thm:main_theorem}, in the three regimes  $\ell=0$, $\ell \in (0,+\infty)$, and $\ell=+\infty$. As already observed, thanks to the general analysis developed in \cite{BMZ}, it is not difficult to show that the functionals $\F_\e$ behave like 
\begin{equation}\label{intro:eff-funct}
 \int_A(v^2+\eta_\e)f_\hom(\nabla u)\dx + \frac{1}{\e} \int_A (1-v)^2 \dx + \e \int_A h\Big(\frac{x}{\delta_\e},\nabla v\Big)\dx.
 \end{equation}
 Then, if $\ell=0$, which corresponds to the case $\e \ll \delta_\e$, we can regard $\delta_\e$ and the variable $x/\delta_\e$ as fixed and let first $\e \to 0$. In this way, arguing as in the case of \eqref{intro:Foc}
 we would get the $\delta_\e$-dependent free-discontinuity functionals 
 \begin{equation}\label{intro:sub}
 \int_A f_\hom(\nabla u)\dx+ 2 \int_{S_u \cap A} \sqrt{h\Big(\frac{x}{\delta_\e},\nu_u\Big)}\d\mathcal H^{n-1}.   
 \end{equation} 
 Therefore, appealing to \cite{BDV}, letting $\delta_\e \to 0$ yields the homogeneous free-discontinuity function \eqref{intro:G-lim} with surface energy density $g_\hom^0$ given by
 \[
\begin{split}
g^0_{\rm hom}(\nu)= 2 \lim_{r\to+\infty}\frac{1}{r^{n-1}}\inf\Biggl\{\int_{S_u\cap Q^\nu_r(0)}&\sqrt{h(x,\nu_u)}\dHn\colon \\
&u\in BV(Q_r^\nu(0);\{0,e_1\})\,, u=u^\nu\ \text{near}\ \partial Q^\nu_r(0)\Biggr\}\,,
\end{split}
 \]
 where $u^\nu$ denotes the jump function defined as
 \begin{equation*}
			u^{\nu}(x)=\begin{cases} e_1 & \text{if }\; x\cdot \nu \geq 0\,, 
				\cr
				0 & \text{if }\; x\cdot \nu < 0\,.
			\end{cases}
		\end{equation*}
We notice that in this regime the passage from $\F_\e$ (or, equivalently, from \eqref{intro:eff-funct}) to the functionals \eqref{intro:sub}  can be made rigorous by combining the Modica-Mortola trick with a classical argument of Ambrosio based on the co-area formula. These two ingredients allow us to estimate from below the surface term in \eqref{intro:eff-funct}
with an $\e$-independent heterogeneous and anisotropic perimeter functional and then to conclude (see the proof of Proposition \ref{prop:supercritical_case}).     
		
If $\ell\in (0,+\infty)$, which corresponds to the case $\e \sim \delta_\e$, approximation and homogenisation procedure cannot be decoupled and $g^\ell_\hom$ is given by an asymptotic cell-formula in which the whole Modica-Mortola term in \eqref{intro:en} appears. More precisely, in this regime $g^\ell_\hom$ is given by
\begin{equation}\label{intro:critical}
\begin{split}
					g^\ell_{\rm hom}(\nu)= \lim_{r\to+\infty}\frac{1}{r^{n-1}}\inf\Biggl\{\int_{Q^\nu_r(0)}&\left((1-v)^2+h(\ell x,\nabla v)\right)\dx
					\colon \\
					& v\in W^{1,2}(Q^\nu_r(0))\,,0\le v\le1\colon\,\exists u\in W^{1,2}(Q_r^\nu(0);\R^m)\
					\text{with}\\& v\nabla u=0\ \text{a.e. in}\ Q_r^\nu(0) \ \text{and}\
					(u,v)=(\bar u^\nu,\bar v^\nu)\ \text{near}\ \partial Q^\nu_r(0)\Biggr\}\,,
				\end{split}  		
\end{equation} 
where $\bar u^\nu$ is a suitable regularisation of $u^\nu$ while $\bar v^\nu$ is equal to $1$ on most of the boundary of the cube $Q^\nu_r(0)$ and equal to $0$ in a neighbourhood of the hyperplane $x\cdot \nu=0$, and chosen in a way such that $\bar v^\nu \nabla \bar u^\nu=0$ a.e.\ in $Q_r^\nu(0)$ (see Section \ref{subs:notation}, \ref{uv-bar} for the definition of $(\bar u^\nu,\bar v^\nu)$ and \cf \cite[Theorem 3.5]{BMZ}). 
We notice that in view of the growth conditions satisfied by $f$ (see \ref{hyp:growth-f}), the first term in $\F_\e$ vanishes on those pairs $(u,v)$ which are admissible in \eqref{intro:critical}.  

Eventually, if $\ell=+\infty$, which corresponds to the case $\e \gg \delta_\e$, heuristically, we can first let $\delta_\e \to 0$ thus getting the spatially homogeneous functionals
\begin{equation}\label{intro:super}
 \int_A(v^2+\eta_\e)f_\hom(\nabla u)\dx + \frac{1}{\e} \int_A (1-v)^2 \dx + \e \int_A h_{\hom}(\nabla v)\dx,
 \end{equation}
 where 
 \begin{equation*}
				\begin{split}
					h_{\rm hom}(w)= \inf\Biggl\{\int_{(0,1)^n}h(x,\nabla v+w)\dx\colon v\in W_0^{1,2}((0,1)^n)\Biggr\}\,.
				\end{split}
\end{equation*}
Hence, letting $\e\to 0$ and invoking \cite{Foc01} give
\begin{equation*}
 \int_A f_\hom(\nabla u)\dx + 2 \int_{S_u \cap A} \sqrt{h_{\hom}(\nu)}\d \mathcal H^{n-1}
 \end{equation*}
in~\eqref{intro:G-lim}, so that in this case
\[
g^\infty_{\hom}(\nu)= 2 \sqrt{h_{\hom}(\nu)}. 
 \] 
 We notice that in this regime the most delicate step in the proof of the $\Gamma$-converge result is to show that $\F_\e$ actually behaves like \eqref{intro:super} or, in other words, that the passage to the limit as $\delta_\e \to 0$ can be rigorously justified.   
 To do so we show that we can replace the $v$-variables of a sequence $(u_\e,v_\e)$ with equi-bounded energy with a suitably averaged sequence $(\tilde v_\e)$ such that  
 \begin{equation}\label{intro:es-super}
 \frac{1}{\e} \int_A (1-v_\e)^2 \dx + \e \int_A h\Big(\frac{x}{\delta_\e},\nabla v_\e\Big)\dx \geq \frac{1}{\e} \int_A (1-\tilde v_\e)^2 \dx + \e \int_A h_{\hom}(\nabla \tilde v_\e)\dx+o(1).
  \end{equation}
 However, in order to ensure that after this replacement the corresponding volume term  
 \begin{equation}\label{intro:volume}       	
\int_A(\tilde v_\e^2+\eta_\e)f\Big(\frac{x}{\delta_\e},\nabla u_\e\Big)\dx,  
\end{equation}
or, equivalently,
\[
\int_A(\tilde v_\e^2+\eta_\e) f_\hom(\nabla u_\e)\dx, 
\]
remains uniformly bounded, we need some additional information on the blow-up rate of $|\nabla u_\e|$ which shall be related to the scale of oscillations $\delta_\e$ in the following way
\begin{equation}\label{intro:rate}
\int_A |\nabla u_\e|^2 \dx \sim \frac{1}{\delta_\e}.  
\end{equation}
The latter is then enforced by requiring that in the regime $\e \gg \delta_\e$ the infinitesimal parameter $\eta_\e$ appearing in \eqref{intro:en} is exactly of order $\delta_\e$ so that, thanks to the growth conditions satisfied by $f$, the assumption in \eqref{intro:rate} is automatically satisfied by the $u$-variables of any sequence $(u_\e,v_\e)$ with equi-bounded energy. 	

To conclude we would like to briefly comment on the additional assumption \eqref{intro:rate} which might be a drawback of our specific method of proof, inspired by~\cite{ABC01} (see the proof of Proposition \ref{prop:subcritical:case}). In fact, on the one hand the definition of the auxiliary sequence $(\tilde v_\e)$ is somehow dictated by the key estimate \eqref{intro:es-super} which, in its turn, is compatible with an analogous estimate for \eqref{intro:volume} only if also this term can be put in some relation with the oscillation parameter $\delta_\e$. On the other hand, from a modelling point of view, an assumption on the convergence rate to zero of $\eta_\e$ (so to enforce \eqref{intro:rate}) does not appear to be too restrictive.  We finally observe that also in the homogenisation of the Modica-Mortola functionals \cite{ABC01} the regime $\ell=+\infty$ is the most delicate one and requires an additional assumption on the vanishing rate of $\delta_\e$ as $\e \to 0$.

\subsection {Outline of the paper} In Section \ref{sect:setting} we set some notation, define the phase-field functionals $\F_\e$, and recall some preliminaries. Section \ref{sect:main} is devoted to the statement of the main $\Gamma$-convergence result, Theorem \ref{thm:main_theorem}, and to the proof of a convergence result for some associated minimisation problems, Corollary \ref{cor:conv-min}. Then the proof of Theorem \ref{thm:main_theorem} is carried over in a number of intermediate steps throughout sections \ref{sec:gamma-limit} - \ref{sec:finer}.  Namely, in Section \ref{sec:gamma-limit}, Theorem \ref{thm:gamma_limit} we prove that the functionals $\F_\e$ $\Gamma$-converge to a spatially homogeneous free-discontinuity functional and in Proposition \ref{prop:volume_density} we show that its volume energy density coincides with $f_\hom$. In Section \ref{sec:larger} we consider the regime $\ell=0$ (or equivalently $\e\ll \delta_\e$) and determine the homogenised surface energy density $g_\hom^0$ (see Proposition \ref{prop:supercritical_case}). Then, Section \ref{sec:same} is devoted to the characterisation of $g^\ell_\hom$ in the regime $\ell\in (0,+\infty)$ (or equivalently $\e\sim \delta_\e$) (see Proposition \ref{prop:critical_case}). Eventually, Section \ref{sec:finer} deals with the regime $\ell=+\infty$ (or equivalently $\e\gg \delta_\e$), where in this case to determine $g^\infty_\hom$ we make the additional technical assumption $\eta_\e \sim \delta_\e \sim \e^\alpha$, for some $\alpha>1$ (see Proposition \ref{prop:subcritical:case}).    

	%
	%
	\section{Setting of the problem and preliminary results}\label{sect:setting}
		
	In this section we introduce some useful notation, define the functionals under examination, and recall some preliminaries.

	\subsection{Notation}\label{subs:notation} We start collecting the notation we are going to employ throughout. 

	\begin{enumerate}[label=(\alph*)]
		\item $m,n \geq 1$ are fixed positive integers; we set $\R^m_0:=\R^m\setminus \{0\}$;
		\item $\S^{n-1}\defas\{\nu=(\nu_1,\ldots,\nu_n)\in \R^n \colon \nu_1^2+\cdots+\nu_n^2=1\}$ and $\widehat{\S}^{n-1}_\pm\defas\{\nu \in\S^{n-1}\colon \pm\nu_{i(\nu)}>0\}$, where $i(\nu)\defas\max\{i\in\{1,\ldots,n\}\colon \nu_i\neq 0\}$;
		\item $\mathcal L^n$ and and $\mathcal H^{n-1}$ denote, respectively, the Lebesgue measure and the $(n-1)$-dimensional Hausdorff measure on $\R^n$;
		\item $\A$ denotes the collection of all open and bounded subsets of $\R^n$ with Lipschitz boundary. If $A,B \in \A$ by $A \subset \subset B$ we mean that $A$ is relatively compact in $B$;
		\item $Q$ denotes the open unit cube in $\R^n$ with sides parallel to the coordinate axis, centred at the origin; for $x\in \R^n$ and $r>0$ we set $Q_r(x):= rQ+x$. If $x=0$ we simply write $Q_r$.
		Moreover, $Q'$ denotes the open unit cube in $\R^{n-1}$ with sides parallel to the coordinate axis, centred at the origin, for every $r>0$ we set $Q_r'\defas r Q'$;
		\item\label{Rn} for every $\nu\in \Sph^{n-1}$ let $R_\nu$ denote an orthogonal $(n\x n)$-matrix such that $R_\nu e_n=\nu$; we also assume that $R_{-\nu}Q=R_\nu Q$ for every $\nu \in \S^{n-1}$, $R_\nu\in\Q^{n\x n}$ if $\nu\in\S^{n-1}\cap\Q^n$, and that the restrictions of the map $\nu\mapsto R_\nu$ to $\widehat{\Sph}_{\pm}^{n-1}$ are continuous. For an explicit example of a map $\nu \mapsto R_\nu$ satisfying all these properties we refer the reader, \textit{e.g.}, to~\cite[Example A.1]{CDMSZ19};
		\item for $x\in\R^n$, $r>0$, and $\nu\in\S^{n-1}$, we define $Q^\nu_r(x):=R_\nu Q_r(0)+x$. If $x=0$ we simply write $Q^\nu_r$ and we set $Q^\nu\defas Q_1^\nu$;
		\item for $\xi\in \R^{m \x n}$ we let $u_\xi$ be the linear function whose gradient is equal to $\xi$\ie $u_\xi(x):=\xi x$, for every $x\in \R^n$;
		\item\label{jump-fun} for $\zeta\in \R^m_0$, and $\nu \in \Sph^{n-1}$ we denote with $u_{\zeta}^{\nu}$ the piecewise constant function taking values $0,\zeta$ and jumping across the hyperplane $\Pi^\nu:=\{x\in \R^n \colon x \cdot \nu=0\}$\ie
		\begin{equation*}
			u^{\nu}_{\zeta}(x):=\begin{cases} \zeta & \text{if }\; x\cdot \nu \geq 0\,, 
				\cr
				0 & \text{if }\; x\cdot \nu < 0\,,
			\end{cases}
		\end{equation*}
		when $\zeta=e_1$ we simply write $u^\nu$ in place of $u^{\nu}_{e_1}$;
		\item\label{1dim-couple} let ${\rm u} \in C^1(\R)$, ${\rm v}\in C^1(\R)$, with $0\leq \vv \leq 1$, be one-dimensional functions satisfying the following two properties:
		\smallskip
		
		\begin{enumerate}[label= \roman*.]
			\item $\vv\uu'\equiv 0$ in $\R$; 
			
			\smallskip
			
			\item $(\uu(t),\vv(t))=(\chi_{(0,+\infty)}(t),1)$ for $|t|>1$;
		\end{enumerate}
		
		\item\label{uv-bar} for $\nu \in \Sph^{n-1}$ we set
		\begin{equation*}
			\bar u^\nu(x):= \uu (x\cdot \nu) e_1\,, \quad \bar v^\nu(x):= \vv (x\cdot \nu)\,;
		\end{equation*}
		\item\label{uv-bar-e} for $\nu\in\S^{n-1}$, $\zeta\in\R^m_0$ and $\e>0$ we set
		\begin{equation*}
			\bar u^\nu_{\zeta,\e}(x)\defas \uu \big(\tfrac{1}{\e}x\cdot\nu)\zeta\,,\quad\bar v^\nu_{\e}(x)\defas\vv\big(\tfrac{1}{\e}x\cdot\nu).
		\end{equation*}
		When $\zeta=e_1$ we simply write $\bar{u}_{\e}^\nu$ in place of $\bar{u}_{e_1,\e}^\nu$. We notice that in particular, $\bar{u}_{1}^\nu=\bar{u}^\nu$, $\bar{v}_{1}^\nu=\bar{v}^\nu$;
	\end{enumerate}

	\medskip
	
	\noindent We now introduce the functional spaces relevant for our problem. Given a $\mathcal L^n$-measurable set $A\subset \R^n$ we let $L^0(A;\R^m)$ denote the space of all Lebesgue measurable functions mapping from $A$ to $\R^m$. On $L^0(A;\R^m)$ we consider the topology induced by the convergence in measure on bounded subsets of $A$. We recall that this topology is both metrisable and separable.
	
	For $A\subset\R^n$ open we consider the functional space $SBV(A;\R^m)$ (resp.\ $GSBV(A;\R^m)$) of special functions of bounded variation (resp.\ of generalised special functions of bounded variation) on $A$. We refer the reader to the monograph \cite{AFP} for the properties of those functional spaces; here we only recall that for any $u\in SBV(A;\R^m)$ the distributional derivative $Du$ is a bounded radon measure and can be represented as 
	\begin{equation}\label{c:SBV}
		Du(B)=\int_B \nabla u \dx+\int_{B\cap S_u}[u]\otimes \nu_u \d\mathcal H^{n-1},
	\end{equation}
	for every $B \in \mathcal B^n$, where $\mathcal B^n$ is the Borel $\sigma$- algebra of $\R^n$. In \eqref{c:SBV} $\nabla u$ denotes the approximate gradient of $u$ (which makes sense also for $u\in GSBV$), $S_u$ the set of approximate discontinuity points of $u$, $[u]:=u^+-u^-$ where $u^\pm$ are the one-sided approximate limit points of $u$ at $S_u$, and $\nu_u$ is the measure theoretic normal to $S_u$. 
	
	For $p>1$ we also consider the functional spaces
	\begin{equation*}
		SBV^{p}(A;\R^m):= \{u \in SBV(A;\R^m)\colon \nabla u\in L^{p}(A;\R^{m\x n}) \text{ and } \mathcal{H}^{n-1}(S_{u})<+\infty\}\,,
	\end{equation*}
	and
	\begin{equation*}
		GSBV^{p}(A;\R^m):= \{u \in GSBV(A;\R^m)\colon \nabla u\in L^{p}(A;\R^{m\x n}) \text{ and } \mathcal{H}^{n-1}(S_{u})<+\infty\}\,.
	\end{equation*}
	We recall that $GSBV^p(A;\R^m)$ is a vector space; moreover, if $u\in GSBV^p(A;\R^m)$ then we have that $\phi(u) \in SBV^p(A;\R^m)\cap L^\infty(A;\R^m)$, for every $\phi \in C^1(\R^m;\R^m)$ with ${\rm supp} (\nabla \phi) \subset \subset \R^m$ (see \cite{DMFT}). 
	
	Eventually, we say that a function $h:\R^n\to\R^m$ is $r$-periodic for some $r>0$, if $h(x+r e_i)=h(x)$ for every $i\in\{1,\ldots,n\}$.
	
	\medskip
	
	Throughout the paper $C$ denotes a strictly positive constant which may vary from line to line and within the same expression.

	\subsection{Setting of the problem} 
	In this subsection we introduce the functionals we are going to analyse in this paper. To this end, let $f:\R^n\times\R^{m\times n}\to[0,+\infty)$ and $h:\R^n\times\R^n\to[0,+\infty)$ be Borel measurable functions satisfying, respectively, the following hypotheses:
	\begin{enumerate}[label=($f\arabic*$)]
		\item\label{hyp:growth-f}(growth conditions) there exist two constants $0<c_1\le c_2<+\infty$ such that for every $x \in \R^n$ and every $\xi \in \R^{m\x n}$
		\begin{equation*}
			c_1 |\xi|^2 \leq f(x,\xi)\leq c_2 |\xi|^2\, ;
		\end{equation*}
		\item\label{hyp:cont-xi-f} (continuity in $\xi$) there exists $0<L_1<+\infty$ such that for every $x \in \R^{n}$ we have
		\begin{equation*}
			|f(x,\xi_1)-f(x,\xi_2)| \leq L_1\big(1+|\xi_1|+|\xi_2|\big)|\xi_1-\xi_2|,
		\end{equation*}
		for every $\xi_1$, $\xi_2 \in \R^{m\x n}$;
		\item\label{hyp:per-f}(periodicity in $x$) for all $\xi\in\R^{m\times n}$, $f(\cdot,\xi)$ is $Q$-periodic;
	\end{enumerate}
	
	\medskip
	\begin{enumerate}[label=($h\arabic*$)]
		\item\label{hyp:growth-h} (growth conditions) there exist two constants $0<c_3\le c_4<+\infty$ such that for every $x \in \R^n$, and every $w \in \R^{n}$
		\begin{equation*}
			c_3|w|^2 \leq h(x,w)\leq c_4|w|^2\, ;
		\end{equation*}
		\item\label{hyp:cont-h} (continuity in $w$) there exists $0<L_2<+\infty$ such that for every $x \in \R^n$ we have
		\begin{equation*}
			|h(x,w_1)-h(x,w_2)| \leq L_2 \big(1+|w_1|+|w_2|\big)|w_1-w_2|
		\end{equation*}
		for every $w_1$, $w_2 \in \R^{n}$;
		\item\label{hyp:hom-h} (homogeneity in $w$) for all $x\in\R^n$, $h(x,\cdot)$ is homogeneous of degree two; i.e., 
		\begin{equation*}
			h(x,s w)=s^2h(x,w)
		\end{equation*}
		for all $w\in\R^n$, $s\in\R$;
		\item\label{hyp:Lip-h}(Lipschitz-continuity in $x$) there exists $0<L_3<+\infty$ such that for every $w\in\R^n$ we have
			\begin{equation*}
			|h(x_1,w)-h(x_2,w)| \leq L_3 |x_1-x_2|
		\end{equation*}
		for every $x_1$, $x_2 \in \R^{n}$;
		\item\label{hyp:per-h} (periodicity in $x$) for all $w\in\R^n$, $h(\cdot,w)$ is $Q$-periodic.
	\end{enumerate}
	In all that follows $\e>0$ varies in a family of strictly positive parameters converging to zero and $\delta_\eps>0$ is a strictly increasing function of $\e$ with $\delta_\e\searrow 0$ as $\e\searrow 0$. Set
		\begin{equation}\label{def:limit_l}
			\ell\defas\lim_{\e\to0}\frac{\e}{\delta_\e}\ \in [0,+\infty].
		\end{equation}
		Moreover, throughout the paper we let $0<\eta_\e\ll\e$.
		
		\smallskip
	For given Borel integrands $f:\R^n\times\R^{m\times n}\to [0,+\infty)$ and $h:\R^n\times\R^n\to [0,+\infty)$ as above, we introduce the functionals $\F_\e\colon L^0(\R^n;\R^m) \times L^0(\R^n) \times \A \longrightarrow [0,+\infty]$ defined by
		\begin{align}\label{F_e}
			\F_\e(u,v,A) &\defas 
			\begin{cases}
			\displaystyle\int_A(v^2+\eta_\e)f\Big(\frac{x}{\delta_\e},\nabla u\Big)\dx &+\displaystyle\int_A\left(\frac{(1-v)^2}{\e}+\e h\Big(\frac{x}{\delta_\e},\nabla v\Big)\right)\dx
			\\[2em]
			&\text{if}\ (u,v)\in W^{1,2}(A;\R^m)\x W^{1,2}(A)\, ,\ 0\le v\le1\,,\\[1em]
			+\infty &\text{otherwise in}\ \LtL\,.
			\end{cases}
		\end{align}
		It is convenient to introduce a notation for the regularised surface term in $\F_\e$, $\F_\e^s:L^0(\R^n)\times\A\longrightarrow [0,+\infty]$; we set
		\begin{align*}\label{F-surf}
		\F_\e^s(v,A)\defas
		\begin{cases}
		\displaystyle\int_A\left(\frac{(1-v)^2}{\e}+\e h\Big(\frac{x}{\delta_\e},\nabla v\Big)\right)\dx &\text{if}\ v\in W^{1,2}(A)\,,\ 0\leq v\leq 1\,,\\[2em]
		+\infty &\text{otherwise in}\ L^0(\R^n)\,.
		\end{cases}
		\end{align*}
		\begin{remark}\label{rem:continuity-functionals}
		The following observations are in order. 
		\begin{enumerate}
\item In view of hypotheses \ref{hyp:growth-f}--\ref{hyp:cont-xi-f} and~\ref{hyp:growth-h}--\ref{hyp:hom-h}, for $\eta_\e\equiv 0$ the functionals $\F_\e$ in~\eqref{F_e} belong to the class of functionals introduced and analysed in~\cite{BMZ}. Moreover, a one-dimensional variant of $\F_\e$ has been analysed by the authors in \cite{BEMZ}.

\item The assumptions on $f$ and $h$ ensure, in particular, that for every $A\in\A$ the functionals $\F_\e(\cdot,\cdot,A)$ are continuous in the strong $W^{1,2}(A;\R^m)\times W^{1,2}(A)$ topology.

\item Assumptions~\ref{hyp:growth-f} and \ref{hyp:growth-h} imply that for every $A\in\A$ and every $(u,v)\in W^{1,2}(A;\R^m)\times W^{1,2}(A)$, $0\leq v\leq 1$ there holds
	\begin{equation}\label{est:bounds-Fe}
		\begin{split}
			\min\{1\,,c_1\, , c_3\}AT_\e(u,v)
			\leq\F_\e(u,v,A)
			\leq	\max\{1\,,c_2\, , c_4\}AT_\e(u,v)\, ,
		\end{split}
	\end{equation}
	where 
	\begin{equation}\label{def:AT}
		AT_\e(u,v)\defas 	\int_A (v^2+\eta_\e)|\nabla u|^2\dx +\int_A\bigg(\frac{(1-v)^2}{\e}+\e|\nabla v|^2\bigg)\dx\,
	\end{equation}
	is the Ambrosio-Tortorelli functional \cite{AT92}. 
	\end{enumerate}
\end{remark}
	\subsection{Preliminary remarks on the Ambrosio-Tortorelli functional}
	We close this first section by recalling some results on the convergence of suitable variants of the Ambrosio-Tortorelli functional above together with some properties of the so-called optimal profile problem.
	\begin{remark}\label{rem:AT}
	By virtue of~\cite{AT92} we know that the functionals $AT_\e$, defined in~\eqref{def:AT}, $\Gamma$-converge in $\LtL$ to the Mumford-Shah functional
	\begin{equation*}
	MS(u,1)=\int_A|\nabla u|^2\dx+2\H^{n-1}(S_u\cap A)\quad u\in GSBV^2(A;\R^m)\,.
	\end{equation*} 
	Moreover, \cite[Theorem 3.1]{Foc01} states that for any $p>1$, any $a,b>0$, and any norm $\varphi:\R^n\to[0,+\infty)$ the anisotropic Ambrosio-Tortorelli functionals
	\begin{equation}\label{AT-anisotropic}
	\E_\e(u,v)\defas a\int_A (v^p+\eta_\e)|\nabla u|^p\dx+b\int_A\bigg(\frac{(1-v)^2}{\e}+\e \varphi^2(\nabla v)\bigg)\dx,
	\end{equation}
	with $(u,v)\in W^{1,p}(A;\R^m)\times W^{1,2}(A)$ and $0\leq v\leq 1$, $\Gamma$-converge in $\LtL$ to the anisotropic free-discontinuity functional
	\begin{equation}\label{MS-anisotropic}
	\E(u,1)=a\int_A|\nabla u|^p\dx+2b\int_{S_u\cap A}\varphi(\nu_u)\dHn\quad u\in GSBV^p(A;\R^m)\,.
	\end{equation}
	Although~\cite[Theorem 3.1]{Foc01} is stated in the case where in $\E_\e$ the functions $\nabla u$ and $v$ have the same summability exponent $p>1$, an inspection of the proof reveals that different exponents can be also considered (\cf also~\cite[Theorem 5.1]{ChamCris-Density}). 
	\end{remark}
	\begin{remark}\label{rem:op}
Let $\lambda>0$; arguing as in, \eg~\cite[Chapter 6]{B}, it is immediate to check that
\begin{align}
\sqrt{\lambda} &= \min\bigg\{\int_0^{+\infty}\hspace{-0.3cm}\big((1-v)^2 + \lambda\, (v')^2 \big)\dx \colon v \in W^{1,2}_{\rm loc}(0,+\infty),\; 0\leq v \leq 1,\;
v(0)=0,\; v(+\infty)=1\bigg\}\label{e:op-AT-1} 
\\[2pt]
&= \inf_{T>0}\min\bigg\{\int_0^T\hspace{-0.2cm}\big((1-v)^2 + \lambda\, (v')^2 \big)\dx \colon v \in W^{1,2}(0,T),\; 0\leq v \leq 1,\;
v(0)=0,\; v(T)=1\bigg\}\,,\label{e:op-AT-2} 
\end{align}
where $v(+\infty)\defas\lim_{t\to+\infty}v(t)$. Indeed, a solution to the minimisation problem in~\eqref{e:op-AT-1} is given by the smooth function $v_\lambda(t)=1-\exp(-t/\sqrt{\lambda})$. Then, for every $T>0$ a competitor for the minimisation problem in~\eqref{e:op-AT-2} can be obtained by linearly interpolating on $(T-1,T)$ between $v_\lambda(T-1)$ and $1$, thus approaching the value $\sqrt{\lambda}$, as $T\to+\infty$. In particular, the minimisation in~\eqref{e:op-AT-2} can be carried over all Lipschitz continuous functions $v$ satisfying the same boundary conditions.
\end{remark}


\section{Statement of the main result}\label{sect:main} In this section we state the main result of this paper, Theorem~\ref{thm:main_theorem}. The latter establishes the $\Gamma$-convergence of $\F_\e$ as $\e\to 0$ in the three regimes $\e\ll\delta_\e$, $\e\sim\delta_\e$, and $\e\gg \delta_\e$. As a corollary of Theorem~\ref{thm:main_theorem}, we then prove the convergence of some minimisation problems associated to $\F_\e$. 
	\begin{theorem}[$\Gamma$-convergence]\label{thm:main_theorem}
		Let $\F_\e$ and $\ell$ be as in~\eqref{F_e} and~\eqref{def:limit_l}, respectively. If $\ell=+\infty$, assume moreover that $\eta_\e \simeq \delta_\e \simeq\e^\alpha$, for some $\alpha >1$. Then for every $A\in\mathcal{A}$ the functionals $\F_\e(\cdot,\cdot,A)$ $\Gamma$-converge in $L^0(\R^n;\R^m)\times L^0(\R^n)$ to the homogeneous functional $\F_{\rm hom}^\ell(\cdot,\cdot,A)$, where $\F_{\rm hom}^\ell\colon L^0(\R^n;\R^m)\times L^0(\R^n)\times\mathcal{A}\longrightarrow [0,+\infty]$ is given by
		\begin{equation}\label{F_hom}
			\F_{\rm hom}^\ell(u,v,A):=
			\begin{cases}
				\displaystyle\int_Af_{\rm hom}(\nabla u)\dx+\int_{S_u\cap A}g_{\rm hom}^{\ell}(\nu_u)\dHn&\text{if}\ u\in GSBV^2(A;\R^m)\, ,\\ & v= 1\ \text{a.e. in}\ A \, ,\\
				+\infty &\text{otherwise}\,,
			\end{cases}
		\end{equation}
		with $f_{\rm hom}\colon\R^{m\times n}\to[0,+\infty)$ and $g^\ell_{\rm hom}\colon\S^{n-1}\to[0,+\infty)$ Borel functions. 
		
		Moreover, for every $\xi\in \R^{m\times n}$ there holds
		\begin{equation}\label{f_hom}
			f_{\rm hom}(\xi)\defas \lim_{r\to+\infty}\frac{1}{r^n}\inf\Biggl\{\int_{Q_r}f(x,\nabla u)\dx\colon u\in W^{1,2}(Q_r;\R^m)\,,u=u_\xi\ \text{near}\ \partial Q_r \Biggr\}. 
		\end{equation}
For every $\nu\in\S^{n-1}$ we have:
		\begin{enumerate}[label=  $(\roman*)$]
			\item if $\ell=0$, then 
			\begin{equation}\label{g_hom_supercritical}
				\begin{split}
					g^0_{\rm hom}(\nu)\defas 2 \lim_{r\to+\infty}\frac{1}{r^{n-1}}\inf\Biggl\{\int_{S_u\cap Q^\nu_r}&\sqrt{h(x,\nu_u)}\dHn\colon \\
					&u\in BV(Q_r^\nu;\{0,e_1\})\,, u=u^\nu\ \text{near}\ \partial Q^\nu_r\Biggr\}\,;
				\end{split}
			\end{equation}
			\item  if $\ell\in(0,+\infty)$, then 
			\begin{equation}\label{g_hom_critical}
				\begin{split}
					g^\ell_{\rm hom}(\nu)\defas \lim_{r\to+\infty}\frac{1}{r^{n-1}}\inf\Biggl\{\int_{Q^\nu_r}&\left((1-v)^2+h(\ell x,\nabla v)\right)\dx
					\colon \\
					& v\in W^{1,2}(Q^\nu_r)\,,0\le v\le1\colon\,\exists u\in W^{1,2}(Q_r^\nu;\R^m)\
					\text{with}\\& v\nabla u=0\ \text{a.e. in}\ Q_r^\nu \ \text{and}\
					(u,v)=(\bar u^\nu,\bar v^\nu)\ \text{near}\ \partial Q^\nu_r\Biggr\}\,;
				\end{split}
			\end{equation}
		\item if $\ell=+\infty$, then
		\begin{equation}\label{g_hom_subcritical}
				g^{\infty}_{\rm hom}(\nu)\defas 2\sqrt{h_{\rm hom}(\nu)}\,,
			\end{equation}
			where $h_\hom:\R^n\to[0,+\infty)$ is given by
			\begin{equation}\label{h_hom_subcritical}
				\begin{split}
					h_{\rm hom}(w)\defas \inf\Biggl\{\int_{Q}h(x,\nabla v+w)\dx\colon v\in W_0^{1,2}(Q)\Biggr\}\,.
				\end{split}
			\end{equation}
			\end{enumerate}
	\end{theorem}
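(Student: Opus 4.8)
The plan is to establish the theorem by deducing it from the general $\Gamma$-convergence and decoupling result of~\cite{BMZ}, and then analysing the resulting surface cell formula explicitly in each of the three regimes. Modulo the infinitesimal perturbation $\eta_\e$---which, since $\eta_\e\ll\e$, contributes only $o(1)$ to the energy along any sequence of equibounded energy, and which (as in~\cite{Foc01}) serves only to allow the recovery sequences to be built in $W^{1,2}$ rather than in $GSBV$---the functionals $\F_\e$ in~\eqref{F_e} belong to the class treated in~\cite{BMZ}. Using Remark~\ref{rem:AT} for compactness together with~\cite{BMZ}, one obtains that, for every $A\in\A$ and up to subsequences, $\F_\e(\cdot,\cdot,A)$ $\Gamma$-converges in $L^0(\R^n;\R^m)\times L^0(\R^n)$ to a functional that is finite only on pairs $(u,v)$ with $u\in GSBV^2(A;\R^m)$ and $v=1$ a.e.\ in $A$, and there equals $\int_A f_\infty(x,\nabla u)\dx+\int_{S_u\cap A}g_\infty(x,\nu_u)\dHn$. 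Moreover, the decoupling part of~\cite{BMZ} yields that $f_\infty$ is determined by the bulk integrand $f(x/\delta_\e,\cdot)$ alone, while $g_\infty$ is given by an asymptotic cell formula involving only the surface part $\F_\e^s$, minimised over pairs $(u,v)$ with $v\nabla u=0$ a.e.\ (so that the bulk term vanishes identically on admissible competitors) and boundary datum $(\bar u^\nu,\bar v^\nu)$.

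Next I would exploit the $Q$-periodicity of $f$ and $h$ together with the fact that $\delta_\e\to0$: a blow-up argument at a generic point $x_0$, combined with periodicity in the spirit of~\cite{BDV,CDMSZ19}, shows that $f_\infty$ and $g_\infty$ are independent of $x$ (this is Theorem~\ref{thm:gamma_limit}). Since the minimisation defining $f_\infty$ only involves the $W^{1,2}$-functional with oscillating integrand $f(x/\delta_\e,\cdot)$, classical periodic homogenisation~\cite{Braides85,Mueller87} identifies $f_\infty$ with $f_\hom$ as in~\eqref{f_hom} (Proposition~\ref{prop:volume_density}). Thus the $\Gamma$-limit has the form~\eqref{F_hom}, and the independence of the subsequence follows once $g_\hom^\ell:=g_\infty$ is computed explicitly in terms of $\ell$.

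For the surface density one proceeds regime by regime. If $\ell=0$ (that is, $\e\ll\delta_\e$), the idea is to bound the surface term from below, uniformly in $\e$, by the $\e$-independent heterogeneous anisotropic perimeter $\int_{\partial^*E\cap A}\sqrt{h(x/\delta_\e,\nu_E)}\dHn$ of the superlevel sets $E$ of $v$: this combines the Modica--Mortola inequality $(1-v)^2/\e+\e\,h(x/\delta_\e,\nabla v)\ge 2\sqrt{h(x/\delta_\e,\nabla\Phi(v))}$, for a suitable primitive $\Phi$ of $1-v$, with a co-area slicing argument of Ambrosio; one is then reduced to a perimeter homogenisation problem in the spirit of~\cite{BDV}, whose homogenised density is exactly~\eqref{g_hom_supercritical}, the matching upper bound being produced from near-optimal perimeter configurations regularised through the one-dimensional optimal profile of Remark~\ref{rem:op} (Proposition~\ref{prop:supercritical_case}). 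If $\ell\in(0,+\infty)$ (that is, $\e\sim\delta_\e$), rescaling the cube $Q_r^\nu$ by $\e$ and using the degree-two homogeneity of $h$ turns $\F_\e^s$ into $\int((1-v)^2+h((\e/\delta_\e)x,\nabla v))\dx$ on a cube of side $\sim r/\e$; letting $\e\to0$, so that $\e/\delta_\e\to\ell$ (the Lipschitz continuity of $h$ in its first variable allowing one to freeze $h$ at $\ell x$), and then $r\to+\infty$, gives formula~\eqref{g_hom_critical} (Proposition~\ref{prop:critical_case}). If $\ell=+\infty$ (that is, $\e\gg\delta_\e$), one first homogenises $h(x/\delta_\e,\cdot)$ into $h_\hom$ of~\eqref{h_hom_subcritical}, reducing $\F_\e$ to the spatially homogeneous functional~\eqref{intro:super}, and then applies the one-dimensional optimal profile computation of Remark~\ref{rem:op} (as in~\cite{Foc01}, with $\lambda=h_\hom(\nu)$) to obtain $g_\hom^\infty(\nu)=2\sqrt{h_\hom(\nu)}$ (Proposition~\ref{prop:subcritical:case}).

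The expected main obstacle is the regime $\ell=+\infty$: the reduction of $\F_\e$ to~\eqref{intro:super} must be performed at the level of an arbitrary sequence $(u_\e,v_\e)$ with equibounded energy. Here one replaces $v_\e$ by a sequence $\tilde v_\e$ obtained by averaging on cubes of side $\delta_\e$ and inserting locally the $h_\hom$-optimal corrector, so as to gain the lower bound $\F_\e^s(v_\e,A)\ge\frac1\e\int_A(1-\tilde v_\e)^2\dx+\e\int_A h_\hom(\nabla\tilde v_\e)\dx+o(1)$ as in~\eqref{intro:es-super}. For this substitution not to blow up the associated bulk term $\int_A(\tilde v_\e^2+\eta_\e)f_\hom(\nabla u_\e)\dx$, one needs the a priori estimate $\int_A|\nabla u_\e|^2\dx\lesssim 1/\delta_\e$ (see~\eqref{intro:rate}), which is exactly what the assumption $\eta_\e\simeq\delta_\e\simeq\e^\alpha$ with $\alpha>1$ enforces through the energy bound $\eta_\e\int_A|\nabla u_\e|^2\dx\le C$. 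Reconciling the averaging of $v_\e$ with the possible concentration of $\nabla u_\e$, while simultaneously keeping both~\eqref{intro:es-super} and the equiboundedness of the bulk term, is the technical heart of the argument and mirrors the corresponding difficulty in the Modica--Mortola homogenisation of~\cite{ABC01}.
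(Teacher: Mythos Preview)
Your plan is correct and matches the paper's strategy in every regime: abstract $\Gamma$-compactness and integral representation from~\cite{BMZ}, identification of the bulk integrand via classical periodic homogenisation, and then the Modica--Mortola/coarea argument for $\ell=0$, a rescaling for $\ell\in(0,+\infty)$, and the averaging construction of~\cite{ABC01} for $\ell=+\infty$.

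Two points where the paper is more careful than your sketch. First, your dismissal of $\eta_\e$ as ``contributing only $o(1)$ along equibounded sequences'' is not literally true (one only gets $\eta_\e\int|\nabla u_\e|^2\le C$, not $o(1)$), and, more importantly, the paper flags (Remark~\ref{rem:f-g-tilde}) that the argument in~\cite{BMZ} showing the surface integrand is independent of the jump opening $[u]$ does \emph{not} transfer directly to $\eta_\e>0$. The paper therefore runs the $\eta_\e>0$ and $\eta_\e=0$ problems in parallel, obtaining an a~priori $\zeta$-dependent density ${g}^\ell(\zeta,\nu)$ from the former and $\hat g^\ell(\nu)$ from the latter, and then closes the loop regime by regime via the chain $\hat g^\ell(\nu)\ge g_\hom^\ell(\nu)\ge{g}^\ell(\zeta,\nu)\ge\hat g^\ell(\nu)$ (for $\ell=+\infty$ one shows ${g}^\infty(\zeta,\nu)=g_\hom^\infty(\nu)$ directly). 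Second, for $\ell\in(0,+\infty)$ the paper rescales by $\ell\delta_\e$ rather than by $\e$, so that $h(\ell x,\cdot)$ appears exactly and the residual factor is handled by the elementary bound $\min\{\ell\delta_\e/\e,\e/(\ell\delta_\e)\}\to 1$; this avoids invoking the Lipschitz hypothesis~\ref{hyp:Lip-h}, which the paper uses only for $\ell=0$.
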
	

	\begin{remark}[Properties of $f_\hom$]\label{rem:prop-f}
	The homogenised bulk integrand $f_\hom$ in~\eqref{f_hom} coincides with the bulk integrand obtained in~\cite{Braides85, Mueller87, BDV}. In particular, the limit in~\eqref{f_hom} exists and $f_\hom$ is quasiconvex.  Moreover, $f_\hom$ can be rewritten as
	\begin{equation}\label{f-hom-equiv}
	f_\hom(\xi)=\inf_{r\in \mathbb N^*}\frac{1}{r^n}\inf\Biggl\{\int_{Q_r}f(x,\nabla u+\xi)\dx\colon u\in W^{1,2}(Q_r;\R^m)\,,u=0\ \text{near}\ \partial Q_r \Biggr\}\,.
	\end{equation}
	\end{remark}
	\begin{remark}[Properties of $g_\hom^\ell$]\label{rem:prop-g}
	Some observations on the surface integrand $g_\hom^\ell$ are in order.
	\begin{enumerate}
	\item\label{rem:g-0} In the regime $\ell=0$ the surface integrand $g_\hom^0$ defined in~\eqref{g_hom_supercritical} coincides with the one obtained in the homogenisation of functionals defined on finite partitions~\cite{AB90-2}. More precisely,~\cite[Proposition 4.4]{AB90-2} ensures that $g_\hom^0(\nu)$ is well defined for every $\nu\in\S^{n-1}$. In addition,~\cite[Theorem 4.2 and Example 2.8]{AB90-2} ensure that the $1$-homogeneous extension of $g_\hom^0$ to $\R^n$ is convex, and therefore continuous. Moreover, in view of~\cite[Theorem 3.1 and Example 2.8]{AB90-2} the value of $g_\hom^0(\nu)$ remains unchanged if the surface integrand $\sqrt{h}$ in~\eqref{g_hom_supercritical} is replaced by its convex envelope with respect to the second variable. 
	\item\label{rem:g-ell} For $\ell=1$, the existence of the limit defining $g_\hom^\ell(\nu)$ as well as the continuity of $g_\hom^\ell$ restricted to $\widehat{\S}_{\pm}^{n-1}$ is established in~\cite[Proposition 8.7]{BMZ} in a more general setting. The arguments as above can be used to show that the same continuity properties hold true for any $\ell\in(0,+\infty)$. 
	\item\label{rem:g-infty} In the regime $\ell=+\infty$ the function $g_\hom^\infty$ is easily seen to be continuous. 
Indeed it is known that the function $h_\hom$ satisfies the growth condition~\ref{hyp:growth-h} and the local Lipschitz condition~\ref{hyp:cont-h}, albeit with a different constant $L_2'$ (\cf \cite[Lemma 2.1]{Mueller87}). Moreover, $h_\hom$ is convex. 
	\end{enumerate}
	\end{remark}
	
The proof of Theorem~\ref{thm:main_theorem} will be divided into a number of intermediate steps and will be carried out in Sections~\ref{sec:gamma-limit}--\ref{sec:finer}. Specifically, in Section~\ref{sec:gamma-limit} we show that there exists a subsequence $(\e_k)$ such that for every $A\in\A$ the corresponding functionals $\F_{\e_k}(\cdot,\cdot,A)$ $\Gamma$-converge to a free-discontinuity functional which is finite on $GSBV^2(A;\R^m)\times\{1\}$ and of the form
	\begin{equation*}
	\int_A{f}^\ell(\nabla u)\dx+\int_{S_u\cap A}{g}^\ell([u],\nu_u)\dHn\,.
	\end{equation*}	
	At this stage the integrands ${f}^\ell$ and ${g}^\ell$ may a priori depend on the subsequence $(\e_k)$. 
	The procedure followed to prove such a compactness and integral representation result is by now classical, moreover the corresponding result for $\eta_\e\equiv 0$ can be found in~\cite[Theorem 5.2]{BMZ}. For these reasons we will only sketch this proof here (see Theorem \ref{thm:gamma_limit}). Then,  in view of~\cite[Theorem 5.2 and Theorem 3.1]{BMZ} we show that the volume integrand ${f}^\ell$ coincides with $f_\hom$ given by~\eqref{f_hom}, and therefore it is independent of $\ell$ and $(\e_k)$. Eventually, in Sections~\ref{sec:larger}, \ref{sec:same}, and~\ref{sec:finer} we characterise ${g}^\ell$ in the three regimes $\ell=0$, $\ell\in(0,+\infty)$, and $\ell=+\infty$, respectively. Namely, we show that ${g}^\ell=g_\hom^\ell$, where we notice that the latter does not depend on the subsequence $(\e_k)$. Consequently, Theorem~\ref{thm:main_theorem} follows by the Urysohn property of $\Gamma$-convergence~\cite[Proposition 8.3]{DM-book}.  
	
	\medskip
	
	To conclude, we also observe that hypothesis~\ref{hyp:Lip-h} will be used only in the proof of Proposition \ref{prop:supercritical_case}. In particular, for $\ell\in (0,+\infty]$ Theorem~\ref{thm:main_theorem} holds true without assuming any continuity of $h$ in $x$.	
	
	\medskip
	
	On account of Theorem~\ref{thm:main_theorem} we can prove the following convergence result for a class of minimisation problems associated to $\F_\e$.
	\begin{corollary}\label{cor:conv-min}
	Assume the hypotheses of Theorem~\ref{thm:main_theorem} are satisfied and assume in addition that for a.e.~$x\in\R^n$ the functions $f(x,\cdot)$ and $h(x,\cdot)$ are quasiconvex and convex, respectively. Let $A\in\A$, $q\geq 1$, and $g\in L^q(A;\R^m)$. Then
\begin{itemize}	
	\item for any $\e>0$ there exists a solution $(\bar{u}_\e,\bar{v}_\e)\in W^{1,2}(A;\R^m)\times W^{1,2}(A)$ to the minimisation  problem
	\begin{equation}\label{def:Mk}
	M_\e\defas\min\bigg\{\F_\e(u,v,A)+\int_A|u-g|^q\dx\colon (u,v)\in \LtL\bigg\}\,;
	\end{equation}
	\item $\bar{v}_\e\to 1$ in $L^2(A)$, as $\e\to 0$; 
	\item up to subsequences, $(\bar{u}_\e)$ converges in $L^q(A;\R^m)$ to a solution of
	\begin{equation}\label{def:M}
	M^\ell\defas\min\bigg\{\F_\hom^{\ell}(u,1,A)+\int_A|u-g|^q\dx\colon u\in GSBV^2(A;\R^m)\cap L^q(A;\R^m)\bigg\}\,;
	\end{equation}
	\item $M_\e\to M^\ell$, as $\e\to 0$.
	\end{itemize}
	\end{corollary}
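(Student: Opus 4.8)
The proof of Corollary~\ref{cor:conv-min} follows the standard scheme for passing to the limit in minimisation problems via $\Gamma$-convergence, combined with the compactness and coercivity properties of the functionals $\F_\e$. First I would establish existence of minimisers for $M_\e$: for fixed $\e>0$ the functional $(u,v)\mapsto\F_\e(u,v,A)+\int_A|u-g|^q\dx$ is lower semicontinuous with respect to weak $W^{1,2}(A;\R^m)\times W^{1,2}(A)$ convergence. Indeed, under the additional quasiconvexity of $f(x,\cdot)$ and convexity of $h(x,\cdot)$ the bulk and surface integrals are weakly lower semicontinuous by standard results, while the $(1-v)^2/\e$ term is continuous under strong (hence along subsequences weak) $L^2$ convergence; coercivity of $AT_\e$ from~\eqref{est:bounds-Fe}--\eqref{def:AT} together with the fixed lower bound $v^2+\eta_\e\geq\eta_\e>0$ gives an a priori $W^{1,2}$ bound on minimising sequences, so the direct method applies. (Alternatively one may invoke relaxation, but the convexity hypotheses make this direct argument cleaner.)

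Next I would derive the uniform energy bound. Testing $M_\e$ with the pair $(u,v)=(0,1)$, which lies in $W^{1,2}(A;\R^m)\times W^{1,2}(A)$ with $0\leq v\leq 1$, gives $M_\e\leq\F_\e(0,1,A)+\int_A|g|^q\dx=\int_A|g|^q\dx<+\infty$, since $\F_\e(0,1,A)=0$. Hence $\F_\e(\bar u_\e,\bar v_\e,A)+\int_A|\bar u_\e-g|^q\dx\leq C$ uniformly in $\e$. From~\eqref{est:bounds-Fe} this yields $AT_\e(\bar u_\e,\bar v_\e)\leq C$; in particular $\int_A(1-\bar v_\e)^2\dx\leq C\e\to 0$, which already proves $\bar v_\e\to 1$ in $L^2(A)$ (using $0\leq\bar v_\e\leq 1$). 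It also gives the standard compactness in $L^0$ for the pair $(\bar u_\e,\bar v_\e)$: by the Ambrosio–Tortorelli compactness theorem (\cf Remark~\ref{rem:AT}), any sequence with $\sup_\e AT_\e(u_\e,v_\e)<+\infty$ is, up to a subsequence and a truncation/diagonal argument, precompact in $L^0(\R^n;\R^m)\times L^0(\R^n)$ with limit of the form $(u,1)$, $u\in GSBV^2(A;\R^m)$. The extra term $\int_A|\bar u_\e-g|^q\dx\leq C$ upgrades this to $L^q$-precompactness of $(\bar u_\e)$ (a truncation argument combined with the $GSBV$ compactness and the equi-integrability bound coming from the $q$-th power control), so along a subsequence $\bar u_\e\to u_0$ in $L^q(A;\R^m)$ with $u_0\in GSBV^2(A;\R^m)\cap L^q(A;\R^m)$.

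Then I would run the standard $\Gamma$-convergence argument for minimisers. Since $\F_\e(\cdot,\cdot,A)$ $\Gamma$-converges to $\F_\hom^\ell(\cdot,\cdot,A)$ in $L^0(\R^n;\R^m)\times L^0(\R^n)$ by Theorem~\ref{thm:main_theorem}, and since $(u,v)\mapsto\int_A|u-g|^q\dx$ is continuous along the relevant convergence (the $L^q$ convergence of $\bar u_\e\to u_0$ plus lower semicontinuity of the $L^q$-distance in $L^0$ in general, noting it does not depend on $v$), the perturbed functionals $\F_\e(\cdot,\cdot,A)+\int_A|\cdot-g|^q\dx$ $\Gamma$-converge to $\F_\hom^\ell(\cdot,\cdot,A)+\int_A|\cdot-g|^q\dx$. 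The liminf inequality applied along the converging subsequence gives
\begin{equation*}
\F_\hom^\ell(u_0,1,A)+\int_A|u_0-g|^q\dx\leq\liminf_{\e\to 0}M_\e,
\end{equation*}
so in particular $u_0$ is admissible for $M^\ell$ and $M^\ell\leq\liminf_\e M_\e$. For the reverse inequality, pick any competitor $u\in GSBV^2(A;\R^m)\cap L^q(A;\R^m)$ for $M^\ell$ and a recovery sequence $(u_\e,v_\e)\to(u,1)$ in $L^0$ with $\F_\e(u_\e,v_\e,A)\to\F_\hom^\ell(u,1,A)$; a standard truncation of the recovery sequence (replacing $u_\e$ by $u_\e$ composed with a smooth truncation at level $\simeq\|g\|_\infty$-type cut, or directly arguing that recovery sequences can be taken $L^q$-converging — cf.\ the arguments in~\cite{BMZ}) ensures $u_\e\to u$ also in $L^q(A;\R^m)$, whence $\limsup_\e M_\e\leq\lim_\e\big(\F_\e(u_\e,v_\e,A)+\int_A|u_\e-g|^q\dx\big)=\F_\hom^\ell(u,1,A)+\int_A|u-g|^q\dx$; taking the infimum over $u$ gives $\limsup_\e M_\e\leq M^\ell$. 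Combining the two inequalities yields $M_\e\to M^\ell$, and since the converging subsequence limit $u_0$ attains $M^\ell$, it is a solution of~\eqref{def:M}; as this holds for every subsequence, the whole family $(\bar u_\e)$ converges in $L^q$ to the (possibly non-unique) set of minimisers in the usual sense.

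The main obstacle is the $L^q$-convergence of the recovery sequences and of the minimising sequences: $\Gamma$-convergence is stated in the $L^0$ topology, so one must upgrade convergence in measure to $L^q$ convergence to handle the fidelity term $\int_A|u-g|^q\dx$. For the minimising sequences this is handled by the uniform bound $\int_A|\bar u_\e-g|^q\dx\leq C$ together with a truncation argument exploiting that $GSBV^2$ compactness combined with equi-$q$-integrability prevents loss of mass; for the recovery sequences one uses that recovery sequences produced in the construction of $\F_\hom^\ell$ (in Sections~\ref{sec:larger}--\ref{sec:finer} and in~\cite{BMZ}) can be taken bounded in $L^\infty$ on the region where $u$ is bounded and can be suitably truncated, so that convergence in measure plus a uniform $L^q$-bound upgrades to $L^q$ convergence by Vitali's theorem. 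Everything else is a routine application of the fundamental theorem of $\Gamma$-convergence together with the coercivity estimate~\eqref{est:bounds-Fe}.
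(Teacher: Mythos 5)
Your overall architecture is the same as the paper's: direct method for existence (where the added quasiconvexity of $f$ and convexity of $h$ are needed precisely so that the direct method applies without relaxation), the trivial energy bound by testing with $(0,1)$, Ambrosio--Tortorelli compactness for the $L^0$-convergence of the $u$-variable (the paper cites~\cite[Lemma~4.1]{Foc01}), and the fundamental theorem of $\Gamma$-convergence for the convergence of minima and minimisers (the paper cites~\cite[Theorem~7.1]{DMI}). So far so good.

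There is, however, a genuine gap in your mechanism for the $L^q$-convergence of $(\bar u_\e)$. You assert that the uniform bound $\int_A|\bar u_\e-g|^q\dx\leq C$ together with convergence in measure ``upgrades'' to $L^q$-precompactness by ``equi-integrability coming from the $q$-th power control''. This is false: a bound in $L^q$ does \emph{not} imply equi-integrability of the $q$-th powers (take, e.g., $u_\e=\e^{-1/q}\chi_{(0,\e)}$; its $L^q$-norm is constant but the $q$-th powers concentrate). Consequently you cannot extract strong $L^q$-convergence at that stage, and it is not available as an ingredient for the subsequent $\Gamma$-convergence argument. The correct order of operations, which is the one followed in~\cite[Theorem~7.1]{DMI}, is the opposite of yours: first prove $M_\e\to M^\ell$ and that the $L^0$-limit $u_0$ is a minimiser, using only Fatou (lower semicontinuity of $u\mapsto\int_A|u-g|^q\dx$ under convergence in measure) for the liminf inequality and a truncation of the recovery sequence for the limsup inequality. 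Once $M_\e\to M^\ell=\F_\hom^\ell(u_0,1,A)+\int_A|u_0-g|^q\dx$, the Fatou inequality and the $\Gamma$-liminf inequality for $\F_\e$ must both be equalities along the subsequence, so in particular $\int_A|\bar u_\e-g|^q\dx\to\int_A|u_0-g|^q\dx$; combined with convergence in measure, this yields $\bar u_\e\to u_0$ in $L^q(A;\R^m)$ via a Brezis--Lieb/Scheff\'e-type argument. So the $L^q$-convergence is a \emph{consequence} of $M_\e\to M^\ell$, not a prerequisite. With this reordering (and dropping the erroneous equi-integrability claim) your proof is sound and matches the paper's cited scheme.
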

	\begin{proof}
	For fixed $\e>0$ the existence of  a minimizing pair $(\bar{u}_\e,\bar{v}_\e)\in W^{1,2}(A;\R^m)\times W^{1,2}(A)$ for \eqref{def:Mk} follows by a straightforward application of the direct method of the calculus of variations. 
	
	The convergence $\bar v_\e\to 1$ in $L^2(A)$ readily follows by the definition of $\F_\e$. Moreover, by \cite[Lemma~4.1]{Foc01} up to subsequences (not relabelled) $\bar u_{\e}\to\bar u^\ell$ in $L^0(A;\R^m)$, for some $\bar u^\ell\in GSBV^2(A;\R^m)$. Eventually arguing as in \cite[Theorem~7.1]{DMI} we deduce that $M_\e\to M^\ell$, $\bar u_\e\to \bar u^\ell$ in $L^q(A;\R^m)$, and that $\bar u^\ell$ is a solution to \eqref{def:M}.
	\end{proof}

	\section{An abstract $\Gamma$-convergence result}\label{sec:gamma-limit}
	In this section we prove an abstract $\Gamma$-convergence result for the functionals $\F_\e$. We notice that if in~\eqref{F_e} we choose $\eta_\e\equiv 0$ then the functionals $\F_\e$ are a special instance of those considered in~\cite{BMZ} for which a $\Gamma$-convergence and integral representation result was established (cf. \cite[Theorem 5.2]{BMZ}). Since in this case we would like to allow for the presence of the infinitesimal sequence $\eta_\e$, with $0<\eta_\e\ll\e$, we need to show that the analogue of~\cite[Theorem 5.2]{BMZ} holds true in this case as well. The proof of this result will be very close to that of~\cite[Theorem 5.2]{BMZ}, for this reason we will only sketch it here, referring to~\cite{BMZ} for the details.    
	
	\begin{remark}\label{rem:fund-est}
	A crucial step in the proof of the $\Gamma$-convergence result below is to show that the functionals $\F_\e$ in~\eqref{F_e} satisfy a so-called fundamental estimate, uniformly in $\e$. We observe that such an estimate easily follows arguing as in~\cite[Proposition 5.1]{BMZ}. Indeed, thanks to~\ref{hyp:growth-f}, for every $A\in\A$ and every $u\in W^{1,2}(A;\R^m)$ the term $\eta_\e\int_Af\big(\frac{x}{\delta_\e},\nabla u\big)\dx$ can be bounded (up to a multiplicative constant) from above and from below by the convex term $\eta_\e \int_A|\nabla u|^2\dx$ to which the construction in~\cite[Proposition 5.1]{BMZ} directly applies.  
	\end{remark}
	\begin{theorem}\label{thm:gamma_limit}
		Let $\F_\e$ be as in~\eqref{F_e}; then there exists a subsequence $(\e_k)$ such that for every $A\in\A$ the functionals $\F_{\e_k}(\cdot,\cdot,A)$ $\Gamma$-converge in $L^0(\R^n;\R^m)\times L^0(\R^n)$ to $\F^\ell(\cdot,\cdot,A)$, where $\F^\ell\colon L^0(\R^n;\R^m)\times L^0(\R^n)\times\A\longrightarrow [0,+\infty]$ is given by
	\begin{equation}\label{tilde-F}
	\F^\ell(u, v, A)\defas
		\begin{cases}
			\displaystyle\int_A f^\ell(\nabla u)\dx+\int_{S_u \cap A} g^\ell([u],\nu_u)\dHn &\text{if}\ u \in GSBV^2(A;\R^m)\, ,\\
			& v= 1\, \text{a.e.\ in } A\, ,\\[4pt]
			+\infty &\text{otherwise}\,,
		\end{cases}
	\end{equation}
	 for some Borel functions $f^\ell \colon \R^{m \times n} \to [0,+\infty)$ and $g^\ell\colon\R^m\times  \Sph^{n-1} \to [0,+\infty)$.
\end{theorem}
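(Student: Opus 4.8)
The plan is to follow the now-standard localization/integral-representation machinery, treating $\F_\e$ as a functional on (function, set) pairs and extracting a $\Gamma$-limit whose domain and structure are identified via abstract tools. First I would invoke the compactness of $\Gamma$-convergence for increasing set-functions: since $L^0(\R^n;\R^m)\times L^0(\R^n)$ is separable and metrizable, there exists a subsequence $(\e_k)$ such that the $\Gamma$-limit $\F^\ell(\cdot,\cdot,A)\defas\Gamma\text{-}\lim_k \F_{\e_k}(\cdot,\cdot,A)$ exists for every $A$ in a countable dense subfamily $\mathcal{D}\subset\A$ (e.g.\ finite unions of open cubes with rational vertices). The fundamental estimate for $\F_\e$, which holds uniformly in $\e$ by Remark~\ref{rem:fund-est} (the added term $\eta_\e\int_A f(x/\delta_\e,\nabla u)\dx$ being controlled above and below by the convex $\eta_\e\int_A|\nabla u|^2\dx$, to which the construction of \cite[Proposition~5.1]{BMZ} applies verbatim), passes to the $\Gamma$-limit and yields that $\F^\ell(u,v,\cdot)$ is the trace on $\A$ of a Borel measure; combined with inner regularity this lets us extend the definition of $\F^\ell(u,v,A)$ to all $A\in\A$ and shows the $\Gamma$-convergence holds for every $A\in\A$, not merely on $\mathcal{D}$.

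Next I would identify the domain. The lower bound in~\eqref{est:bounds-Fe} gives $\F_\e\geq \min\{1,c_1,c_3\}\,AT_\e$, and since $AT_\e$ $\Gamma$-converges to (a multiple of) $MS$ by Remark~\ref{rem:AT}, any sequence $(u_{\e_k},v_{\e_k})$ with equibounded energy is precompact in $L^0$ with limit of the form $(u,1)$, $u\in GSBV^2(A;\R^m)$; conversely the upper bound in~\eqref{est:bounds-Fe} shows $\F^\ell(u,v,A)<+\infty$ forces $v=1$ a.e.\ and $u\in GSBV^2(A;\R^m)$. Hence the domain of $\F^\ell(\cdot,\cdot,A)$ is exactly $GSBV^2(A;\R^m)\times\{1\}$, and on this domain $\F^\ell(u,1,A)$ defines, as a function of $A$, a measure absolutely continuous with respect to $\mathcal L^n\mres A + \mathcal H^{n-1}\mres(S_u\cap A)$, with the upper bound $\F^\ell(u,1,A)\leq C(\mathcal L^n(A)+|\nabla u|^2\,\mathcal L^n(A)+\mathcal H^{n-1}(S_u\cap A))$ and the lower bound $\F^\ell(u,1,A)\geq c\int_A|\nabla u|^2\dx + c\,\mathcal H^{n-1}(S_u\cap A)$ following from~\eqref{est:bounds-Fe} together with the corresponding properties of the $\Gamma$-limit of $AT_\e$.

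With domain and measure property in hand, the integral representation theorem for free-discontinuity functionals (in the form used in \cite[Theorem~5.2]{BMZ}, ultimately relying on the blow-up method and the global method of \cite{BFM, BFLM}) applies: it produces Borel integrands $f^\ell\colon\R^{m\times n}\to[0,+\infty)$ and $g^\ell\colon\R^m\times\S^{n-1}\to[0,+\infty)$ such that
\begin{equation*}
\F^\ell(u,1,A)=\int_A f^\ell(\nabla u)\dx+\int_{S_u\cap A}g^\ell([u],\nu_u)\dHn
\end{equation*}
for every $u\in GSBV^2(A;\R^m)$ and every $A\in\A$. Here one uses that, by construction, $f^\ell$ and $g^\ell$ depend only on $\nabla u$ and on $([u],\nu_u)$ respectively — i.e.\ the bulk density does not depend on $x$ and the surface density does not depend on $x$ — which is forced by the fact that the blow-up limits of $\F^\ell$ at $\mathcal L^n$-a.e.\ point and at $\mathcal H^{n-1}$-a.e.\ jump point are translation invariant; this is where the homogenized nature of the limit (as opposed to the $x$-dependent integrands of~\cite{CDMSZ19, BMZ} in their full generality) enters, and it relies on a De Giorgi–Letta / mean-value argument exploiting $Q$-periodicity of $f$ and $h$ in the first variable together with $\delta_\e\to0$. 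The main obstacle, and the only point that is genuinely not a verbatim copy of \cite[Theorem~5.2]{BMZ}, is checking that the presence of the perturbation $\eta_\e$ does not spoil the fundamental estimate, the compactness, nor the measure property — but as noted this is handled because $0<\eta_\e\ll\e$ makes $\eta_\e\int_A f(x/\delta_\e,\nabla u)\dx$ a lower-order, convex, well-behaved term; consequently I would only sketch the argument and refer to \cite{BMZ} for the routine details. Finally, passing to a further subsequence is unnecessary at this abstract stage: the dependence of $f^\ell,g^\ell$ on $(\e_k)$ will be removed later in Sections~\ref{sec:gamma-limit}--\ref{sec:finer} by identifying them with the explicit $\hom$-formulas, after which the Urysohn property of $\Gamma$-convergence \cite[Proposition~8.3]{DM-book} upgrades the convergence to the full family $\e\to0$.
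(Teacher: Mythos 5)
Your proposal follows the same route as the paper's proof: verify the fundamental estimate (Remark~\ref{rem:fund-est}, reducing the $\eta_\e$-term to the convex model term), apply the localization and integral-representation machinery of~\cite[Theorem~5.2]{BMZ} and~\cite{BFLM} to get $x$-dependent integrands along a subsequence, pin down the domain via the two-sided comparison with $AT_\e$ in~\eqref{est:bounds-Fe} and Remark~\ref{rem:AT}, and remove the $x$-dependence of the integrands through the translation invariance of $\F^\ell$ coming from the $Q$-periodicity of $f,h$ together with $\delta_\e\to0$ (this is the argument of~\cite[Lemma~3.7(ii)]{BDV} the paper invokes). One small attribution slip: the implication ``$\F^\ell(u,v,A)<+\infty$ forces $v=1$ a.e.\ and $u\in GSBV^2(A;\R^m)$'' follows from the \emph{lower} bound in~\eqref{est:bounds-Fe} combined with the compactness/coercivity of $AT_\e$, not from the upper bound, which instead guarantees finiteness of $\F^\ell$ on $GSBV^2\times\{1\}$; this does not affect the argument.
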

\begin{proof}
	Since $0<\eta_\e\ll\e$, thanks to~\eqref{est:bounds-Fe} and Remark~\ref{rem:AT} we deduce the existence of a constant $C>0$ such that
	\begin{equation*}
		\begin{split}
			\frac{1}{C}\Big(\int_A|\nabla u|^2\dx+\H^{n-1}(S_u\cap A)\Big) &\leq{(\F^\ell)}'(u,1,A)\\
			&\leq(\F^\ell)''(u,1,A)\leq C\Big(\int_A|\nabla u|^2\dx+\H^{n-1}(S_u\cap A)\Big)\,,
		\end{split}
	\end{equation*}
	for every $A\in\A$ and $u\in GSBV^2(A;\R^m)$, where  ${(\F^\ell)}'(\cdot,\cdot,A)$ and ${(\F^\ell)}''(\cdot,\cdot,A)$ denote the $\Gamma$-liminf and $\Gamma$-limsup of $\F_\e(\cdot,\cdot,A)$, respectively.
	Moreover thanks to Remark~\ref{rem:fund-est} the functionals $\F_\e$ satisfy the fundamental estimate~\cite[Proposition 5.1]{BMZ}. Thus, arguing as in~\cite[Theorem 5.2]{BMZ} we can apply the localisation method of $\Gamma$-convergence (see \eg~\cite[Chapters 14--18]{DM-book}) together with the integral-representation result~\cite[Theorem 1]{BFLM} to deduce the existence of a subsequence $(\F_{\e_k})$ and a functional $\F^\ell:\LtL\times\A\longrightarrow [0,+\infty]$ with the following properties: For every $A\in\A$ the functionals $\F_{\e_k}(\cdot,\cdot,A)$ $\Gamma$-converge in $\LtL$ to $\F^\ell(\cdot,\cdot, A)$ and for every $u\in GSBV^2(A;\R^m)$ there holds
	\begin{equation*}
	\F^\ell(u,1,A)=\int_{A}f^\ell(x,\nabla u)\dx+\int_{S_u\cap A} g^\ell(x,[u],\nu_u)\dHn\,,
	\end{equation*}
	for some Borel functions $f^\ell:\R^n\times\R^{m\times n}\to [0,+\infty)$, $g^\ell:\R^n\times\R_0^m\times\R^n\to [0,+\infty)$, while $\F^\ell(\cdot,\cdot,A)=+\infty$ if either $u\not\in GSBV^2(A;\R^m)$ or $v\neq 1$. Eventually, thanks to~\ref{hyp:per-f} and~\ref{hyp:per-h} a well-known argument (see, \eg \cite[Lemma 3.7 (ii)]{BDV}) shows that $\F^\ell$ is invariant under translation in $x$. This in turn implies that $f^\ell$ and $g^\ell$ are independent of $x$, hence the claim follows.
\end{proof}
\begin{remark}\label{rem:prop-g-tilde}
By the general properties of $\Gamma$-convergence, for every $A\in\A$ the functional $\F^\ell(\cdot,1,A)$ in~\eqref{tilde-F} is lower semicontinuous with respect to the convergence in measure. In particular, the functional $u\mapsto\int_{S_u\cap A}{g}^\ell([u],\nu_u)\dHn$ is lower semicontinuous on finite partitions. As a consequence (see~\cite{AB90-2}), we deduce that for every $\nu\in\S^{n-1}$ the function ${g}^\ell(\cdot,\nu)$ is subadditive, while for every $\zeta\in\R_0^m$ the $1$-homogeneous extension of ${g}^\ell(\zeta,\cdot)$ is convex. In particular, ${g}^\ell(\zeta,\cdot)$ is continuous.
\end{remark}
By virtue of Theorem~\ref{thm:gamma_limit}, to complete the proof of Theorem~\ref{thm:main_theorem} we need to characterise the integrands ${f}^\ell$ and ${g}^\ell$ in~\eqref{tilde-F}, for every $\ell \in [0,+\infty]$. To this end, we preliminarily compare them to the bulk and surface integrands obtained by applying~\cite[Theorem 3.1]{BMZ} to $\F_\e$ when $\eta_\e\equiv 0$. We start recalling some of the notation employed in~\cite{BMZ}.

\medskip
For $\e>0$, $\rho>2\eps$, $\xi\in\R^{m\times n}$, and $\nu\in\S^{n-1}$ we define the two following minimisation problems
	\begin{equation*}\label{eq:m^b}
		\m^{b}_\e(u_\xi,Q_\rho)\defas\inf\bigg\{\int_{Q_\rho}f\Big(\frac{x}{\delta_\e},\nabla u\Big)\dx
		\colon u\in W^{1,2}(Q_\rho;\R^m)\,, u=u_\xi\ \text{near}\ \partial Q_\rho\bigg\}\,,
	\end{equation*}
	and
	\begin{equation*}\label{eq:m^s}
		\m^{s}_\e(\bar u_{\e}^\nu,Q^\nu_\rho)\defas\inf\bigg\{\int_{Q_\rho^\nu}\left(\frac{(1-v)^2}{\e}+\e h\Big(\frac{x}{\delta_\e},\nabla v\Big)\right)\dx
		\colon v\in\mathscr A(\overline u_{\e}^\nu,Q_\rho^\nu)\bigg\}\,,
	\end{equation*}
	with
	\begin{equation}\label{def_A}
	\begin{split}
			\mathscr A(\bar u_{\e}^\nu,Q_\rho^\nu) \defas\big\{v\in W^{1,2}(Q^\nu_\rho),\ 0\le v\le 1\colon\exists\, u\in W^{1,2}(Q^\nu_\rho;\R^m)\ &\text{with}\ v\nabla u=0\ \text{a.e. in}\ Q^\nu_\rho\\
			&\text{and}\ (u,v)=(\bar u_{\e}^\nu,\bar v_{\e}^\nu)\ \text{near}\ \partial Q^\nu_\rho\big\}
			\end{split}
	\end{equation}
	where $(\bar{u}_{\e}^\nu,\bar{v}_{\e}^\nu)$ is as in~\ref{uv-bar-e}.
	\begin{remark}\label{rem:f-g-tilde}
		If $\eta_\e\equiv 0$, by invoking~\cite[Theorem 3.1]{BMZ} we can deduce the existence of a sequence $(\e_k)$ such that the corresponding functionals $\F_{\e_k}(\cdot,\cdot,A)$ $\Gamma$-converge in $\LtL$ to 
		\begin{equation*}
		\hat F^\ell(u,1,A)=\int_A\hat{f}^\ell(\nabla u)\dx+\int_{S_u\cap A}\hat{g}^\ell(\nu_u)\dHn\quad u\in GSBV^2(A;\R^m)\,,
		\end{equation*}
		where the integrands $\hat{f}^\ell$ and $\hat g^\ell$ are given by
		\begin{equation}\label{eq:densities-vol}
				\begin{split}
				\hat	f^\ell(\xi)&=
					\limsup_{\rho\to0}\frac{1}{\rho^n}\limsup_{k\to\infty}\m^{b}_{\e_k}(u_\xi,Q_\rho)=\limsup_{\rho\to0}\frac{1}{\rho^n}\liminf_{k\to\infty}\m^{b}_{\e_k}(u_\xi,Q_\rho)
				\end{split}
			\end{equation}
			and
			\begin{equation}\label{eq:densities-sur}
				\begin{split}
				\hat	g^\ell(\nu)&= \limsup_{\rho\to0}\frac{1}{\rho^{n-1}}\limsup_{k\to\infty}\m^{s}_{\e_k}( \bar{u}_{\e_k}^\nu,Q^\nu_\rho)= \limsup_{\rho\to0}\frac{1}{\rho^{n-1}}\liminf_{k\to\infty}\m^{s}_{\e_k}(\bar u^\nu_{\e_k},Q^\nu_\rho)\,.
				\end{split}
			\end{equation}
For later use we observe that arguing as in~\cite[Proposition 2.6]{BMZ} one can show that $\hat{g}^\ell(\nu)$ in~\eqref{eq:densities-sur} can be equivalently written by replacing $\m_{\e_k}^s(\bar u_{\e_k}^\nu,Q_\rho^\nu)$ with $\m_{\e_k}^s(\bar{u}_{\alpha _k}^\nu,Q_\rho^\nu)$, with $\alpha_k\sim\e_k$. Moreover, an inspection of the proof of~\cite[Proposition 7.4]{BMZ} reveals that the $u$-variable in~\eqref{def_A} can be taken such that $\|u\|_{L^\infty}\leq 1$ and 
	\begin{equation}\label{cond:u-variable}
	u(x)\in\{0,e_1\}\quad\text{if}\quad v(x)\neq 0,
	\end{equation}
	for a.e. $x\in Q_\rho^\nu$.

In view of \eqref{eq:densities-sur} we know that for every $\ell\in [0,+\infty]$ the surface integrand $\hat{g}^\ell$ is independent of the jump opening $[u]$.
	We notice, however, that the methods employed in~\cite{BMZ} to establish the independence of $[u]$ of the surface term cannot be directly transferred to the case $\eta_\e>0$ and therefore to the integrand ${g}^\ell$ appearing in~\eqref{tilde-F}. 
\end{remark}	
	
	Upon assuming that $(\e_k)$ is a subsequence along which both Theorem~\ref{thm:gamma_limit} and~\cite[Theorem 3.1]{BMZ} hold true, we can readily deduce that
	\begin{equation}\label{est:lb:f-g-tilde}
	{f}^\ell(\xi)\geq  \hat{f}^\ell(\xi)\quad\text{and}\quad {g}^\ell(\zeta,\nu)\geq\hat{g}^\ell(\nu)\,,
	\end{equation}
	for every $\xi\in\R^{m\times n}$, every $(\zeta,\nu)\in \R_0^m\times\S^{n-1}$, and every $\ell \in [0,+\infty]$. 
	
	In Proposition~\ref{prop:volume_density} below we show that ${f}^\ell$ and $\hat{f}^\ell$ coincide and that for every $\ell\in [0,+\infty]$ they are actually equal to $f_\hom$ as in \eqref{f_hom}. Furthermore, in Propositions~\ref{prop:supercritical_case} and~\ref{prop:critical_case} we prove that ${g}^\ell$ and $\hat{g}^\ell$ coincide for every $\ell\in [0,+\infty)$; therefore, in these regimes ${g}^\ell$ is independent of $\zeta$.
	Moreover, for $\ell=0$ there holds ${g}^0=\hat{g}^0 =g^0_\hom$, with $g^0_\hom$ given by \eqref{g_hom_supercritical}; while for $\ell \in (0,+\infty)$ there holds ${g}^\ell=\hat{g}^\ell =g^\ell_\hom$,  with $g_\hom^\ell$ as in \eqref{g_hom_critical}.

	Eventually, in Proposition \ref{prop:subcritical:case} we deal with the case $\ell=+\infty$ and prove that ${g}^\ell$ coincides $g_\hom^\infty$ as in \eqref{g_hom_subcritical}. This ensures, in particular, that ${g}^\ell$ is independent of $\zeta$ also when $\ell=+\infty$. We notice, however, that in this regime it is not clear whether ${g}^\ell$ and $\hat{g}^\ell$ coincide.  	
	
	\subsection{Characterisation of the volume integrand}\label{sec:volume}
	In this subsection we characterise the volume integrand ${f}^\ell$ in~\eqref{tilde-F}. Namely, we prove the following result. 
\begin{proposition}\label{prop:volume_density}
	Let $f_{\rm hom}$ and $\hat f^\ell$ be as in \eqref{f_hom} and \eqref{eq:densities-vol}, respectively. 
	Let moreover $f^\ell$ be the volume integrand in \eqref{tilde-F}. Then for every $\xi\in\R^{m\times n}$ and every $\ell\in[0,+\infty]$ there holds
\begin{equation}\label{eq:f-tilde-hat-hom}
f^\ell(\xi)=\hat f^\ell(\xi)=f_{\rm hom}(\xi).
\end{equation}
\end{proposition}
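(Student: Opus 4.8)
The plan is to deduce \eqref{eq:f-tilde-hat-hom} from two facts added on top of the lower bound $f^\ell\geq\hat f^\ell$ already recorded in \eqref{est:lb:f-g-tilde}: first, that the (a priori subsequence-dependent) density $\hat f^\ell$ coincides with the purely periodic quantity $f_\hom$; second, that $f^\ell\leq f_\hom$ via an explicit recovery sequence. The first identity, combined with \eqref{est:lb:f-g-tilde}, will give $f^\ell(\xi)\geq f_\hom(\xi)$, and the recovery sequence will supply the matching upper bound; as a byproduct all three densities turn out to be independent of $\ell$ and of the extracted subsequence $(\e_k)$.

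\emph{Step 1: the identity $\hat f^\ell=f_\hom$.} Fix $\xi\in\R^{m\times n}$ and $\rho>0$. In the minimisation defining $\m^b_\e(u_\xi,Q_\rho)$ I would change variables by $y=x/\delta_\e$: a competitor $u$ with $u=u_\xi$ near $\partial Q_\rho$ corresponds to $w(y)\defas\delta_\e^{-1}u(\delta_\e y)$, which satisfies $\nabla w(y)=\nabla u(\delta_\e y)$ and $w=u_\xi$ near $\partial Q_{\rho/\delta_\e}$, so that
\[
\frac{1}{\rho^n}\,\m^b_\e(u_\xi,Q_\rho)=\frac{1}{(\rho/\delta_\e)^n}\inf\Big\{\int_{Q_{\rho/\delta_\e}}f(y,\nabla w)\dy\colon w\in W^{1,2}(Q_{\rho/\delta_\e};\R^m),\ w=u_\xi\ \text{near}\ \partial Q_{\rho/\delta_\e}\Big\}.
\]
Since $\delta_{\e_k}\to0$ forces $\rho/\delta_{\e_k}\to+\infty$, the right-hand side converges, as $k\to\infty$, to $f_\hom(\xi)$ by the classical periodic homogenisation formula \eqref{f_hom} (see Remark~\ref{rem:prop-f} and \cite{Braides85,Mueller87}). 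Hence both the $\liminf_k$ and the $\limsup_k$ in \eqref{eq:densities-vol} equal $f_\hom(\xi)$ for every $\rho>0$, and taking $\limsup_{\rho\to0}$ yields $\hat f^\ell(\xi)=f_\hom(\xi)$. Together with \eqref{est:lb:f-g-tilde} this gives $f^\ell(\xi)\geq f_\hom(\xi)$.

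\emph{Step 2: the upper bound $f^\ell\leq f_\hom$.} By the integral representation \eqref{tilde-F} and the fact that $u_\xi$ has no jump set, this amounts to showing $\F^\ell(u_\xi,1,Q)\leq f_\hom(\xi)$ (recall $|Q|=1$). Fix $\sigma>0$; using the equivalent formula \eqref{f-hom-equiv} I would pick $r\in\N^*$ and $w\in W^{1,2}(Q_r;\R^m)$ with $w=0$ near $\partial Q_r$ and $r^{-n}\int_{Q_r}f(x,\nabla w+\xi)\dx\leq f_\hom(\xi)+\sigma$, extend $w$ by $r\Z^n$-periodicity to $\tilde w\in W^{1,2}_\loc(\R^n;\R^m)$, and set
\[
u_\e(x)\defas\xi x+\delta_\e\,\tilde w\big(x/\delta_\e\big),\qquad v_\e\equiv1\,.
\]
Then $u_\e\to u_\xi$ in $L^2(Q;\R^m)$, hence in measure, because $\int_Q|\delta_\e\tilde w(x/\delta_\e)|^2\dx=\delta_\e^2\int_Q|\tilde w(x/\delta_\e)|^2\dx\to0$ by the mean-value property of periodic $L^1$ functions, while trivially $v_\e\to1$. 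Using $h(x,0)=0$ (a consequence of~\ref{hyp:growth-h}) one has $\F_\e(u_\e,v_\e,Q)=(1+\eta_\e)\int_Q f\big(x/\delta_\e,\ \xi+\nabla\tilde w(x/\delta_\e)\big)\dx$. By~\ref{hyp:growth-f} this integral is bounded by $c_2\int_Q|\xi+\nabla\tilde w(x/\delta_\e)|^2\dx$, which converges to the finite number $c_2\,r^{-n}\int_{Q_r}|\xi+\nabla w|^2\dx$, so $\eta_\e$ times it is infinitesimal; again by the mean-value property, $\int_Q f(x/\delta_\e,\xi+\nabla\tilde w(x/\delta_\e))\dx\to r^{-n}\int_{Q_r}f(x,\xi+\nabla w)\dx\leq f_\hom(\xi)+\sigma$. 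Therefore $\limsup_{\e\to0}\F_\e(u_\e,v_\e,Q)\leq f_\hom(\xi)+\sigma$, and the $\Gamma$-liminf inequality along $(\e_k)$ gives $\F^\ell(u_\xi,1,Q)\leq f_\hom(\xi)+\sigma$; letting $\sigma\to0$ we conclude $f^\ell(\xi)=\F^\ell(u_\xi,1,Q)\leq f_\hom(\xi)$.

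Combining the two inequalities proves \eqref{eq:f-tilde-hat-hom}. I do not expect a genuine obstacle here: Step 1 is just a scaling identity plus the standard homogenisation limit, and in Step 2 the only point requiring care is the extra term carrying $\eta_\e$, which is harmless precisely because the rescaled corrector gradients $\nabla\tilde w(\cdot/\delta_\e)$ remain equibounded in $L^2(Q)$ while $\eta_\e\to0$ — so, unlike the surface densities treated later, the volume density sees no interaction between the approximation and oscillation scales and is in particular independent of $\ell$.
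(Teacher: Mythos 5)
Your argument is correct and follows the paper's proof essentially line for line: Step 1 is the same change of variables $y=x/\delta_\e$ in $\m_\e^b$ reducing to the classical periodic formula, and Step 2 is the same recovery sequence $u_\e(x)=\xi x+\delta_\e\tilde w(x/\delta_\e)$ built from a near-optimal corrector in \eqref{f-hom-equiv}, extended periodically, with the $\eta_\e$ term handled by the uniform $L^2$ bound on $\nabla u_\e$ and the mean-value (Riemann--Lebesgue) convergence of $f(\cdot/\delta_\e,\xi+\nabla\tilde w(\cdot/\delta_\e))$. The only cosmetic difference is that you verify $u_\e\to u_\xi$ in $L^2(Q)$ via the explicit $\delta_\e^2$ bound, whereas the paper records weak $W^{1,2}_{\rm loc}$ convergence first, but both yield the needed convergence in measure.
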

\begin{proof}
	To not to overburden notation the $\Gamma$-converging subsequence provided by Theorem~\ref{thm:gamma_limit} is still denoted by $(\F_\e)$. 
	
	We establish the two equalities in~\eqref{eq:f-tilde-hat-hom} separately.
	
	\medskip
	\step 1 $\hat f^\ell(\xi)=f_{\rm hom }(\xi)$, for every $\xi\in\R^{m\times n}$ and every $\ell\in [0,+\infty]$.
	
	\smallskip
	Let $\e>0$, $\rho>2\e$, and $u\in W^{1,2}(Q_\rho;\R^m)$ be fixed and define $u_\e\in W^{1,2}(Q_{\rho/\delta_\e};\R^m)$ by setting $u_\e(x):=\frac{1}{\delta_{\e}}u(\delta_{\e} x)$, for every $x\in Q_{\rho/\delta_\e}$. Then clearly $u=u_\xi$ near $\partial Q_\rho$ if and only if $u_\e=u_\xi$ near  $\partial Q_{\rho/\delta_{\e}}$. Moreover, setting $r_\e\defas\rho/\delta_\e$ a change of variables gives
	\begin{equation*}
		\frac{1}{\rho^n}\int_{Q_\rho}f\Big(\frac{x}{\delta_\e},\nabla u\Big)\dx=
		\frac{\delta_\e^n}{\rho^n}\int_{Q_{\rho/\delta_\e}}f\left(x,\nabla u_\e\right)dx=\frac{1}{r_\e^n}\int_{Q_{r_\e}}f\left(x,\nabla u_\e\right)dx\,.
	\end{equation*}
	Passing to the infimum in $u$ and appealing to Remark~\ref{rem:prop-f} we thus deduce that
	\begin{equation*}
		\lim_{\e\to 0}\frac{1}{\rho^n}\m_{\e}^b(u_\xi,Q_\rho)=	f_{\rm hom}(\xi),
	\end{equation*}
	where the limit above exists independently of $\rho>0$ and of  the $\Gamma$-converging subsequence. Then \eqref{eq:densities-vol} yields the claim.
	    
	    \medskip
		\step 2 $f^\ell(\xi)=f_{\rm hom }(\xi)$, for every $\xi\in\R^{m\times n}$ and every $\ell\in [0,+\infty]$.
		
		\smallskip
	By combining~\eqref{est:lb:f-g-tilde} and Step 1 we immediately deduce that
	\begin{equation}\label{lb:f-tilde}
		{f}^\ell(\xi)\geq f_\hom(\xi)\,,
	\end{equation}
	for every $\xi\in\R^{m\times n}$ and every $\ell\in [0,+\infty]$. 
	
	The proof of the opposite inequality follows by constructing a recovery sequence similarly as in the case of classical homogenisation in Sobolev spaces (see~\cite[Lemma 2.1(a)]{Mueller87}). For the readers' convenience we repeat this construction in our setting. Let $\sigma>0$ be fixed, thanks to~\eqref{f-hom-equiv} we can find $r\in\N^*$ and $u\in W^{1,2}(Q_r;\R^m)$ with $u=0$ near $\partial Q_r$ such that
	\begin{equation}\label{f-hom-almost-optimal}
		\frac{1}{r^n}\int_{Q_r}f(x,\nabla u+\xi)\dx\leq f_\hom(\xi)+\sigma\,.
	\end{equation} 
	We then extend $u$ $r$-periodically to $\R^n$ and define $(u_\e)\subset W^{1,2}_\loc(\R^n;\R^m)$ as 
	\[
	u_\e(x)\defas u_\xi(x)+\delta_\e u\Big(\frac{x}{\delta_\e}\Big).
	\]
	Clearly $({u}_\e)\subset W^{1,2}_\loc(\R^n;\R^m)$; moreover, by definition of $u_\e$ we have that $u_\e\wto u_\xi$ weakly in $W_\loc^{1,2}(\R^n;\R^m)$ and $u_\e\to u_\xi$ in $L_\loc^{2}(\R^n;\R^m)$, therefore $u_{\e}\to u_\xi$ in $L^0(\R^n;\R^m)$. Hence Theorem~\ref{thm:gamma_limit} implies that
	\begin{equation}\label{ub:tilde-f1}
	\begin{split}
		{f}^\ell(\xi)=\F^\ell(u_\xi,1,Q) &\leq\liminf_{\e\to 0}\F_{\e}({u}_{\e},1,Q)=\liminf_{\e\to 0}\int_{Q}(1+\eta_{\e})f\Big(\frac{x}{\delta_{\e}},\nabla{u}_{\e}\Big)\dx\,.
		\end{split}
	\end{equation}
	Set 
	\[
	f\Big(\frac{x}{\delta_{\e}},\nabla {u}_{\e}\Big)=g\Big(\frac{x}{\delta_{\e}}\Big)\; \text{ with }\; g(y)\defas f(y,\xi+\nabla u(y)).
	\] 
	Since $g$ is $r$-periodic, by the Riemann-Lebesgue Theorem we get that 
	\[
	g\Big(\frac{\cdot}{\delta_{\e}}\Big)\wto \frac{1}{r^n}\int_{Q_r}g(y)\dy \; \text{ weakly in $L^1(Q)$}, 
	\]
	so that in particular
	\begin{equation*}
		\lim_{\e\to 0}\int_{Q}f\Big(\frac{x}{\delta_{\e}},\nabla{u}_{\e}\Big)\dx=\frac{1}{r^n}\int_{Q_r}f(y,\xi+\nabla u)\dy\,.
	\end{equation*}
	Thus, since $\nabla u_{\e}$ is uniformly bounded in $L^2(Q;\R^{m\times n})$, using~\ref{hyp:growth-f} and combining~\eqref{f-hom-almost-optimal} and~\eqref{ub:tilde-f1} we obtain
	\begin{equation}\label{ub:tilde-f2}
		{f}^\ell(\xi)\leq f_\hom(\xi)+\sigma\,.
	\end{equation} 	
	Eventually, we conclude gathering~\eqref{lb:f-tilde} and \eqref{ub:tilde-f2}, and by the arbitrariness of $\sigma>0$. 
\end{proof}
		\section{Oscillations on a larger scale than the singular perturbation}\label{sec:larger}
	In this section we characterise $\hat g^\ell$ and $g^\ell$ in the regime $\ell=0$; the latter corresponds to the case where the scale of the oscillations $\delta_\e$ is much larger than the scale of the singular perturbation $\e$.
	\begin{proposition}\label{prop:supercritical_case}
	Assume that $\ell=0$. Let $g^0_\hom$ and $\hat g^0$ be as in~\eqref{g_hom_supercritical} and~\eqref{eq:densities-sur}, respectively. Let $g^0$ be the surface integrand in~\eqref{tilde-F}.
	Then for every $(\zeta,\nu)\in\R_0^m\times\S^{n-1}$ there holds
	\begin{equation*}
	{g}^0(\zeta,\nu)=\hat g^0(\nu)=g_\hom^0(\nu)\,.
	\end{equation*} 
	\end{proposition}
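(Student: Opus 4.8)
The plan is to prove the chain of equalities by establishing the two inequalities ${g}^0(\zeta,\nu) \geq \hat g^0(\nu)$ (already available via~\eqref{est:lb:f-g-tilde}) and ${g}^0(\zeta,\nu) \leq g_\hom^0(\nu)$, together with the identification $\hat g^0(\nu) = g_\hom^0(\nu)$. Since $g_\hom^0(\nu) \geq \hat g^0(\nu)$ will follow once we prove the identification — or more directly once we prove ${g}^0(\zeta,\nu)\le g_\hom^0(\nu)$ combined with ${g}^0\ge\hat g^0$ and a matching lower bound $\hat g^0\ge g^0_\hom$ — the real content splits into: (a) a lower bound ${g}^0(\zeta,\nu) \geq g_\hom^0(\nu)$ obtained directly from sequences of competitors for $\F_\e$, and (b) an upper bound ${g}^0(\zeta,\nu) \leq g_\hom^0(\nu)$ via an explicit recovery sequence. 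The independence of $\zeta$ is then automatic from the sandwich.

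For the lower bound I would take any sequence $(u_\e,v_\e)$ with $u_\e \to u^\nu_\zeta$ in $L^0$ and $v_\e \to 1$, with $\sup_\e \F_\e(u_\e,v_\e,Q^\nu_\rho) < +\infty$, and use the \textbf{Modica--Mortola trick} combined with Ambrosio's \textbf{coarea argument}. Concretely, on $Q^\nu_\rho$ write $\frac{(1-v_\e)^2}{\e} + \e\, h\big(\tfrac{x}{\delta_\e},\nabla v_\e\big) \geq 2(1-v_\e)\sqrt{h\big(\tfrac{x}{\delta_\e},\nabla v_\e\big)} \geq 2\,\big|\nabla\big(\Phi_\e(v_\e)\big)\big|_{h(x/\delta_\e,\cdot)}$ where $\Phi_\e$ is a primitive of $s\mapsto (1-s)$ and $|\cdot|_{h(y,\cdot)}$ denotes the norm associated with the ($2$-homogeneous, convex-in-$w$ — after Remark~\ref{rem:prop-g}\ref{rem:g-0} we may assume $\sqrt h$ convex in $w$) integrand $\sqrt{h(y,\cdot)}$. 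Applying the coarea formula to $\Phi_\e(v_\e)$ and using that the volume term forces $v_\e\nabla u_\e$ to be small — so that a.e.\ level set of $v_\e$ separates the two phases of the limit $u^\nu_\zeta$ — yields a lower bound by the anisotropic, $x/\delta_\e$-periodic perimeter functional $\int_{\partial^*E\cap Q^\nu_\rho}\sqrt{h\big(\tfrac{x}{\delta_\e},\nu_E\big)}\dHn$ evaluated on a set $E$ with $\chi_E \to \chi_{\{x\cdot\nu>0\}}$. At this point hypothesis~\ref{hyp:Lip-h} (used only here, as the text notes) lets us invoke the homogenisation of partition-type perimeter functionals~\cite{AB90-2} — equivalently, pass through the $\delta_\e$-dependent functionals~\eqref{intro:sub} and then let $\delta_\e\to 0$ using~\cite{BDV} — to conclude $\liminf_\e \F_\e^s(v_\e,Q^\nu_\rho) \geq \frac{1}{2}\rho^{n-1} g_\hom^0(\nu) + o_\rho(1)$ after dividing by $\rho^{n-1}$ and letting $\rho\to 0$; hence ${g}^0(\zeta,\nu) \geq g^0_\hom(\nu)$.

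For the upper bound I would fix $\sigma>0$, pick (by definition~\eqref{g_hom_supercritical}) a large $r$ and a partition $w \in BV(Q^\nu_r;\{0,e_1\})$ with $w = u^\nu$ near $\partial Q^\nu_r$ and $2\int_{S_w\cap Q^\nu_r}\sqrt{h(x,\nu_w)}\dHn \leq r^{n-1}\big(g^0_\hom(\nu)+\sigma\big)$; then rescale to the unit cube and \textbf{mollify the sharp interface} into a diffuse one at scale $\e$, building $v_\e$ from the optimal-profile function $\vv$ of Remark~\ref{rem:op} transported along (a regularisation of) $S_w$, and setting $u_\e$ equal to a mollification of $w$ that is constant where $v_\e\neq 0$ so that $v_\e\nabla u_\e = 0$; one checks $u_\e\to u^\nu_\zeta$, $v_\e\to 1$ and, using that $\ell=0$ means $\e\ll\delta_\e$ so that $h(x/\delta_\e,\cdot)$ is essentially frozen at the scale $\e$ of the transition layer, that $\F^s_\e(v_\e,Q^\nu_\rho)$ is asymptotically controlled by $2\int_{S_w}\sqrt{h(x,\nu_w)}\dHn$ up to $o(1)+O(\sigma)$; the volume term vanishes by construction and the $\eta_\e$-term is negligible since $\eta_\e\ll\e$. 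Letting $\e\to0$, $\rho\to0$, and $\sigma\to0$ gives ${g}^0(\zeta,\nu)\le g^0_\hom(\nu)$.

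The identification $\hat g^0(\nu) = g^0_\hom(\nu)$ follows from the same two bounds applied to the functionals with $\eta_\e\equiv 0$ (the construction is simpler there since the volume term is genuinely decoupled), or can be read off from~\eqref{eq:densities-sur} by the change of variables $x\mapsto x/\delta_\e$ in $\m^s_{\e}$ exactly as in Step 1 of the proof of Proposition~\ref{prop:volume_density}, combined with~\cite{AB90-2,BDV}. \textbf{The main obstacle} is the lower bound: making rigorous the passage from the diffuse surface term of $\F_\e$ to the $\e$-independent anisotropic perimeter — i.e.\ simultaneously (i) localising the transition of $v_\e$ near $S_{u^\nu_\zeta}$ using only the weak information $v_\e\nabla u_\e\approx 0$ coming from the (vanishing) volume term, and (ii) controlling the interplay between the fast oscillation $x/\delta_\e$ and the level-set slicing so that the coarea estimate survives in the limit — before one can even invoke the homogenisation result of~\cite{AB90-2}. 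This is precisely the step where~\ref{hyp:Lip-h} enters, to guarantee the good behaviour of the $\delta_\e$-periodic anisotropic perimeters as $\delta_\e\to 0$.
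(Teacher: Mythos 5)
Your Step~2 (the upper bound $g^0(\zeta,\nu)\le g^0_\hom(\nu)$) matches the paper's construction: pick an almost-optimal piecewise-constant competitor $w$ for~\eqref{g_hom_supercritical}, regularise $S_w$ to be $C^2$, transport a nearly optimal one-dimensional profile along the normal at scale $\e$, and set the $u$-variable to interpolate on a much thinner layer of width $\xi_\e=\sqrt{\e\eta_\e}$ so that $\bar v_\e\nabla\bar u_\e=0$ and the $\eta_\e$-bulk contribution vanishes. One correction: hypothesis~\ref{hyp:Lip-h} is used in \emph{this} upper bound (for the derivative estimates~\eqref{eq:estimates_psi} controlling the error terms along the tubular neighbourhood of $S_w$), not in the lower bound as you state.

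Your lower bound, however, contains a genuine gap. You propose to show $g^0(\zeta,\nu)\ge g^0_\hom(\nu)$ directly from an arbitrary almost-recovery sequence $(u_\e,v_\e)$ for $\F_\e$, arguing that ``the volume term forces $v_\e\nabla u_\e$ to be small --- so that a.e.\ level set of $v_\e$ separates the two phases of the limit $u^\nu_\zeta$.'' That implication is exactly what cannot be taken for granted here: with $\eta_\e>0$ the volume term gives only $\int(v_\e^2+\eta_\e)|\nabla u_\e|^2\le C$, which does not yield, quantitatively and at the oscillation scale $\delta_\e$, the phase separation of $u_\e$ along superlevel sets of $v_\e$ that the coarea argument requires; moreover, one would also need to impose the boundary datum near $\partial Q^\nu_\rho$ by a cut-and-paste step before the rescaled perimeter can be compared to $\m^{\rm pc}$. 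Remark~\ref{rem:f-g-tilde} flags precisely this: the arguments in~\cite{BMZ} establishing independence of the jump opening do not carry over to $\eta_\e>0$, and your direct lower bound would re-create that difficulty. The paper avoids it entirely. Its Step~1 proves the stronger-looking inequality $\hat g^0(\nu)\ge g^0_\hom(\nu)$, starting from competitors $v\in\Adm(\bar u^\nu_\e,Q^\nu_\rho)$, i.e.\ functions paired with a $u$ satisfying \emph{exactly} $v\nabla u=0$ and the boundary conditions built into~\eqref{def_A}; crucially, by~\eqref{cond:u-variable} (a structural refinement from~\cite[Proposition 7.4]{BMZ}) this $u$ can be taken with values in $\{0,e_1\}$ wherever $v\neq0$. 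Then the Modica--Mortola inequality applied to $\Phi(v)$ plus coarea produce a superlevel set $E^t$ on which $u$ is $\{0,e_1\}$-valued, and $\partial^*E^t$ is the disjoint union of the jump sets of two $BV(Q^\nu_\rho;\{0,e_1\})$ partition functions $u^0,u^1$ inheriting the datum $u^\nu$; rescaling by $\delta_\e$ feeds this directly into the asymptotic formula~\eqref{eq:ghom-mpc}. None of the level-set/separation analysis you allude to is needed, because it is built into the definition of $\Adm$. The remaining inequality $g^0(\zeta,\nu)\ge\hat g^0(\nu)$ comes for free from~\eqref{est:lb:f-g-tilde}, and the chain closes. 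You mention this route parenthetically as an alternative; it is in fact the only one that works here, and you should develop it rather than the direct lower bound you detail.
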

	\begin{proof}
	Not to overburden notation we still denote by $(\F_\e)$ the $\Gamma$-converging subsequence given by Theorem~\ref{thm:gamma_limit}.
	
	We introduce the function $\Psi:\R^n\times\R^n\to[0,+\infty)$ given by
		\begin{equation*}
			\Psi(x,w)\defas\sqrt{h(x,w)}
		\end{equation*}
	and observe that by~\ref{hyp:growth-h} we have
	\begin{equation}\label{eq:growth_psi}
	\sqrt{c_3}|w|\le \Psi(x,w)\le\sqrt{c_4}|w|\,,
	\end{equation}
	for every $x,w\in\R^n$. Moreover, by \ref{hyp:hom-h} $\Psi$ is positively $1$-homogeneous and symmetric in $w$. For $\nu\in\S^{n-1}$ and $r>0$ it is also convenient to introduce the following notation
	\begin{equation}\label{def:mpc}
	\m^{\rm pc}(u^\nu, Q_r^\nu)\defas\inf\Biggl\{\int_{S_u\cap Q^\nu_r}\Psi^{**}(x,\nu_u)\dHn\colon u\in BV(Q_r^\nu;\{0,e_1\})\,, u=u^\nu\ \text{near}\ \partial Q^\nu_r\Biggr\}\,,
	\end{equation}	 
	where $\Psi^{**}$ denotes the convex envelope of $\Psi$ in the second variable. In view of Remark~\ref{rem:prop-g} \eqref{rem:g-0} $g_\hom^0$ can be rewritten as
	\begin{equation}\label{eq:ghom-mpc}
	g_\hom^0(\nu)=\lim_{r\to+\infty}\frac{2}{r^{n-1}}\m^{\rm pc}(u^\nu, Q_r^\nu)\,.
	\end{equation}
	 By \eqref{est:lb:f-g-tilde}, it suffices to show that
	\begin{equation}\label{est:ghom-supercrit}
	\hat{g}^0(\nu)\geq g_\hom^0(\nu)\geq {g}^0(\zeta,\nu),
	\end{equation}
	for every $(\zeta,\nu)\in\R^m_0\times\S^{n-1}$.
	The proof of~\eqref{est:ghom-supercrit} will be carried out in two separate steps.

		\medskip
		\step 1 $\hat g^0(\nu)\ge g^0_{\rm hom}(\nu)$, for every $\nu\in\S^{n-1}$. 
		
		\smallskip
 
	Let $\nu\in\S^{n-1}$, $\e>0$, $\rho>2\e$ and $v\in\Adm(\bar{u}_\e^\nu,Q_\rho^\nu)$ be arbitrary. Then there exists $u\in W^{1,2}(Q_\rho^\nu;\R^m)$ such that
	\begin{equation}\label{cond:uv-supercrit}
	v\nabla u=0\ \text{a.e. in}\ Q_\rho^\nu\quad\text{and}\quad (u,v)=(\bar u^\nu_{\e},\bar v^\nu_{\e})\ \text{near}\ \partial Q^\nu_\rho\,.
	\end{equation}
	Starting from the pair $(u,v)$ we now construct suitable competitors for the minimisation problem defining $\m^{\rm pc}$ in~\eqref{def:mpc}.   
		To this end, we define the increasing function $\Phi\colon[0,1]\to[0,1/2]$ as
		\begin{equation*}
			\Phi(t)\defas\int_0^t(1-z)\dz=\frac12-\frac{(1-t)^2}{2}\,.
		\end{equation*}
		Then, from the Young Inequality together with the homogeneity of $\Psi$ we deduce
		\begin{align}\nonumber
\F_\e^s(v,Q_\rho^\nu)= \int_{Q_\rho^\nu}\bigg(\frac{(1-v)^2}{\e}+\e h\Big(\frac{x}{\delta_\e},\nabla v\Big)\bigg)\dx &\ge 2\int_{Q_\rho^\nu}\Psi\left(\frac{x}{\delta_\e},(1-v)\nabla v\right)\dx\\ \nonumber
& \ge 2\int_{Q_\rho^\nu}\Psi^{**}\left(\frac{x}{\delta_\e},(1-v)\nabla v\right)\dx
\\ \label{est:Young}
& =2 \int_{Q_\rho^\nu}\Psi^{**}\left(\frac{x}{\delta_\e},\nabla \Phi(v)\right)\dx\,.
\end{align}
		For $s\in[0,1/2)$ we define the sets
		\begin{equation*}
		E^s\defas\{x\in Q_\rho^\nu\colon\Phi(v(x))>s\}\,,
		\end{equation*}
		which have finite perimeter for $\L^1$-a.e. $s\in (0,1/2)$. In view of~\eqref{est:Young}, by a generalised Coarea Formula (see e.g. \cite[Lemma 2.4]{DM}) and the Mean Value Theorem, we find $t\in (0,1/2)$ such that
		\begin{equation}\label{eq:coarea}
			\begin{split}
			\F_\e^s(v,Q_\rho^\nu)
				&\geq 2\int_{Q_\rho^\nu}\Psi^{**}\Big(\frac{x}{\delta_\e},\nabla \Phi(v)\Big)\dx
				=2\int_0^{1/2}\bigg(\int_{\partial^* E^s}\Psi^{**} \Big(\frac{x}{\delta_\e},\nu_{E^s}\Big)\dHn\bigg)\ds\\
				&\geq\int_{\partial^*E^t}\Psi^{**}\Big(\frac{x}{\delta_\e},\nu_{E^t}\Big)\dHn\,,
			\end{split}
		\end{equation}
		where $\partial^*E^s$ and $\nu_{E^s}$ denote the reduced boundary of $E^s$ and the measure theoretic inner normal to $E^s$, respectively.
		
		A direct computation shows that $v(x)>1-\sqrt{1-2t}>0$ for $x\in E^t$. As a consequence, from~\eqref{cond:u-variable} in Remark~\ref{rem:f-g-tilde} we obtain
		\begin{equation}\label{cond:u-Es}
		u(x)\in\{0,e_1\}\ \text{for a.e.}\ x\in E^t\,.
		\end{equation}
		Since in addition $E^t$ has finite perimeter in $Q_\rho^\nu$, the two functions
		\begin{equation*}
		u^0\defas u\chi_{E^t}\,,\qquad u^1\defas u\chi_{E^t}+e_1(1-\chi_{E^t})
		\end{equation*}
		belong to $BV(Q_\rho^\nu;\{0,e_1\})$. Moreover, up to an $\H^{n-1}$-negligible set, $\partial^*E^t$ is the disjoint union of $S_{u^0}$ and $S_{u^1}$. Indeed, from~\eqref{cond:u-Es} we readily deduce that $\partial^*E^t\sm S_u$ is the disjoint union $J_{u^0}$ and $J_{u^1}$, where $J_{u^0}$ and $J_{u^1}$ are the set of approximate jump points of $u^0$ and $u^1$. Since $u\in W^{1,2}(Q_\rho^\nu;\R^m)$, we have $\H^{n-1}(S_u)=0$, while $\H^{n-1}(S_{u^0}\sm J_{u^0})=\H^{n-1}(S_{u^1}\sm J_{u^1})=0$ by the properties of $BV$-functions. Hence, the claim follows. Since moreover $\nu_{u^0}=\pm\nu_{E^t}=\nu_{u^1}$ $\H^{n-1}$-a.e. on $\partial^* E^t$, from~\eqref{eq:coarea} together with the symmetry of $\Psi$ we deduce that
		\begin{equation}\label{est:u0-u1}
		\F_\e^s(v,Q_\rho^\nu) \geq\int_{S_u^0\cap Q_\rho^\nu}\Psi^{**}\Big(\frac{x}{\delta_\e},\nu_{u^0}\Big)\dHn +\int_{S_u^1\cap Q_{\rho}^\nu}\Psi^{**}\Big(\frac{x}{\delta_\e},\nu_{u^1}\Big)\dHn
		\end{equation}
		We extend $u^0, u^1$ by $u^\nu$ to $Q_{(1+\delta_\e)\rho}^\nu$ without renaming them.  In this way, thanks to~\eqref{cond:uv-supercrit} we have 
		\begin{equation}\label{inclusion:Suk}
		S_{u^k}\cap\big(Q_{(1+\delta_\e)\rho}^\nu\sm Q_\rho^\nu\big)\subset\Big(\Pi^\nu\cap\big(Q_{(1+\delta_\e)\rho}^\nu\sm Q_\rho^\nu\big)\cup\big(\partial Q_\rho^\nu\cap\{|x\cdot\nu|\leq \e\}\big)\Big)\quad\text{for}\ k=0,1\,.
		\end{equation}
		Finally, for $r_\e\defas(1+\delta_\e)\rho/\delta_\e$ and $k=0,1$ let $u_\e^k\in BV(Q_{r_\e}^\nu;\{0,e_1\})$ be given by $u_\e^k(x)\defas u^k(\delta_\e x)$. Then $u_\e^k=u^\nu$ near $\partial Q_{r_\e}^\nu$ and $S_{u_\e^k}=\frac{1}{\delta_\e}S_{u^k}$. Thus, by the change of variables $y=x/\delta_\e$ we obtain
		\begin{equation*}
		\frac{1}{\rho^{n-1}}\int_{S_u^k\cap Q_{(1+\delta_\e)\rho}^\nu}\hspace*{-2em}\Psi^{**}\Big(\frac{x}{\delta_\e},\nu_{u^k}\Big)\dHn\geq\frac{1}{r_\e^{n-1}}\int_{S_{u_\e}^k\cap Q_{r_\e}^\nu}\hspace*{-1em}\Psi^{**}(x,\nu_{u_\e^k})\dHn\geq \frac{1}{r_\e^{n-1}}\m^{\rm pc}(u^\nu,Q_{r_\e}^\nu)\,,
		\end{equation*}
		which together with~\eqref{est:u0-u1} and~\eqref{inclusion:Suk} gives
		\begin{equation}\label{est:Fs-mpc}
		\frac{1}{\rho^{n-1}}\F_\e^s(v,Q_\rho^\nu)\geq \frac{2}{r_\e^{n-1}}\m^{\rm pc}(u^\nu,Q_{r_\e}^\nu)-2\sqrt{c_3}\bigg((1+\delta_\e)^{n-1}-1+2(n-1)\frac{\e}{\rho}\bigg)\,.
		\end{equation}
		Since $v\in\Adm(\bar{u}_\e^\nu,Q_\rho^\nu)$ was arbitrarily chosen, we can pass to the infimum on the left-hand side of~\eqref{est:Fs-mpc} and let $\e\to 0$ to deduce that
		\begin{equation*}
		\frac{1}{\rho^{n-1}}\liminf_{\e\to 0}\m_\e^s(\bar{u}_\e^\nu,Q_\rho^\nu)\geq\lim_{\e\to 0}\frac{2}{r_\e^{n-1}}\m^{\rm pc}(u^\nu,Q_{r_\e}^\nu)=g_\hom^0(\nu)\,,
		\end{equation*}
		where the last equality follows from~\eqref{eq:ghom-mpc}. By the very definition of $\hat{g}^0(\nu)$ in~\eqref{eq:densities-sur} we conclude the proof of Step 1 by letting $\rho\to 0$.

		\medskip
		\step 2 $g^0(\zeta,\nu)\le g^0_{\rm hom}(\nu)$, for every $(\zeta,\nu)\in\R_0^m\times\S^{n-1}$.
		
		\smallskip	
		Let $(\zeta,\nu)\in\R_0^m\times\S^{n-1}$; by Theorem \ref{thm:gamma_limit} we have that 
		\begin{equation*}
\tilde g^0(\zeta,\nu)=\F^0(u_\zeta^\nu,1,Q^\nu)\le\liminf_{\e\to0}\F_\e(u_\e,v_\e,Q^\nu)\,,
		\end{equation*}
		for any sequence $(u_\e,v_\e)$ converging to $(u_\zeta^\nu,1)$ in $L^0(\R^n;\R^m)\times L^0(\R^n)$. Then to conclude it is sufficient to construct a sequence $(\bar u_\e,\bar v_\e)$ converging to $(u_\zeta^\nu,1)$ in $L^0(\R^n;\R^m)\times L^0(\R^n)$ and such that 
		\begin{equation}\label{claim}
\limsup_{\e\to0}\F_\e(\bar u_\e,\bar v_\e,Q^\nu)\le g^0_{\rm hom}(\nu)\,.
		\end{equation}
		Moreover, since $g_\hom^0$ and $g^0(\zeta,\cdot)$ are continuous (\cf Remark~\ref{rem:prop-g}~\eqref{rem:g-0} and Remark~\ref{rem:prop-g-tilde}, respectively), it suffices to consider $\nu\in\S^{n-1}\cap\Q^n$, then the general case follows by density. 
		
		Let $\nu \in\S^{n-1}\cap\Q^n$ and let $R_\nu\in \Q^{n\times n}$ be an orthogonal matrix as in~\ref{Rn} so that $R_\nu e_n=\nu$.
		Then, for $\eta>0$ fixed we find $r\in \N^*$ with $rR_\nu\in\Z^{n\times n}$ and $u\in BV(Q^\nu_r;\{0,e_1\})$ with $u=u^\nu$ on $\partial Q^\nu_r$ satisfying
		\begin{equation}\label{eq:upp_bound}
			\frac{2}{r^{n-1}}\int_{{Q^\nu_r}\cap S_u}\Psi(x,\nu_u)\dHn\le g^0_{\rm hom}(\nu)+\eta\,.
		\end{equation}
	By Reshetnyak's continuity Theorem~\cite[Theorem 2.39]{AFP}, the continuity of $\Psi$, and \cite[Theorem 3.42]{AFP} we can assume without loss of generality that $S_u$ is of class $C^2$. Moreover, since the approximation of the set $\{u=e_1\}$ with smooth sets is a local procedure (\cf \cite[Remark 3.43]{AFP}) and $u=u^\nu$ in a neighbourhood of $\partial Q_r^\nu$, the boundary conditions satisfied by $u$ are not affected by the smoothing procedure.
	We then extend $u$ to $\R^n$ (without renaming it) in a way so that it is $r$ periodic in the directions $R_\nu e_i$, $i=1,\ldots, n-1$ and $u=u^\nu$ in $\{x\in\R^n\colon|x\cdot\nu|>r/2\}$. 
	In this way we have that
	\begin{equation}\label{inclusion:Su}
	S_u\subset\{x\in\R^n\colon|x\cdot\nu|<r/2\}\,.
	\end{equation}
	Since $S_u$ is of class $C^2$, denoting by $\d_{S_u}(x)\defas\dist(x,S_u)$ the distance from $S_u$, for $\alpha>0$ suitably small there is a unique projection $\pi_\alpha\colon\{x \colon \d_{S_u}(x)<\alpha\}\to S_u$ of class $C^2$.
	 We then set
	 \begin{equation*}
	 	\overline\nu(x)\defas\begin{cases}
	 		\nu_u((\pi_\alpha(x))&\text{if}\ \d_{S_u}(x)<\alpha\,,\\
	 		\nu&\text{otherwise}\,.
	 	\end{cases}	
	 \end{equation*}
	 By Remark~\ref{rem:op} we can find $T_\eta>0$ and $v_\eta\in W^{1,2}_\loc(\R^n)$, $v_\eta$ Lipschitz continuous,  with
	 $v_\eta(0)=0$,  $v_\eta(t)=1$ for $t\geq T_\eta$, $0\leq v_\eta\leq 1$, and
		\begin{equation}\label{eq:c_eta}
			C_\eta\defas\int_0^{+\infty}\left((1-v_\eta)^2+(v'_\eta)^2\right)\dt\leq 1+\eta\,.
		\end{equation}
Next we choose $\xi_\e\defas\sqrt{\e\eta_\e}$, so that $\xi_\e\ll\e$, and consider the pair $(\bar u_\e, \bar v_\e)$ given by  
\[
(\bar u_\e(x), \bar v_\e(x))\defas( u_\e(x/\delta_\e),  v_\e(x/\delta_\e))
\] 
where
		\begin{equation*}
			 u_\e(x)\defas\begin{cases}
				\displaystyle\left(1-\frac{\dist(x,\{u=e_1\})\delta_\e}{\xi_\e}\right)\zeta &\text{ if } \dist(x,\{u=e_1\})<\dfrac{\xi_\e}{\delta_\e}\,,\\
				0&\text{ otherwise}\,,
			\end{cases}
		\end{equation*} 
		and
		\begin{equation*}
			 v_\e(x)\defas\begin{cases}
				0&\text{ if } \d_{S_u}(x)\le\dfrac{\xi_\e}{\delta_\e}\,,\\
				\displaystyle v_\eta\left(\frac{\delta_\e}{\e}\frac{ \d_{S_u}(x)-\frac{\xi_\e}{\delta_\e}}{\Psi(x,\overline\nu(x))}\right)&\text{ if } \d_{S_u}(x)>\dfrac{\xi_\e}{\delta_\e}\,,
			\end{cases}
		\end{equation*}
	for every $x\in\R^n$. Clearly, $\bar u_\e\in W^{1,2}_\loc(\R^n;\R^m)$. Moreover, by \ref{hyp:growth-h} and \eqref{eq:growth_psi}
	\begin{equation}\label{est:dSu}
		(u_\e(x), v_\e(x))=(u_\zeta^\nu(x),1) \quad\text{ if }\quad \d_{S_u}\left(x\right)\ge\frac{\xi_\e}{\delta_\e}+ \frac{\e}{\delta_\e}\sqrt{c_4}T_\eta\,.
	\end{equation}
	As a consequence, since $\xi_\e\ll\e\ll\delta_\e$, for $\e$ sufficiently small we have that that $\d_{S_u}(x)<\alpha$ whenever $v_\e(x)\neq 1$. In particular, both the mappings $x\mapsto\d_{S_u}(x)$ and $x\mapsto\bar\nu(x)$ are Lipschitz continuous in the region where $v_\e\not\equiv 1$. Thus, in view of~\eqref{eq:growth_psi}, the regularity of $\Psi$ and $v_\eta$ ensure that $\bar{v}_\e\in W^{1,\infty}_\loc(\R^n)$.
Moreover, thanks to~\eqref{inclusion:Su} we have that $\d_{S_u}\big(\frac{x}{\delta_\e}\big)\geq\frac{\xi_\e}{\delta_\e}+\frac{\e}{\delta_\e}\sqrt{c_4}T_\eta$ if $|x\cdot\nu|\geq r$, for $\e$ sufficiently small. Thus,~\eqref{est:dSu} implies that
	\begin{equation*}
		(\bar u_\e(x),\bar v_\e(x))=\big(u_\zeta^\nu\big(\tfrac{x}{\delta_\e}\big),1\big)=(u_\zeta^\nu(x),1)\quad\text{ if }\quad |x\cdot\nu|>r\delta_\e\,,
	\end{equation*}
and thus $(\bar u_\e,\bar v_\e)$ converges to $(u_\zeta^\nu,1)$ in 
	$L^0(\R^n;\R^m)\times L^0(\R^n)$.

	Now it remains to estimate $\F_\e$ along the sequence $(\bar u_\e,\bar v_\e)$. To this end we start observing that
		\begin{equation*}
			\bar v_\e\nabla \bar u_\e=0\quad\text{ a.e. in }\in \R^n\,,
		\end{equation*}
	hence from~\ref{hyp:growth-f} we deduce that
	\begin{equation}\label{Fs(barv)}
		\F_\e(\bar u_\e,\bar v_\e,Q^\nu)\leq c_2\eta_\e\int_{Q^\nu}|\nabla \bar{u}_\e|^2\dx+\F^{s}_\e(\bar v_\e,Q^\nu)\,.
	\end{equation}
In order to estimate the right-hand side of \eqref{Fs(barv)} it is convenient to define the sets
		\begin{equation*}
			A_\e\defas\left\{x\in Q^\nu\colon\d_{S_u}\left(\frac{x}{\delta_\e}\right)\le\frac{\xi_\e}{\delta_\e}\right\}\,,
		\end{equation*}
	\begin{equation*}
	B_\e\defas\left\{x\in Q^\nu\colon\frac{\xi_\e}{\delta_\e}<\d_{S_u}\left(\frac{x}{\delta_\e}\right)<\frac{\xi_\e}{\delta_\e}+\frac{\e}{\delta_\e}\sqrt{c_4}T_\eta\right\}\,.
	\end{equation*}
	By definition of $\bar{u}_\e$ we have
	\begin{equation}\label{est:bulk-supercrit}
	\eta_\e\int_{Q^\nu}|\nabla \bar{u}_\e|^2\dx\leq\frac{C\eta_\e}{\xi_\e^2}\L^n(A_\e)=C\frac{\L^n(A_\e)}{\e}\,.
	\end{equation}
		Moreover,~\ref{hyp:growth-h} implies that $h\big(\frac{x}{\delta_\e},\nabla\bar{v}_\e(x)\big)=0$, if $x\in A_\e$ and $\F_\e^s(\bar{v}_\e, Q^\nu\sm(\overline A_\e \cup \overline{B}_\e))=0$. Therefore, we infer 
		\begin{equation}\label{eq:surf_term}
			\begin{split}
				\F^{s}_\e(\bar v_\e,Q^\nu)
				=\frac{\L^n(A_\e)}{\e}
				+	\F^{s}_\e(\bar v_\e,B_\e)
				\,.
			\end{split}
		\end{equation}
We now show that $\L^n(A_\e)/\e$ vanishes as $\e$ tends to zero. Since 
		\begin{equation*}
		A_\e=\delta_\e\bigg\{x\in Q_{1/\delta_\e}^\nu\colon\d_{S_u}\left(x\right)\le\frac{\xi_\e}{\delta_\e}\bigg\}
	\end{equation*}
by~\eqref{inclusion:Su} and the periodicity of $u$, if we cover $\{x\in Q_{1/\delta_\e}^\nu\colon\d_{S_u}\left(x\right)\le\frac{\xi_\e}{\delta_\e}\}$ with $(\lfloor1/(\delta_\e r)\rfloor+1)^{n-1}$
copies of $\{x\in Q^\nu_r\colon\d_{S_u}(x)\le\frac{\xi_\e}{\delta_\e}\}$ we get
\begin{equation*}
\L^n(A_\e)\le  \delta_\e\frac{(1+\delta_\e r)^{n-1}}{r^{n-1}}\L^n\bigg(\bigg\{x\in Q^\nu_r\colon\d_{S_u}(x)\le\frac{\xi_\e}{\delta_\e}\bigg\}\bigg)\,.
\end{equation*}
Since $S_u\cap Q^\nu_r$ is of class $C^2$, the $(n-1)$-dimensional Minkowski content of $S_u\cap Q^\nu_r$ coincides with $\mathcal{H}^{n-1}(S_u\cap Q^\nu_r)$, therefore
\begin{equation*}
\L^n\bigg(\bigg\{x\in Q^\nu_r\colon\d_{S_u}(x)\le\frac{\xi_\e}{\delta_\e}\bigg\}\bigg)= \mathcal{H}^{n-1}(S_u\cap Q^\nu_r)\frac{\xi_\e}{\delta_\e}+ O\left(\frac{\xi_\e}{\delta_\e}\right)\,;
\end{equation*}
then, since $\xi_\e\ll\e$, we have that in particular
\begin{equation}\label{eq:surf_term_1}
\frac{\L^n(A_\e)}{\e}\le\frac{\mathcal{H}^{n-1}(S_u\cap Q^\nu_{r})\xi_\e}{\e}\frac{(1+\delta_\e r)^{n-1}}{r^{n-1}}+o(1)=o(1)\,,
\end{equation}
as $\e\to0$. Hence, gathering~\eqref{Fs(barv)}-\eqref{eq:surf_term_1} implies
\begin{equation}\label{est:Fk-surf-subcrit}
\F_\e(\bar{u}_\e,\bar{v}_\e,Q^\nu)\leq\F_\e^s(\bar v_\e,B_\e)+o(1)\,,
\end{equation}
so that it only remains to estimate $\F_\e^s(\bar v_\e,B_\e)$. To do so set
		\begin{equation*}
g_\e(x)\defas\frac{\delta_\e}{\e}\frac{\d_{S_u}\Big(\frac{x}{\delta_\e}\Big)-\frac{\xi_\e}{\delta_\e}}{\Psi\left(\frac{x}{\delta_\e},\bar\nu\left(\frac{x}{\delta_\e}\right)\right)}\,,
		\end{equation*}
		thus $\bar v_\e(x)=v_\eta(g_\e(x))$. Therefore~\ref{hyp:hom-h} implies that
\begin{equation}\label{eq:Fks-Bk}
\F_\e^s(\bar{v}_\e,B_\e)=\int_{B_\e}\frac{(1-v_\eta(g_\e(x)))^2}{\e}+ \e (v_\eta'(g_\e(x))^2h\left(\frac{x}{\delta_\e},\nabla g_\e(x)\right)\dx\,.
\end{equation}		 
Moreover, by using that $\nabla\d_{S_u}(x/\delta_\e)=\delta_\e^{-1}\overline\nu(x/\delta_\e)$ we have 
\begin{equation}\label{eq:grad-gk}
\begin{split}
\nabla g_\e(x)
=	\frac{G_\e(x)}{\e\Psi\Big(\frac{x}{\delta_\e},\bar\nu\Big(\frac{x}{\delta_\e}\Big)\Big)}\,,
\end{split}
\end{equation}
where
\begin{equation*}
G_\e(x)\defas 
\Bigg(\bar\nu\Big(\frac{x}{\delta_\e}\Big) -\frac{\Big(\d_{S_u}\Big(\frac{x}{\delta_\e}\Big)-\frac{\xi_\e}{\delta_\e}\Big)\Big(\nabla_y\Psi\Big(\frac{x}{\delta_\e},\bar\nu\Big(\frac{x}{\delta_\e}\Big)\Big)+\nabla\bar\nu\Big(\frac{x}{\delta_\e}\Big)\nabla_w\Psi\Big(\frac{x}{\delta_\e},\bar\nu\Big(\frac{x}{\delta_\e}\Big)\Big)\Big)}{\Psi\left(\frac{x}{\delta_\e},\bar\nu\left(\frac{x}{\delta_\e}\right)\right)}
\Bigg)\,.
\end{equation*}	
Notice that for $x\in B_\e$ we have that $\nabla\overline\nu(x/\delta_\e)=\nabla(\nu_u(\pi_{\alpha}(x/\delta_\e)))$ for $\e$ small enough, moreover~\eqref{eq:growth_psi},\ref{hyp:cont-h}, and~\ref{hyp:Lip-h} yield
	\begin{equation}\label{eq:estimates_psi}
	\left|	\Psi\left(\frac{x}{\delta_\e},\bar\nu\left(\frac{x}{\delta_\e}\right)\right)\right|^{-1}\le c\,,\quad
	\left|\nabla_y\Psi\Big(\frac{x}{\delta_\e},\bar\nu\Big(\frac{x}{\delta_\e}\Big)\Big)\right|\le c\,,\quad
	\left|\nabla_w\Psi\Big(\frac{x}{\delta_\e},\bar\nu\Big(\frac{x}{\delta_\e}\Big)\Big)	\right|\le c\,,
\end{equation}
for some $c=c(n,L_2,L_3,c_3)>0$.
Hence \eqref{eq:estimates_psi} in particular implies that 
\[
G_\e(x)=\bar\nu\Big(\frac{x}{\delta_\e}\Big)+O\Big(\frac{\e}{\delta_\e}\Big)\quad \text{for $x\in B_\e$}\,,
\]
which together with~\eqref{eq:grad-gk}, \ref{hyp:hom-h}, and~\ref{hyp:cont-h} give
\begin{equation*}
			h\Big(\frac{x}{\delta_\e},\nabla g_\e(x)\Big)=\frac{h\big(\frac{x}{\delta_\e},G_\e(x)\big)}{\e^2\Psi^2\big(\frac{x}{\delta_\e},\bar{\nu}(\frac{x}{\delta_\e})\big)}=\frac{h\big(\frac{x}{\delta_\e},\bar{\nu}(\frac{x}{\delta_\e})\big)+O\big(\frac{\e}{\delta_\e}\big)}{\e^2\Psi^2\big(\frac{x}{\delta_\e},\bar{\nu}(\frac{x}{\delta_\e})\big)}\,.
		\end{equation*}
Thus, using that $\Psi^2=h$, from~\eqref{eq:Fks-Bk} we deduce that
			\begin{equation}\label{eq:FB_1}
			\begin{split}
				\F_\e^{s}(\bar v_\e,B_\e)
				&\le\left(1+O\left(\frac{\e}{\delta_\e}\right)\right) 		\int_{B_\e}\left(\frac{(1-v_\eta(g_\e(x)))^2}{\e}+ \frac{( v'_\eta(g_\e(x)))^2}{\e} \right)
				\dx\,.
			\end{split}
		\end{equation}
		In order to estimate the right-hand side of~\eqref{eq:FB_1}, it is convenient to write $B_\e$ as the disjoint union of the sets $B_\e^+$ and $B_\e^-$ defined as $B_\e^+\defas B_\e\cap\big\{x\colon u(\frac{x}{\delta_\e})=e_1\big\}$, $B_\e^-\defas B_\e\cap\big\{x\colon u(\frac{x}{\delta_\e})=0\big\}$.
		On $B_\e^+$ we use the change of variables $x= \delta_\e(y+ t\nu_u(y))$ for $y\in S_u\cap Q^\nu_\rho$  and $t\defas\frac{\delta_\e}{\e}\big(\d_{S_u}(\frac{x}{\delta_\e})-\frac{\xi_\e}{\delta_\e}\big)$. Note that in this way we have $\overline\nu(x/\delta_\e)=\nu_u(y) $. Since also $|\delta_\e\nabla\d_{S_u}(x/\delta_\e)|=1$, using the Coarea Formula on the right hand-side of \eqref{eq:FB_1} restricted to $B_\e^+$, we get
			\begin{equation}\label{eq:FB_2}
			\begin{split}
				\F_\e^{s}(\bar v_\e,B_\e^+)
				\leq&\left(1+ O\left(\frac{\e}{\delta_\e}\right)\right)\delta_\e^{n-1}\int_0^{\sqrt{c_4}T_\eta}\int_{{Q^\nu_{1/\delta_\e}}\cap S_u}\Biggl(1-v_\eta\Biggl(\frac{t}{\Psi\Big(y+\frac{\e}{\delta_\e}t\nu_u(y),\nu_u(y)\Big)}\Biggr)	\Biggr)^2
				\\
				&\qquad\qquad\qquad\qquad\qquad\qquad\qquad+\Biggl(v'_\eta\Biggl(\frac{t}{\Psi\Big(y+\frac{\e}{\delta_\e}t\nu_u(y),\nu_u(y)\Big)}\Biggr)\Biggr)^2\dHn(y)\dt\,.
			\end{split}
		\end{equation}
	Now let
		\begin{equation}\label{change_variable}
			s\defas\frac{t}{\Psi\Big(y+\frac{\e}{\delta_\e}t\nu_u(y),\nu_u(y)\Big)}\,,
		\end{equation}
	so that by \eqref{eq:estimates_psi} there holds
	\begin{equation*}
\frac{ds}{d t}=\frac{\Psi\Big(y+\frac{\e}{\delta_\e}t\nu_u(y),\nu_u(y)\Big)-t\Psi_x\Big(y+\frac{\e}{\delta_\e}t\nu_u(y),\nu_u(y)\Big)\frac{\e}{\delta_\e}\nu_u(y)}{\Psi^2\Big(y+\frac{\e}{\delta_\e}t\nu_u(y),\nu_u(y)\Big)}=\frac{1+O\left(\frac{\e}{\delta_\e}\right)}{\Psi\Big(y+\frac{\e}{\delta_\e}t\nu_u(y),\nu_u(y)\Big)}\,.
	\end{equation*}
Moreover, by \ref{hyp:Lip-h} we deduce that for every $t\in (0,\sqrt{c_4}T_\eta)$ we have
\begin{equation*}
\Psi\Big(y+\frac{\e}{\delta_\e}t\nu_u(y),\nu_u(y)\Big)=\Psi(y,\nu_u(y))+O\left(\frac{\e}{\delta_\e}\right)\,.
\end{equation*}
Hence using the change of variables in~\eqref{change_variable} and applying Fubini's Theorem in~\eqref{eq:FB_2} we infer
		\begin{equation}\label{eq:surf_term_3}
			\begin{split}
				\F_\e^{s}(\bar v_\e,B_\e^+)&
				\le \frac{1+ O\big(\frac{\e}{\delta_\e}\big)}{1+O\big(\frac{\e}{\delta_\e}\big)}
				\delta_\e^{n-1}C_\eta
				\int_{{Q^\nu_{1/\delta_\e}}\cap S_u}\bigg(
				\Psi(y,\nu_u(y))+O\Big(\frac{\e}{\delta_\e}\Big)\bigg)\dHn(y)
					\,,
			\end{split}
		\end{equation} 
	where $C_\eta$  is defined in \eqref{eq:c_eta}. 
	Moreover, a similar estimate can be obtained on $B_\e^-$ using the change of variables $x= \delta_\e(y- t\nu_u(y))$ for $y\in S_u\cap Q^\nu_\rho$.
	Thus, by the periodicity of $u$ and using~\eqref{inclusion:Su}, from \eqref{eq:surf_term_3} together with the periodicity of $\Psi$ and the fact that $rR_\nu\in\Z^{n\times n}$ we deduce 
			\begin{equation}\label{eq:surf_term_4}
		\begin{split}
			\F_\e^{s}(v_\e,B_\e)&
			\le \frac{2+ O\big(\frac{\e}{\delta_\e}\big)}{1+O\big(\frac{\e}{\delta_\e}\big)}
	\frac	{	\left(1+\delta_\e r\right)^{n-1}}{r^{n-1}}
			C_\eta
			\int_{{Q^\nu_{r}}\cap S_u}\bigg(
			\Psi(y,\nu_u(y))+O\Big(\frac{\e}{\delta_\e}\Big)\bigg)\dHn(y)
			\,.
		\end{split}
	\end{equation} 
	Thus, thanks to~\eqref{eq:c_eta}, \eqref{est:Fk-surf-subcrit}, and~\eqref{eq:surf_term_4} we obtain
	\begin{equation*}
\limsup_{\e\to0}\F_\e(\bar u_\e,\bar v_\e,Q^\nu)\le (1+\eta)(g^0_{\rm hom}(\nu)+\eta)\,.
	\end{equation*}
	 Eventually, \eqref{claim} follows by the arbitrariness of $\eta>0$, using a diagonal argument.
	\end{proof}
	\begin{remark}
	We notice that Step 1 in the proof of Proposition~\ref{prop:supercritical_case} could have been established also without using the asymptotic minimisation formula for $\hat{g}^0$ in~\eqref{eq:densities-sur} and instead using a similar argument as in~\cite[Theorem 17]{BCS} now appealing to the homogenisation result~\cite[Theorem 1]{Sol}. Moreover, Step 1 holds true also when $\delta_\e\ll\e$ and $\delta_\e\sim\e$. In particular, we have $g_\hom^0\leq g_\hom^\ell$ for any $\ell\in(0,+\infty]$.
	\end{remark}
	\section{Oscillations on the same scale as the singular perturbation}\label{sec:same}
	In this section we characterise $\hat g^\ell$ and $g^\ell$ in the regime $\ell \in (0,+\infty)$; the latter corresponds to the case where the scale of the oscillations $\delta_\e$ is comparable to the scale of the singular perturbation $\e$.
	\begin{proposition}\label{prop:critical_case}
	Assume that $\ell\in (0,+\infty)$. Let $g^\ell_\hom$ and $\hat g^\ell$ be as in~\eqref{g_hom_critical} and~\eqref{eq:densities-sur}, respectively. Let $g^\ell$ be the surface integrand in~\eqref{tilde-F}.
	Then for every $(\zeta,\nu)\in\R_0^m\times\S^{n-1}$ we have
	\begin{equation*}
		g^\ell(\zeta,\nu)=\hat g^\ell(\nu)=g_\hom^\ell(\nu)\,.
	\end{equation*} 
	\end{proposition}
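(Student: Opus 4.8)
The plan is to sandwich $g^\ell(\zeta,\nu)$ between $\hat g^\ell(\nu)$ and $g_\hom^\ell(\nu)$. Since $g^\ell(\zeta,\nu)\ge\hat g^\ell(\nu)$ is already recorded in~\eqref{est:lb:f-g-tilde}, it suffices to prove the two estimates
\[
\hat g^\ell(\nu)\ge g_\hom^\ell(\nu)\qquad\text{and}\qquad g^\ell(\zeta,\nu)\le g_\hom^\ell(\nu);
\]
chained with~\eqref{est:lb:f-g-tilde} they give $g_\hom^\ell(\nu)\ge g^\ell(\zeta,\nu)\ge\hat g^\ell(\nu)\ge g_\hom^\ell(\nu)$, hence the full chain of equalities, and in particular the $\zeta$-independence of $g^\ell$. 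Throughout $(\F_\e)$ denotes the $\Gamma$-converging subsequence of Theorem~\ref{thm:gamma_limit}.

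\emph{Lower bound on $\hat g^\ell$.} Fix $\nu$, $\e>0$, $\rho>2\e$ and $v\in\Adm(\bar u_\e^\nu,Q_\rho^\nu)$ with associated potential $u$. I would rescale by $\e$: setting $\tilde v(y)\defas v(\e y)$, $\tilde u(y)\defas u(\e y)$ on $Q_{\rho/\e}^\nu$ and using the $2$-homogeneity of $h$ in its second argument (hypothesis~\ref{hyp:hom-h}) to make the two terms of $\m_\e^s$ carry the same power of $\e$, one obtains
\[
\frac{1}{\rho^{n-1}}\int_{Q_\rho^\nu}\bigg(\frac{(1-v)^2}{\e}+\e h\Big(\frac{x}{\delta_\e},\nabla v\Big)\bigg)\dx=\frac{1}{r_\e^{n-1}}\int_{Q_{r_\e}^\nu}\Big((1-\tilde v)^2+h\big(\tfrac{\e}{\delta_\e}\,y,\nabla\tilde v\big)\Big)\dy,\qquad r_\e\defas\rho/\e,
\]
with $(\tilde u,\tilde v)=(\bar u^\nu,\bar v^\nu)$ near $\partial Q_{r_\e}^\nu$ and $\tilde v\nabla\tilde u=0$. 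Since $r_\e\to+\infty$, the right-hand side is, up to replacing $\tfrac{\e}{\delta_\e}$ by $\ell$, exactly the quantity whose limit defines $g_\hom^\ell(\nu)$ in~\eqref{g_hom_critical}. To justify that replacement without any continuity of $h$ in $x$ (none is assumed for $\ell\in(0,+\infty)$, since~\ref{hyp:Lip-h} is not used in this regime) I would perform the further rescaling $y\mapsto\lambda_\e y$ with $\lambda_\e\defas\tfrac{\e}{\ell\delta_\e}\to1$: as $h(\tfrac{\e}{\delta_\e}y,\cdot)=h(\ell\lambda_\e y,\cdot)$, this turns the oscillation into $h(\ell\,\cdot\,,\cdot)$ exactly (again by $2$-homogeneity), at the cost of multiplicative factors $\lambda_\e^{-n},\lambda_\e^{2-n}\to1$ and of a boundary-layer correction near $\partial Q^\nu$ of relative size $O(1/r_\e)$, which is legitimate because $\bar u^\nu,\bar v^\nu$ are constant outside a fixed slab around $\{x\cdot\nu=0\}$ (property~\ref{1dim-couple}). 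Passing to the infimum in $v$ and then to $\liminf_{\e\to0}$ gives $\rho^{1-n}\liminf_{\e\to0}\m_\e^s(\bar u_\e^\nu,Q_\rho^\nu)\ge g_\hom^\ell(\nu)$ for every $\rho>0$, and~\eqref{eq:densities-sur} yields $\hat g^\ell(\nu)\ge g_\hom^\ell(\nu)$ after letting $\rho\to0$.

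\emph{Upper bound on $g^\ell$: recovery sequence.} By Theorem~\ref{thm:gamma_limit} it is enough to exhibit $(u_\e,v_\e)\to(u_\zeta^\nu,1)$ in $L^0(\R^n;\R^m)\times L^0(\R^n)$ with $\limsup_\e\F_\e(u_\e,v_\e,Q^\nu)\le g_\hom^\ell(\nu)$; by continuity of $g^\ell(\zeta,\cdot)$ and $g_\hom^\ell$ (Remark~\ref{rem:prop-g-tilde} and Remark~\ref{rem:prop-g}~\eqref{rem:g-ell}) I may assume $\nu\in\S^{n-1}\cap\Q^n$. For fixed $\sigma>0$ I would choose a cell scale $r_\e\to+\infty$ with $\e r_\e\to0$ and $\e r_\e$ an integer multiple of $\delta_\e$, so that $x\mapsto h(\tfrac{x}{\delta_\e},\cdot)$ is genuinely $\e r_\e$-periodic, and take a $\sigma$-almost optimal admissible pair $(w_\e,z_\e)$ on $Q_{r_\e}^\nu$ for the cell problem defining $g_\hom^\ell$ but written with the true oscillation $h(\tfrac{\e}{\delta_\e}x,\cdot)$ — which by~\eqref{cond:u-variable} may be taken with $\|w_\e\|_\infty\le1$, $w_\e\in\{0,e_1\}$ on $\{z_\e\neq0\}$ and $w_\e$ of bounded transition gradient — so that, by the $\lambda_\e\to1$ rescaling of the previous step, its value tends to $g_\hom^\ell(\nu)$. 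After extending $(w_\e,z_\e)$ by $r_\e$-periodicity in the directions $R_\nu e_i$, $i=1,\dots,n-1$ (compatibly with $(\bar u^\nu,\bar v^\nu)$, which depends only on $x\cdot\nu$), I set $v_\e(x)\defas z_\e(x/\e)$ and $u_\e(x)\defas(w_\e)_1(x/\e)\,\zeta$, where $(w_\e)_1$ is the first component of $w_\e$. Then $v_\e\nabla u_\e=0$, $(u_\e,v_\e)\to(u_\zeta^\nu,1)$ in $L^0$, and since $\nabla u_\e=0$ where $v_\e\neq0$ and $f(x,0)=0$ by~\ref{hyp:growth-f},
\[
\F_\e(u_\e,v_\e,Q^\nu)\le c_2\,\eta_\e\int_{\{v_\e=0\}}|\nabla u_\e|^2\dx+\F_\e^s(v_\e,Q^\nu).
\]
The first term is $O(\eta_\e/\e)=o(1)$ because $\eta_\e\ll\e$, the transition set of $u_\e$ has measure $O(\e)$, and scaling gives $\int_{\{v_\e=0\}}|\nabla u_\e|^2\dx=O(1/\e)$; the exact periodic tiling of $Q_{1/\e}^\nu$ by translates of $Q_{r_\e}^\nu$ (after $y=x/\e$ and the $2$-homogeneity of $h$) turns $\F_\e^s(v_\e,Q^\nu)$ into the cell value up to an $O(\e r_\e)=o(1)$ contribution of the incomplete cells near $\partial Q^\nu$, controlled by the quadratic growth of $h$, hence $\le g_\hom^\ell(\nu)+\sigma+o(1)$. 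Letting $\sigma\to0$ and removing the rationality of $\nu$ by density concludes.

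The step I expect to be the real obstacle is the identification of the rescaled cell problems with the formula~\eqref{g_hom_critical}: since in this regime $h$ is only assumed periodic, $2$-homogeneous and of quadratic growth in its second variable, with no continuity in the spatial variable, the argument $\tfrac{\e}{\delta_\e}x$ cannot simply be traded for $\ell x$, and the period of $h(\tfrac{x}{\delta_\e},\cdot)$ need not be commensurate with a fixed reference cell; both difficulties are absorbed through the rescaling by $\lambda_\e=\e/(\ell\delta_\e)\to1$ and the commensurate choice of $r_\e$, which are available precisely because $h$ is $2$-homogeneous. The remaining ingredients — the $L^0$-convergence of the constructed sequences, the bulk term bounded via $\eta_\e\ll\e$, and the reduction to rational directions — run along the lines already seen for $\ell=0$ in Proposition~\ref{prop:supercritical_case}.
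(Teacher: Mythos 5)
Your overall sandwiching strategy ($\hat g^\ell\ge g^\ell_\hom\ge g^\ell(\zeta,\cdot)\ge\hat g^\ell$) is exactly the paper's plan, and your Step~1 is essentially the paper's lower bound, just presented indirectly. The paper first invokes Remark~\ref{rem:f-g-tilde} (which rests on \cite[Proposition 2.6]{BMZ}) to replace the boundary datum $\bar u^\nu_\e$ in~\eqref{eq:densities-sur} by $\bar u^\nu_{\ell\delta_\e}$; then the \emph{single} change of variables $y=x/(\ell\delta_\e)$ turns $h(x/\delta_\e,\cdot)$ into exactly $h(\ell y,\cdot)$, the boundary data into exactly $(\bar u^\nu,\bar v^\nu)$, and leaves only the scalar factor $\gamma_\e:=\min\{\ell\delta_\e/\e,\e/(\ell\delta_\e)\}\to1$. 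Your two-step rescaling by $\e$ and then by $\lambda_\e=\e/(\ell\delta_\e)$ arrives at the same quantity, but the $\lambda_\e$ step distorts $(\bar u^\nu,\bar v^\nu)$ into $(\bar u^\nu(\cdot/\lambda_\e),\bar v^\nu(\cdot/\lambda_\e))$, which differ from $(\bar u^\nu,\bar v^\nu)$ on the strip $\{|z\cdot\nu|<\max\{1,\lambda_\e\}\}$ meeting the lateral faces of the cube; cutting away that boundary layer needs an additional gluing argument that the change of datum makes unnecessary.

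Step~2 has a genuine gap in the bulk estimate. You build the recovery sequence from $\e$-dependent cell competitors $(w_\e,z_\e)$ on cubes $Q^\nu_{r_\e}$ with $r_\e\to+\infty$, $\e r_\e\to0$, and set $u_\e(x)=(w_\e)_1(x/\e)\zeta$, $v_\e(x)=z_\e(x/\e)$. After tiling $Q^\nu$ by $\sim(\e r_\e)^{1-n}$ translates of the period cell and changing variables, the first term of $\F_\e$ satisfies
\begin{equation*}
\eta_\e\int_{Q^\nu}|\nabla u_\e|^2\dx\;\lesssim\;\frac{\eta_\e}{\e}\cdot\frac{1}{r_\e^{n-1}}\int_{Q^\nu_{r_\e}}|\nabla w_\e|^2\dy\,.
\end{equation*}
The prefactor $\eta_\e/\e\to0$, but $\frac{1}{r_\e^{n-1}}\int_{Q^\nu_{r_\e}}|\nabla w_\e|^2\dy$ is not controlled: the cell problem~\eqref{g_hom_critical} is an infimum over $z_\e$ subject to the mere \emph{existence} of some $w_\e\in W^{1,2}$ with $z_\e\nabla w_\e=0$, and neither the energy bound on $z_\e$ nor~\eqref{cond:u-variable} gives a uniform $L^2$ gradient bound on $w_\e$ (the transition of $w_\e$ is confined to $\{z_\e=0\}$, whose thickness is not controlled, so $|\nabla w_\e|$ may be arbitrarily large on a correspondingly thin set). ``Bounded transition gradient'' is therefore a property you cannot assume. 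The paper sidesteps this entirely: for fixed $\eta>0$ it chooses a \emph{fixed} $r\in\frac{m_\nu}{\ell}\N^*$ and a \emph{fixed} almost-optimal pair $(u,v)$ with $v\in\Adm(\bar u^\nu,Q^\nu_r)$ for the limiting integrand $h(\ell x,\cdot)$, so $\int_{Q^\nu_r}|\nabla u|^2\dx$ is a finite constant; the recovery pair is the rescaling $\bar u_\e(x)=(u(x/(\ell\delta_\e))\cdot e_1)\zeta$, $\bar v_\e(x)=v(x/(\ell\delta_\e))$, whose bulk contribution is $\lesssim\frac{\eta_\e}{\ell\delta_\e}\cdot\frac{1}{r^{n-1}}\int_{Q^\nu_r}|\nabla u|^2\dx\to0$ since $\eta_\e\ll\e\sim\delta_\e$. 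A secondary point: requiring only that $\e r_\e$ be an integer multiple of $\delta_\e$ is not enough for the tangential periodic extension to preserve the periodicity of $h(x/\delta_\e,\cdot)$ in the directions $R_\nu e_i$; you also need $\e r_\e/\delta_\e$ to be a multiple of $m_\nu$ (the paper's choice $r\in\frac{m_\nu}{\ell}\N^*$ gives precisely $\ell r R_\nu(z,0)\in\Z^n$).
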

	\begin{proof}
		Not to overburden notation, in all that follows $(\F_\e)$ denotes the $\Gamma$-converging subsequence given by Theorem~\ref{thm:gamma_limit}. 
		
		We recall that in view of Remark~\ref{rem:f-g-tilde} we have
		\begin{equation}\label{eq:g-hat-nu}
		\hat	g^\ell(\nu)= \limsup_{\rho\to0}\frac{1}{\rho^{n-1}}\limsup_{\e\to 0}\m^{s}_{\e}( \bar{u}_{\ell\delta_\e}^\nu,Q^\nu_\rho)= \limsup_{\rho\to0}\frac{1}{\rho^{n-1}}\liminf_{k\to\infty}\m^{s}_{\e}(\bar u^\nu_{\ell\delta_\e},Q^\nu_\rho)\,,
		\end{equation}
		for every $\nu\in\S^{n-1}$. It is also convenient to introduce the notation
		\begin{equation*}
		\m^{\ell,s}(\bar u^\nu,Q_r^\nu)\defas\inf\Bigg\{\int_{Q_r^\nu}\big((1-v)^2+h(\ell x,\nabla v)\big)\dx\colon v\in\Adm(\bar{u}^\nu,Q_r^\nu)\Bigg\}\,,
		\end{equation*}
		where $\Adm(\bar{u}^\nu,Q_r^\nu)$ is defined according to~\eqref{def_A}. Therefore $g_\hom^\ell$ can be rewritten as
		\begin{equation}\label{eq:g-hom-crit}
		g_\hom^\ell(\nu)=\lim_{r\to+\infty}\frac{1}{r^{n-1}}\m^{\ell,s}(\bar{u}^\nu,Q_r^\nu)\,.
		\end{equation}
		We notice that in view of~\eqref{est:lb:f-g-tilde}, to prove the claim it suffices to show that
		\begin{equation}\label{est:ghom-crit}
		\hat{g}^\ell(\nu)\geq g_\hom^\ell(\nu)\geq{g}^\ell(\zeta,\nu),\quad\forall\ (\zeta,\nu)\in\R^m_0\times\S^{n-1}\,.
		\end{equation}
		The proof of~\eqref{est:ghom-crit} will be split into two steps.
		
		\medskip
		\step 1 $\hat{g}^\ell(\nu)\geq g_\hom^\ell(\nu)$, for every $\nu\in\S^{n-1}$.
		
		\smallskip
		Let $\nu\in\S^{n-1}$, $\e>0$, $\rho>2\e$, and let $v\in\mathscr A(\bar u^\nu_{\ell\delta_\e},Q_\rho^\nu) $ be arbitrary. 
Then, there exists $u\in W^{1,2}(Q_\rho^\nu;\R^m)$ such that 
\begin{equation}\label{cond:v-crit}
v\nabla u=0\ \text{a.e. in}\ Q_\rho^\nu\quad\text{and}\quad (u,v)=(\bar u^\nu_{\ell\delta_\e},\bar v^\nu_{\ell\delta_\e})\ \text{near}\ \partial Q^\nu_\rho\,.
\end{equation} 
Set $r_\e\defas\frac{\rho}{\ell\delta_\e}$ and define $(u_\e,v_\e)\subset W^{1,2}(Q_{r_\e}^\nu;\R^m)\times W^{1,2}(Q_{r_\e}^\nu)$ by setting 
\[
(u_\e(x),v_\e(x))\defas(u(\ell\delta_\e x),v(\ell\delta_\e x)).
\] 
Then~\eqref{cond:v-crit} implies that 
\begin{equation*}
v_\e \nabla u_\e=0\ \text{a.e.\ in }\ Q^\nu_{r_\e}\ \text{ and }\ (u_\e,v_\e)=(\bar u^\nu,\bar v^\nu)\ \text{near}\ \partial Q^\nu_{r_\e}\,,
\end{equation*}
that is, $v_\e\in\Adm(\bar u^\nu,Q_{r_\e}^\nu)$. Thus, a change of variables gives 
	\begin{equation}\label{est:m-ell-s}
	\begin{split}
\frac{1}{\rho^{n-1}}\int_{Q_\rho^\nu}\bigg(\frac{(1-v)^2}{\e}+\e h\Big(\frac{x}{\delta_\e}\nabla v\Big)\bigg)\dx &=
\frac{(\ell\delta_\e)^{n-1}}{\rho^{n-1}}\int_{Q^\nu_{r_\e}}\left(\frac{\ell\delta_\e}{\e}(1-v_\e)^2+\frac{\eps}{\ell\delta_\e} h\left(\ell x,\nabla v_\e\right)\right)\dx\\
&\geq\frac{\gamma_\e}{r_\e^{n-1}} \m^{\ell,s}\big(\bar{u}^\nu,Q_{r_\e}^\nu\big)\,,
\end{split}
	\end{equation}
	where
	\begin{equation*}
	\gamma_\e\defas\min\left\{\frac{\ell\delta_\e}{\e},\frac{\e}{\ell\delta_\e}\right\}\ \to 1\quad\text{as }\; \e\to 0\,.
	\end{equation*}
	Hence, since $v\in\Adm(\bar{u}_{\ell\delta_\e}^\nu,Q_\rho^\nu)$ was arbitrarily chosen, we can pass to the infimum on the left-hand side of~\eqref{est:m-ell-s} and let $\e\to 0$ to deduce that
	\begin{equation*}
	\frac{1}{\rho^{n-1}}\liminf_{\e\to 0}\m_\e^s\big(\bar{u}_{\ell\delta_\e}^\nu, Q_\rho^\nu\big)\geq\lim_{\e\to 0}\frac{1}{r_\e^{n-1}}\m^{\ell,s}\big(\bar{u}^\nu, Q_{r_\e}^\nu\big)=g_\hom^\ell(\nu)\,,
	\end{equation*}
	where the last equality follows from~\eqref{eq:g-hom-crit}. In view of~\eqref{eq:g-hat-nu} we then conclude the proof of Step 1 by letting $\rho\to 0$.
	
	\medskip
	\step 2 ${g}^\ell(\zeta,\nu)\leq g_\hom^\ell(\nu)$, for every $(\zeta,\nu)\in\R_0^m\times\S^{n-1}$.
	
	\smallskip
	Let $(\zeta,\nu)\in\R^m_0\times\S^{n-1}$; by Theorem \ref{thm:gamma_limit} we have that 
		\begin{equation}\label{est:tilde-g-crit}
g^\ell(\zeta,\nu)=\F^\ell(u_\zeta^\nu,1,Q^\nu)\le\liminf_{\e\to0}\F_\e(u_\e,v_\e,Q^\nu)\,,
		\end{equation}
		for any sequence $(u_\e,v_\e)$ converging to $(u_\zeta^\nu,1)$ in $L^0(\R^n;\R^m)\times L^0(\R^n)$. 
		
		Let $\eta>0$ be fixed, in what follows we construct a sequence $(\bar u_\e,\bar v_\e)$ converging to $(u_\zeta^{\nu},1)$ in $L^0(\R^n;\R^m)\times L^0(\R^n)$ and such that 
		\begin{equation}\label{c:claim}
\limsup_{\e\to0}\F_\e(\bar u_\e,\bar v_\e,Q^\nu)\le g^\ell_{\rm hom}(\nu)+\eta\,.
		\end{equation}
		Then, we can conclude by combining~\eqref{est:tilde-g-crit} and~\eqref{c:claim} and by the arbitrariness of $\eta>0$.

We notice that since both $g_\hom^\ell$ and ${g}^\ell(\zeta,\cdot)$ are continuous (\cf Remark~\ref{rem:prop-g}~\eqref{rem:g-ell} and Remark~\ref{rem:prop-g-tilde}) we can prove the desired inequality only for $\nu\in\S^{n-1}\cap\Q^n$ and then conclude by density. Let $\nu \in\S^{n-1}\cap\Q^n$ and let $R_\nu\in \Q^{n\times n}$ be an orthogonal matrix as in~\ref{Rn} with $R_\nu e_n=\nu$, and let $m_\nu\in\N^*$ be such that $m_\nu R_\nu\in\Z^{n\times n}$. 
	
By~\eqref{eq:g-hom-crit} we find $r\in \frac{m_\nu}{\ell}\N^*$ and $v\in\Adm(\bar{u}^\nu, Q_{r}^\nu)$ such that
	\begin{equation}\label{almost-optimal-ghomell}
	\frac{1}{r^{n-1}}\int_{Q_{r}^\nu}\big((1-v)^2+h(\ell x,\nabla v)\big)\dx\leq g_\hom^\ell(\nu)+\eta\,.
	\end{equation}
	By definition of $\Adm(\bar{u}^\nu, Q_{r}^\nu)$, there exists $u\in W^{1,2}(Q_{r}^\nu;\R^m)$ such that
	\begin{equation}\label{cond:uv-ub-crit}
	v \nabla u=0\ \text{a.e.\ in }\ Q^\nu_{r}\ \text{ and }\ (u,v)=(\bar u^\nu,\bar v^\nu)\ \text{near}\ \partial Q^\nu_{r}\,.
	\end{equation}
	For $\lambda>0$ we introduce
	\begin{equation*}
	S_\lambda^\nu\defas\bigg\{x\in\R^n\colon|x\cdot\nu|<\frac{\lambda}{2}\bigg\}\,
	\end{equation*}
	the strip of width $\lambda$ around the hyperplane $\Pi^\nu$.
	We then extend $(u,v)$ $r$-periodically inside the strip $S_r^\nu$ by setting
	\begin{equation}\label{extension-uv}
	\big(u(x),v(x)\big)\defas \big(u(x-R_\nu(rz,0)),v(x-R_\nu(rz,0))\big)\; \text{ if }\; x\in Q_{r}^\nu(R_\nu(rz,0))\,,\ z\in\Z^{n-1}\,, 
	\end{equation}
	and we set $(u,v)\defas(u^\nu,1)$ in $\R^n\setminus S_r^\nu$. Then the second condition in~\eqref{cond:uv-ub-crit} ensures that $(u,v)\in W^{1,2}_\loc(\R^n;\R^m)\times W^{1,2}_\loc(\R^n)$. Eventually, we define $(\bar u_\e,\bar v_\e)\in W^{1,2}_\loc(\R^n;\R^m)\times W^{1,2}_\loc(\R^n)$ by setting 
	\begin{equation*}
	\bar u_\e(x)\defas \Big(u\Big(\frac{x}{\ell\delta_\e}\Big)\cdot e_1\Big)\zeta\quad\text{and}\quad \bar v_\e(x)\defas v\Big(\frac{x}{\ell\delta_\e}\Big)\,,
	\end{equation*}
	and we set $r_\e\defas\ell\delta_\e r\in \delta_\e m_\nu\N^*$.
	In this way, $\bar u_\e$ and $\bar v_\e$ are $r_\e$-periodic inside $S_{r_\e}^\nu$ and for $x\in\R^n\setminus S_{r_\e}^\nu$ we have $\bar u_\e(x)=\big(u^\nu\big(\frac{x}{\ell\delta_\e}\big)\cdot e_1\big)\zeta=u_\zeta^\nu(x)$ and $\bar v_\e(x)=1$. Thus, $(\bar u_\e,\bar v_\e)\to (u_\zeta^\nu,1)$ in $\LtL$. 
	To estimate $F_\e(\bar u_\e,\bar v_\e, Q^\nu)$ we note that $\bar v_\e\nabla \bar u_\e=0$ a.e.\! in $Q^\nu$ thanks to~\eqref{cond:uv-ub-crit}. Moreover, $\nabla \bar u_\e=0$ on $\R^n\setminus S_{r_\e}^\nu$ and $|\nabla \bar u_\e(x)|\leq\frac{1}{\ell\delta_\e}\big|\nabla \bar u\big(\frac{x}{\ell\delta_\e}\big)\big||\zeta|$ in $S_{r_\e}^\nu$. Thus, \ref{hyp:growth-f} together with a change of variables give
	\begin{equation}\label{est:bulk-crit-1}
	\int_{Q^\nu}(\bar v_\e^2+\eta_\e)f\Big(\frac{x}{\delta_\e},\nabla \bar u_\e\Big)\dx\leq c_2\eta_\e\int_{Q^\nu\cap S_{r_\e}^\nu}|\nabla \bar u_\e|^2\dx\leq c_2|\zeta|^2(\ell\delta_\e)^{n-2}\eta_\e\int_{Q_{1/(\ell\delta_\e)}^\nu\cap S_r^\nu}|\nabla u|^2\dx\,.
	\end{equation}
	By setting $J_\e^\nu\defas\{z\in \Z^{n-1}\colon Q_{r}^\nu(R_\nu(rz,0))\cap Q_{1/(\ell\delta_\e)}^\nu\neq \emptyset\}$ and using~\eqref{extension-uv} we obtain
	\begin{equation*}
	\int_{Q_{1/(\ell\delta_\e)}^\nu\cap S_r^\nu}|\nabla u|^2\dx\leq\sum_{z\in J_r^\nu}\int_{Q_{r}^\nu(R_\nu(rz,0))}|\nabla u|^2\dx=\#(J_\e^\nu)\int_{Q_{r}^\nu}|\nabla u|^2\dx\,.
	\end{equation*}
	Thus, since 
	\begin{equation}\label{est:Irnu}
	\#(J_\e^\nu)\leq(\lfloor 1/r_\e\rfloor+1)^{n-1}\leq\Big(\frac{1+r_\e}{r_\e}\Big)^{n-1}\,,
	\end{equation}
	from~\eqref{est:bulk-crit-1} we infer
	\begin{equation}\label{est:bulk-crit-2}
	\int_{Q^\nu}(\bar v_\e^2+\eta_\e)f\Big(\frac{x}{\delta_\e},\nabla \bar u_\e\Big)\dx\leq c_2|\zeta|^2\frac{(1+r_\e)^{n-1}\eta_\e}{\ell\delta_\e}\frac{1}{r^{n-1}}\int_{Q_{r}^\nu}|\nabla u|^2\dx\ \to 0\quad\text{as}\ \e\to 0\,,
	\end{equation}
	where the convergence to zero follows from the fact that $\eta_\e\ll\e$ and $\delta_\e\sim\e$.

	To conclude it only remains to estimate $\F_\e^s(\bar v_\e,Q^\nu)$. Since $\bar v_\e\equiv 1$ on $Q^\nu\setminus S_{r_\e}^\nu$, from~\ref{hyp:growth-h} and a change of variables we deduce that
	\begin{equation}\label{est:Fe-surf-crit1}
	\begin{split}
	\F_\e^s(\bar v_\e,Q^\nu) &=\F_\e^s(\bar v_\e,Q^\nu\cap S_{r_\e}^\nu)=(\ell\delta_\e)^n\int_{Q_{1/(\ell\delta_\e)}^\nu\cap S_r^\nu}\bigg(\frac{(1-v)^2}{\e}+\frac{\e}{(\ell\delta_\e)^2} h(\ell x,\nabla v)\bigg)\dx\\
	&\leq\tilde{\gamma}_\e(\ell\delta_\e)^{n-1}\int_{Q_{1/(\ell\delta_\e)}^\nu\cap S_r^\nu}\big((1-v)^2+h(\ell x,\nabla v)\big)\dx\,,
	\end{split}
	\end{equation}
	where
	\begin{equation}\label{cond:tilde-gammae}
	\tilde{\gamma}_\e\defas\max\left\{\frac{\ell\delta_\e}{\e},\frac{\e}{\ell\delta_\e}\right\}\ \to 1\quad\text{as }\; \e\to 0\,.
	\end{equation}
	Eventually, in view of~\eqref{extension-uv} and~\ref{hyp:per-h} we have
	\begin{equation*}
	\begin{split}
	\int_{Q_{1/(\ell\delta_\e)}^\nu\cap S_r^\nu}\big((1-v)^2+h(\ell x,\nabla v)\big)\dx &\leq\sum_{z\in J_\e^\nu}\int_{Q_{r}^\nu(R_\nu(rz,0))}\big((1-v)^2+h(\ell x,\nabla v)\big)\dx\\
	&=\sum_{z\in J_\e^\nu}\int_{Q_{r}^\nu}\Big((1-v)^2+h\big(\ell x+\ell rR_\nu(z,0),\nabla v\big)\Big)\dx\\
	&=\#(J_\e^\nu)\int_{Q_{r}^\nu}\big((1-v)^2+h(\ell x,\nabla v)\big)\dx\,,
	\end{split}
	\end{equation*}
	where in the last step we also used that $\ell r\in m_\nu\N^*$, hence $\ell rR_\nu(z,0)\in\Z^{n}$ by the choice of $m_\nu$. Thus, using again the estimate on $\#(J_\e^\nu)$ in~\eqref{est:Irnu}, from~\eqref{est:Fe-surf-crit1} we deduce that
	\begin{equation}\label{est:Fe-surf-crit2}
	\F_\e^s(\bar v_\e, Q^\nu)\leq\tilde{\gamma_\e}(1+r_\e)^{n-1}\frac{1}{r^{n-1}}\int_{Q_{r}^\nu}\big((1-v)^2+h(\ell x,\nabla v)\big)\dx
	\end{equation}
	Eventually, by combining~\eqref{almost-optimal-ghomell}, \eqref{est:bulk-crit-2}, \eqref{cond:tilde-gammae}, and \eqref{est:Fe-surf-crit2} we obtain
	\begin{equation*}
	\limsup_{\e\to 0}\F_\e(\bar u_\e,\bar v_\e, Q^\nu)=\limsup_{\e\to 0}\F_\e^s(\bar v_\e,Q^\nu)\leq g_\hom^\ell(\nu)+\eta\,,
	\end{equation*}
	hence~\eqref{c:claim} is proven and thus the claim.
	\end{proof}
		\section{Oscillations on a finer scale than the singular perturbation}\label{sec:finer}
		In this section we characterise $g^\ell$ in the regime $\ell=+\infty$; the latter corresponds to the case where the scale of the oscillations $\delta_\e$ is much smaller than the scale of the singular perturbation $\e$.

\medskip
		
		The following Lemma is a consequence of some analogous results established in~\cite{ABC01} in the context of the homogenisation of Modica-Mortola functionals and will be used in the proof of Proposition \ref{prop:subcritical:case} below. 
	\begin{lemma}\label{lem:ABC}
	Let $\sigma>0$ be fixed and let $A\in\A$. Then there exists $K=K(\sigma)\in\N$ and a constant $c>0$ (depending only on the space dimension) such that for any sequence $(v_\e)\subset W^{1,2}_\loc(\R^n)$ with $\sup_\e \F_\e^s(v_\e,A)<+\infty$ the functions $v_\e^\sigma\in W_\loc^{1,2}(\R^n)$ defined as
	\begin{equation}\label{def:v-eta-ABC}
	v_\e^\sigma(x)\defas\frac{1}{(K\delta_\e)^n}\int_{Q_{K\delta_\e}(x)}v_\e(y)\dy
	\end{equation}
	satisfy the following estimates
	\begin{equation}\label{est:v-eta-ABC}
	\int_A h\bigg(\frac{x}{\delta_\e},\nabla v_\e\bigg)\dx\geq\int_A h_\hom(\nabla v_\e^\sigma)\dx-\sigma\int_{A_{K\delta_\e}}|\nabla v_\e|^2\dx\,,
	\end{equation}
	\begin{equation}\label{est:L2-norm-v-eta}
	\int_A |v_\e-v_\e^\sigma|^2\dx\leq c(K\delta_\e)^2\int_{{A}_{K\delta_\e}}|\nabla v_\e|^2\dx\,,
\end{equation}
	where $h_\hom$ is as in \eqref{h_hom_subcritical} and  ${A}_{K\delta_\e}\defas\Big\{x\in\R^n\colon\dist(x,A)<\frac{(2+\sqrt{n})\sqrt{n}}{2}K\delta_\e\Big\}$. 
\end{lemma}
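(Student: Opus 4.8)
The plan is to prove the two estimates separately. The $L^2$-bound~\eqref{est:L2-norm-v-eta} is elementary: for a.e.\ $x$ one writes $v_\e(x)-v_\e^\sigma(x)=\frac{1}{(K\delta_\e)^n}\int_{Q_{K\delta_\e}(x)}(v_\e(x)-v_\e(y))\dy$, applies Jensen's inequality to pull the square inside the average, and estimates $|v_\e(x)-v_\e(y)|^2\le|x-y|^2\int_0^1|\nabla v_\e(x+t(y-x))|^2\dt$ along the segment $[x,y]$. Substituting $y=x+K\delta_\e h$ with $h\in Q$, integrating over $x\in A$, and using Fubini together with the translation estimate $\int_A|\nabla v_\e(x+tK\delta_\e h)|^2\dx\le\int_{A_{K\delta_\e}}|\nabla v_\e|^2\dx$ (legitimate since $|tK\delta_\e h|\le\frac{\sqrt n}{2}K\delta_\e$ and $\frac{\sqrt n}{2}\le\frac{(2+\sqrt n)\sqrt n}{2}$) yields~\eqref{est:L2-norm-v-eta} with $c=c(n)$; one may take $c=n/12$, the value of $\int_Q|h|^2\d h$. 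This step is routine.

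The substantive estimate is~\eqref{est:v-eta-ABC}, and my plan follows~\cite{ABC01}. The core ingredient is a \emph{uniform one-cell homogenisation bound}: for every $\sigma>0$ there is $K=K(\sigma,n,c_3,c_4)\in\N$ such that for all $\e$, all $x_0\in\R^n$ and all $v\in W^{1,2}(Q_{K\delta_\e}(x_0))$,
\[
\int_{Q_{K\delta_\e}(x_0)}h\Big(\frac{y}{\delta_\e},\nabla v\Big)\dy\ \ge\ (K\delta_\e)^n\,h_\hom\Big(\frac{1}{(K\delta_\e)^n}\int_{Q_{K\delta_\e}(x_0)}\nabla v\,\dy\Big)-\sigma\int_{Q_{K\delta_\e}(x_0)}|\nabla v|^2\dy\,.
\]
I would prove this by a compactness/contradiction argument as in~\cite{ABC01}. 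After rescaling $Q_{K\delta_\e}(x_0)$ to the unit cube $Q$ it amounts to showing that, as the number of periods $K\to\infty$, $\int_Q h(a+Kz,\nabla w)\dz\ge h_\hom\big(\int_Q\nabla w\,\dz\big)-\sigma\int_Q|\nabla w|^2\dz$ uniformly in $a\in\R^n$ and $w\in W^{1,2}(Q)$. If this failed there would be $K_j\to\infty$, translations $a_j$ and maps $w_j$ violating it; by the $2$-homogeneity of $h$ one may normalise $\int_Q|\nabla w_j|^2\dz=1$, so $\int_Q\nabla w_j\,\dz$ stays bounded, and after subtracting the means and extracting a weak $W^{1,2}(Q)$-limit $w_j\wto w$, the $\Gamma$-lower-semicontinuity inequality for periodic homogenisation (\cf~\cite{Braides85,Mueller87}), Jensen's inequality, and the convexity and continuity of $h_\hom$ (Remark~\ref{rem:prop-g}~\eqref{rem:g-infty}) give $\liminf_j\int_Q h(a_j+K_jz,\nabla w_j)\dz\ge\int_Q h_\hom(\nabla w)\dz\ge h_\hom\big(\int_Q\nabla w\,\dz\big)=\lim_j h_\hom\big(\int_Q\nabla w_j\,\dz\big)$, contradicting a uniform defect $\sigma$; the translations $a_j$ are immaterial in the limit by $Q$-periodicity of $h$.

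Granting this, I would derive~\eqref{est:v-eta-ABC} by applying the one-cell bound on the cube $Q_{K\delta_\e}(x)$ for $x\in A$ — observing that $\nabla v_\e^\sigma(x)=\frac{1}{(K\delta_\e)^n}\int_{Q_{K\delta_\e}(x)}\nabla v_\e\,\dy$ since $v_\e\in W^{1,2}_\loc(\R^n)$ — then integrating the resulting inequality over $x\in A$, dividing by $(K\delta_\e)^n$, and applying Fubini to the two integrals on the left (each becoming an integral over $A_{K\delta_\e}$ of $h(\cdot/\delta_\e,\nabla v_\e)$, resp.\ $|\nabla v_\e|^2$, weighted by $(K\delta_\e)^{-n}|A\cap Q_{K\delta_\e}(\cdot)|\le1$), while the right-hand side produces exactly $\int_A h_\hom(\nabla v_\e^\sigma)\dx$. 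The delicate point — and the reason the error is placed over the enlargement $A_{K\delta_\e}$ with the constant $\frac{(2+\sqrt n)\sqrt n}{2}$ — is reconciling this weighted left-hand integral with $\int_A h(\cdot/\delta_\e,\nabla v_\e)\dx$, i.e.\ controlling the contributions of the boundary layer of $A$ of width $O(K\delta_\e)$, across which the cubes $Q_{K\delta_\e}(x)$ protrude by at most $\frac{\sqrt n}{2}K\delta_\e$ and the gradient of $v_\e^\sigma$ is estimated by the gradient energy of $v_\e$ in $A_{K\delta_\e}$; this boundary-layer bookkeeping is carried out exactly as in the corresponding lemma of~\cite{ABC01}. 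The main obstacle is thus the \emph{uniform} homogenisation bound above — uniform in $\e$ and in the position of the cube — which is what permits a single $K=K(\sigma)$ to work for all small $\e$ simultaneously; once it is in hand, the passage to the averaged field $v_\e^\sigma$ and the geometry follow the scheme of~\cite{ABC01}.
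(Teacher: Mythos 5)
Your derivation of the $L^2$-estimate \eqref{est:L2-norm-v-eta} is correct and, if anything, more direct than the paper's: the paper proves it in Lemma~\ref{lem:est-L2-norm-average} by a decomposition into cubes on a fixed lattice combined with the Poincar\'e inequality and the continuity of the shift operator, whereas your segment-wise fundamental theorem of calculus followed by Jensen and Fubini yields the clean constant $c=n/12$; both routes are legitimate.

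For the substantive estimate \eqref{est:v-eta-ABC} the paper gives no argument at all beyond the citation to \cite[Propositions 4.7--4.9]{ABC01}, and the route you sketch has a genuine gap at the integration step. Once you integrate the one-cell bound over $x\in A$ and divide by $(K\delta_\e)^n$, Fubini turns the first integral on the left into $\int_{\R^n} h(y/\delta_\e,\nabla v_\e(y))\,w(y)\,\dy$ with weight $w(y)=(K\delta_\e)^{-n}\,|A\cap Q_{K\delta_\e}(y)|\in[0,1]$, whose support spills over $\partial A$ into a shell of width $\tfrac{\sqrt n}{2}K\delta_\e$. Splitting into $\int_A h\,w+\int_{A^c} h\,w$ and using $w\le 1$ only gives $\int h\,w\le\int_A h+\int_{A^c\cap A_{K\delta_\e}} h\,w$, and the second summand is controlled, via the growth of $h$, by $c_4\int_{A_{K\delta_\e}\setminus A}|\nabla v_\e|^2$. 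This fixed-constant boundary-layer term cannot be absorbed into the arbitrarily small $\sigma\int_{A_{K\delta_\e}}|\nabla v_\e|^2$ required by the lemma, and nothing in the hypotheses makes the gradient energy concentrated near $\partial A$ negligible. So the ``sliding cube plus Fubini'' bookkeeping, as you set it up, produces an error of the wrong form; removing it is precisely the nontrivial content of \cite{ABC01} (which works with a lattice decomposition rather than sliding cubes) and cannot simply be deferred. A secondary, smaller concern is that your compactness proof of the uniform one-cell bound passes to a limit in the translation parameters $a_j$; without continuity of $h$ in its first variable this limit passage needs justification, and the paper emphasises that assumption~\ref{hyp:Lip-h} is \emph{not} invoked in the regime $\ell=+\infty$ where this lemma is used.
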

\begin{proof}
	Estimate \eqref{est:v-eta-ABC} follows by~\cite[Propositions 4.7--4.9]{ABC01}, while estimate~\eqref{est:L2-norm-v-eta} is an $L^2$-version of the $L^1$-estimate obtained in~\cite[Proposition 4.10]{ABC01} and is a direct consequence of Lemma~\ref{lem:est-L2-norm-average} in the appendix.
\end{proof}

	\begin{proposition}\label{prop:subcritical:case}
	Assume that $\ell=+\infty$; moreover, assume that $\eta_\e \simeq \delta_\e \simeq\e^\alpha$,  for some $\alpha >1$. 
	Let $g^\infty_\hom$ be as in~\eqref{g_hom_subcritical} and let $g^\infty$ be the surface integrand in~\eqref{tilde-F}.
	Then for every $(\zeta,\nu)\in\R_0^m\times\S^{n-1}$ there holds
	\begin{equation*}
		{g}^\infty(\zeta,\nu)=g_\hom^\infty(\nu)\,.
	\end{equation*}
	\end{proposition}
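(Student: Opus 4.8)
The plan is to prove the two inequalities $g^\infty(\zeta,\nu)\ge g_\hom^\infty(\nu)$ and $g^\infty(\zeta,\nu)\le g_\hom^\infty(\nu)$ for every $(\zeta,\nu)\in\R_0^m\times\S^{n-1}$. I will repeatedly use that $\sqrt{h_\hom}$ is a norm on $\R^n$: by Remark~\ref{rem:prop-g}~\eqref{rem:g-infty} the function $h_\hom$ is convex and satisfies a quadratic growth bound as in~\ref{hyp:growth-h}, while by the $2$-homogeneity~\ref{hyp:hom-h} of $h(x,\cdot)$ it is $2$-homogeneous and even, so that $\{h_\hom\le 1\}$ is a bounded, convex, symmetric neighbourhood of the origin whose Minkowski gauge is precisely $\sqrt{h_\hom}$. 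Therefore Remark~\ref{rem:AT} applies with $a=c_1$, $p=2$, $b=1$, $\varphi=\sqrt{h_\hom}$, and the auxiliary Ambrosio--Tortorelli functionals $(u,v)\mapsto c_1\int_A(v^2+\eta_\e)|\nabla u|^2\dx+\int_A\big(\tfrac{(1-v)^2}{\e}+\e\,h_\hom(\nabla v)\big)\dx$ $\Gamma$-converge in $\LtL$ to $u\mapsto c_1\int_A|\nabla u|^2\dx+2\int_{S_u\cap A}\sqrt{h_\hom(\nu_u)}\dHn$ on $GSBV^2(A;\R^m)\times\{1\}$. Evaluating the associated $\Gamma$-liminf inequality at $(u^\nu_\zeta,1)$ on $A=Q^\nu$ and using $\nabla u^\nu_\zeta=0$ and $\H^{n-1}(S_{u^\nu_\zeta}\cap Q^\nu)=1$, we obtain: for every $(w_\e,z_\e)\to(u^\nu_\zeta,1)$ in $\LtL$,
\begin{equation}\label{plan:gliminf}
\liminf_{\e\to0}\bigg(c_1\int_{Q^\nu}(z_\e^2+\eta_\e)|\nabla w_\e|^2\dx+\int_{Q^\nu}\Big(\frac{(1-z_\e)^2}{\e}+\e\,h_\hom(\nabla z_\e)\Big)\dx\bigg)\ge 2\sqrt{h_\hom(\nu)}=g^\infty_\hom(\nu).
\end{equation}

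For the lower bound, let $(u_\e,v_\e)\to(u^\nu_\zeta,1)$ in $\LtL$ realise $g^\infty(\zeta,\nu)=\F^\infty(u^\nu_\zeta,1,Q^\nu)=\liminf_\e\F_\e(u_\e,v_\e,Q^\nu)$; we may assume this quantity is finite, hence by~\eqref{est:bounds-Fe} and~\ref{hyp:growth-f} that $\sup_\e AT_\e(u_\e,v_\e,Q^\nu)<+\infty$. Thus $\int_{Q^\nu}(v_\e^2+\eta_\e)|\nabla u_\e|^2\dx\le C$, $\e\int_{Q^\nu}|\nabla v_\e|^2\dx\le C$, $\frac1\e\int_{Q^\nu}(1-v_\e)^2\dx\le C$, and — since $\eta_\e\simeq\delta_\e$ — the blow-up rate $\int_{Q^\nu}|\nabla u_\e|^2\dx\le C/\delta_\e$. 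Fix $\sigma>0$, let $K=K(\sigma)$ and $\tilde v_\e:=v_\e^\sigma$ be as in Lemma~\ref{lem:ABC} (applied on a cube slightly larger than $Q^\nu$, to which $v_\e$ is extended with controlled energy in the usual way). From~\eqref{est:L2-norm-v-eta} and $\delta_\e\simeq\e^\alpha$ with $\alpha>1$ we get $\frac1\e\int|v_\e-\tilde v_\e|^2\dx\le cK^2C(\delta_\e/\e)^2\to0$; since also $0\le\tilde v_\e\le1$ and $v_\e\to1$ in $L^2(Q^\nu)$, we deduce $\tilde v_\e\to1$ in $L^2(Q^\nu)$, and a short computation based on the $L^2$-triangle inequality and $\int(1-v_\e)^2\le C\e$ yields $\frac1\e\int(1-v_\e)^2\ge\frac1\e\int(1-\tilde v_\e)^2-o(1)$. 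Combining this with~\eqref{est:v-eta-ABC} multiplied by $\e$ (the error $\e\sigma\int|\nabla v_\e|^2$ being $\le C\sigma$) gives
\begin{equation}\label{plan:surf}
\F_\e^s(v_\e,Q^\nu)\ge\int_{Q^\nu}\Big(\frac{(1-\tilde v_\e)^2}{\e}+\e\,h_\hom(\nabla\tilde v_\e)\Big)\dx-C\sigma-o(1).
\end{equation}

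The crux is then to produce $\hat u_\e\to u^\nu_\zeta$ with $\int_{Q^\nu}(v_\e^2+\eta_\e)f(\tfrac{x}{\delta_\e},\nabla u_\e)\dx\ge c_1\int_{Q^\nu}(\tilde v_\e^2+\eta_\e)|\nabla\hat u_\e|^2\dx-C\sigma-o(1)$. The $\eta_\e$-term is harmless, since $\eta_\e\int|\nabla\hat u_\e|^2\dx$ will be comparable to $\eta_\e\int|\nabla u_\e|^2\dx\le C$. For the $\tilde v_\e^2$-term one exploits that $\tilde v_\e$ is the $K\delta_\e$-average of $v_\e$: by Jensen's inequality and the self-adjointness of the averaging operator, $\int\tilde v_\e^2|\nabla\hat u_\e|^2\le\int v_\e^2\,(|\nabla\hat u_\e|^2)^\sigma$, and the discrepancy between the average $(|\nabla\hat u_\e|^2)^\sigma$ and $|\nabla\hat u_\e|^2$ is controlled through a difference-quotient estimate, whose cost of order $K\delta_\e$ is compensated exactly by the blow-up rate $\int|\nabla u_\e|^2\le C/\delta_\e$ — this is where the hypothesis $\eta_\e\simeq\delta_\e$ is genuinely used. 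When the resulting error is not directly negligible, one modifies $u_\e$ into $\hat u_\e$, à la Ambrosio, on the set $\{\tilde v_\e<\theta\}$ (which has small measure and controlled perimeter thanks to~\eqref{plan:surf} and the Modica--Mortola trick), replacing $u_\e$ there so that $\int_{\{\tilde v_\e\ge\theta\}}|\nabla\hat u_\e|^2$ stays bounded while $\hat u_\e\to u^\nu_\zeta$ is preserved (the transition set of $u^\nu_\zeta$ being $\H^{n-1}$-null in the limit). Granting this, adding it to~\eqref{plan:surf} and applying~\eqref{plan:gliminf} with $(w_\e,z_\e)=(\hat u_\e,\tilde v_\e)$ yields $g^\infty(\zeta,\nu)\ge 2\sqrt{h_\hom(\nu)}-C\sigma$, and $\sigma\to0$ concludes. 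I expect this replacement step — decoupling the averaging (which is tailored to the surface term) from the volume term — to be the main obstacle.

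For the upper bound, fix $\eta>0$ and pick $\phi\in W^{1,2}_0(Q)$ with $\int_Q h(x,\nu+\nabla\phi)\dx\le h_\hom(\nu)+\eta$, extended $Q$-periodically, together with, via Remark~\ref{rem:op} applied with $\lambda=h_\hom(\nu)$, a Lipschitz function $v_\eta\colon\R\to[0,1]$ which is even, equals $0$ on $[-\rho,\rho]$, equals $1$ outside $[-(T_\eta+\rho),T_\eta+\rho]$, and satisfies $\int_\R\big((1-v_\eta)^2+h_\hom(\nu)(v_\eta')^2\big)\dt\le 2\sqrt{h_\hom(\nu)}+\eta$ (with $\rho>0$ a fixed small constant). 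The recovery sequence $\bar v_\e$ is obtained by superposing on the one-dimensional transition $x\mapsto v_\eta(x\cdot\nu/\e)$ the $\delta_\e$-periodic corrector $\phi$, glued over mesoscopic slabs of width $m_\e$ with $\delta_\e\ll m_\e\ll\e$ exactly as in~\cite{ABC01}: on each slab $v_\eta'$ is nearly constant, so one uses the scaled cut-off corrector $\delta_\e v_\eta'(\cdot)\phi(\cdot/\delta_\e)$, and $m_\e\ll\e$ makes the second-derivative cross terms negligible; then $\bar v_\e$ is truncated to $[0,1]$ and kept $\equiv1$ for $|x\cdot\nu|\gtrsim\e(T_\eta+\rho)$ and $\equiv0$ for $|x\cdot\nu|\le\e\rho$. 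Let $\bar u_\e$ interpolate linearly between $0$ and $\zeta$ across $\{|x\cdot\nu|<\tfrac12\sqrt{\e\eta_\e}\}\subset\{|x\cdot\nu|\le\e\rho\}$, so that $\bar v_\e\nabla\bar u_\e\equiv0$ and, by~\ref{hyp:growth-f}, the volume term of $\F_\e(\bar u_\e,\bar v_\e,Q^\nu)$ is $\le c_2\eta_\e\int_{Q^\nu}|\nabla\bar u_\e|^2\dx\le C|\zeta|^2\sqrt{\eta_\e/\e}\to0$. A change of variables, the Riemann--Lebesgue lemma in the variable $x/\delta_\e$, and the choice of $\phi$ give $\limsup_\e\F_\e^s(\bar v_\e,Q^\nu)\le 2\sqrt{h_\hom(\nu)}+o_\eta(1)$; since $(\bar u_\e,\bar v_\e)\to(u^\nu_\zeta,1)$ in $\LtL$, Theorem~\ref{thm:gamma_limit} yields $g^\infty(\zeta,\nu)\le 2\sqrt{h_\hom(\nu)}+o_\eta(1)$, and a diagonal argument in $\eta\to0$ finishes the proof. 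On this side the parallel difficulty is that the optimal profile lives on scale $\e$ while the corrector oscillates on the finer scale $\delta_\e$, so the two must be combined through an intermediate scale rather than simply added.
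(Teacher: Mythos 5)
Your overall plan is the right one — both directions mirror the paper's strategy (averaging via Lemma~\ref{lem:ABC} for the lower bound, a corrector/optimal-profile superposition for the upper bound) — but the volume term in the lower bound contains a genuine gap, and you yourself flag it as the main obstacle without resolving it. The Jensen/self-adjointness step reduces the problem to comparing $\int v_\e^2(|\nabla\hat u_\e|^2)^\sigma\dx$ with $\int v_\e^2|\nabla\hat u_\e|^2\dx$, and you invoke a ``difference-quotient estimate'' to control the discrepancy between $|\nabla\hat u_\e|^2$ and its $K\delta_\e$-average. That estimate is not available: $|\nabla u_\e|^2$ is merely an $L^1$-function, so $\|\,|\nabla u_\e|^2-(|\nabla u_\e|^2)^\sigma\,\|_{L^1}$ tends to zero only with a $u_\e$-dependent, rateless modulus of continuity; the Ambrosio-style modification of $u_\e$ on $\{\tilde v_\e<\theta\}$ controls where the energy lives but creates no Sobolev regularity of $|\nabla\hat u_\e|^2$ and so does not produce a rate of order $\delta_\e$. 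Nothing in the hypotheses ties this modulus to $\delta_\e$, so the error term you need to absorb cannot be shown to be $o(1)$.

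The paper's device for precisely this obstruction is to drop the growth exponent of the volume term from $2$ to some $p\in(1,\tfrac43)$ before replacing $v_\e$ by $v_\e^\sigma$. By the elementary convexity inequality $2^{1-p}(a+b)^p\le a^p+b^p$ one gets $v_\e^p\ge 2^{1-p}(v_\e^\sigma)^p-|v_\e-v_\e^\sigma|^p$, and the cross term $\int|v_\e-v_\e^\sigma|^p|\nabla u_\e|^p\dx$ is then estimated by H\"older with exponents $\tfrac2p$ and $\tfrac2{2-p}$, using only the three bounds you already have: $\eta_\e\int|\nabla u_\e|^2\le C$, $\int|v_\e-v_\e^\sigma|^2\le cK^2\delta_\e^2/\e$ (Lemma~\ref{lem:ABC}), and $\|v_\e-v_\e^\sigma\|_\infty\le 2$. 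This produces $\e^{(p-2-3p\alpha)/2+2\alpha}$, which vanishes iff $p<(4\alpha-2)/(3\alpha-1)$ — and this window is nonempty precisely because $\alpha>1$; this is where the hypothesis $\eta_\e\simeq\delta_\e\simeq\e^\alpha$ is actually used, not in a difference-quotient estimate. One then applies Remark~\ref{rem:AT} with $p$-growth in the bulk and $2$-growth in the diffuse surface term (the remark explicitly allows different exponents, a point that is essential here), and the surface density in the $\Gamma$-limit is still $2\sqrt{h_\hom(\nu_u)}$, so the factor of $p$ costs nothing. With your $p=2$ choice, H\"older yields no smallness because there is no control on $\|\nabla u_\e\|_{L^q}$ for $q>2$.

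On the upper bound you are essentially correct, but follow a more hands-on route than the paper: you glue the $\delta_\e$-periodic cell corrector to the $\e$-scale profile across mesoscopic slabs of width $m_\e$ with $\delta_\e\ll m_\e\ll\e$, as in~\cite{ABC01}, which requires estimating the second-derivative cross terms. The paper sidesteps this entirely by invoking the classical homogenisation $\Gamma$-convergence of $v\mapsto\int_{R_\nu(Q'\times(0,T_\eta))}\big((1-v)^2+h(x/\sigma,\nabla v)\big)\dx$ with boundary datum $v_\eta^\nu$ (via~\cite{BD}) to obtain, as a black box, recovery functions $w_\sigma^\pm$ satisfying~\eqref{eq:upbpund_sub_0}; then setting $\sigma_\e=\delta_\e/\e$ and rescaling by $\e$. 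This avoids introducing the intermediate scale $m_\e$ and reduces the bookkeeping to periodicity of the tiling (which forces $\nu\in\Q^n$, $r_\e\in m_\nu\delta_\e\N^*$, etc.). Both routes work; yours trades reuse of a known theorem for a direct but more delicate gluing, and one should note that the corrector should be scaled as $(\delta_\e/\e)\,v_\eta'(x\cdot\nu/\e)\,\phi(x/\delta_\e)$, not $\delta_\e v_\eta'\phi(\cdot/\delta_\e)$, in order for $\nabla\bar v_\e$ to match $\tfrac{v_\eta'}{\e}(\nu+\nabla\phi(x/\delta_\e))$.
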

	
	\begin{proof}
	Not to overburden notation we still denote by $(\F_\e)$ the $\Gamma$-converging subsequence given by Theorem~\ref{thm:gamma_limit}.
	
We prove the claim in two steps. 
		
		\medskip
		\step 1 $g^\infty(\zeta,\nu)\ge g_{\rm hom}^\infty(\nu)$, for every $(\zeta,\nu)\in\R_0^{m}\times\S^{n-1}$.
		
		\smallskip
		Let $(\zeta,\nu)\in\R_0^m\times\S^{n-1}$ be fixed; by $\Gamma$-convergence we find a sequence $(u_\e,v_\e)\subset W^{1,2}(Q^\nu;\R^m)\times W^{1,2}(Q^\nu)$ converging to $(u_\zeta^\nu,1)$ in $\LtL$ and satisfying
		\begin{equation}\label{eq:recovery-ftilde}
		{g}^\infty(\zeta,\nu)={\F}^\infty(u_\zeta^\nu,1,Q^\nu)=\lim_{\e\to 0}{\F}_\e(u_\e,v_\e,Q^\nu)\,.
		\end{equation}
		By a vectorial truncation argument it is not restrictive to assume that $\|u_\e\|_{L^\infty(Q^\nu;\R^m)}$ is uniformly bounded and such that $u_\e\to u^\nu_\zeta$ also in $L^2(Q^\nu;\R^m)$. Moreover, thanks to Remark~\ref{rem:fund-est} we can apply the fundamental estimate~\cite[Proposition 5.1]{BMZ} to modify $(u_\e,v_\e)$ in such a way that $(u_\e,v_\e)=(\bar{u}_{\zeta,\e}^\nu,\bar{v}_{\e}^\nu)$ near $\partial Q^\nu$ without essentially increasing the energy. Summarising, without loss of generality we can assume to be in the following situation: $(u_\e,v_\e)\to(u_\zeta^\nu,1)$ in $L^2(Q^\nu;\R^m)\times L^2(Q^\nu)$, $(u_\e,v_\e)=(\bar{u}_{\zeta,\e}^\nu,\bar{v}_{\e}^\nu)$ near $\partial Q^\nu$, and $(u_\e,v_\e)$ satisfy~\eqref{eq:recovery-ftilde}. We then extend $(u_\e,v_\e)$ by $(\bar{u}_{\zeta,\e}^\nu,\bar{v}_{\e}^\nu)$ outside $Q^\nu$. \\
		
		Let $0<\sigma<c_1$ be fixed and $v_\e^\sigma$ be given, accordingly, by~\eqref{def:v-eta-ABC} in Lemma~\ref{lem:ABC}. 
	Then the desired inequality can be proven if we show the following: There exist $1<p<\frac43$ (depending on the exponent $\alpha$) and a constant $C>0$ (independent of $\e$ and $\sigma$) such that
		\begin{equation}\label{claim_subcritical}
\liminf_{\e\to 0}\F_\e(u_\e,v_\e,Q^\nu)\ge \liminf_{\e\to 0}\E_\e(u_\e,v_\e^\sigma)-C\sigma\,,
		\end{equation}
		where $\E_\e$ is defined as in~\eqref{AT-anisotropic} with $a=2^{1-p}\sigma$, $b=1-\sigma$, $A=Q^\nu$, and $\varphi=\sqrt{h_\hom}$.
		Indeed, suppose for a moment that~\eqref{claim_subcritical} holds. Then that thanks to~\ref{hyp:hom-h} it is immediate to check that $h_\hom$ is $2$-homogeneous. Since in addition $h_\hom$ is convex and satisfies the growth condition~\ref{hyp:growth-h} (\cf Remark~\ref{rem:prop-g} \eqref{rem:g-infty}), $\sqrt{h_\hom}:\R^n\to[0,+\infty)$ is a norm (see, \eg \cite[Corollary 15.3.1]{Rockafellar}).
		 Then Remark~\ref{rem:AT} together with the fact that $(u_\e,v_\e^\sigma)\to (u_\zeta^\nu,1)$ in $\LtL$ yield 
		\begin{equation*}
\liminf_{\e\to 0}\F_\e(u_\e,v_\e,Q^\nu)\ge{(1-\sigma)}\sqrt{h_\hom(\nu)}-C\sigma\,.
		\end{equation*}
	Thus, gathering~\eqref{eq:recovery-ftilde} and~\eqref{g_hom_subcritical} we obtain 
	\begin{equation*}
		{g}^\infty(\zeta,\nu)\geq {(1-\sigma)}g_\hom^\infty(\nu)-C\sigma\,,
	\end{equation*}
	from which we conclude by letting $\sigma\to 0$.
	
	We are now left to prove~\eqref{claim_subcritical}. To this end we notice that thanks to~Lemma~\ref{lem:ABC} there exists $K=K(\sigma)\in \N$ and $c>0$ such that setting $r=r(\sigma)\defas(2+\sqrt{n})\sqrt{n}K$ we have
		\begin{align}
		\int_{Q^\nu}h\bigg(\frac{x}{\delta_\e},\nabla v_\e\bigg)\dx \geq\int_{Q^\nu}h_{\rm hom}(\nabla v_\e^\sigma)\dx-\sigma\int_{Q_{1+r\delta_\e}^\nu}|\nabla v_\e|^2\dx\,\label{est:h-h-hom}
		\end{align}
		and
		\begin{align}
		\int_{Q^\nu}|v_\e-v_\e^\sigma|^2 \dx \leq c(K\delta_\e)^2\int_{Q_{1+r\delta_\e}^\nu}|\nabla v_\e|^2\dx\,. \label{est:v-v-eta-L2}
		\end{align}
		Moreover, \eqref{eq:recovery-ftilde} together with~\ref{hyp:growth-h} ensure that there exists $M>0$ such that
		\begin{align}\label{est:nabla-vk}
		\int_{Q_{1+r\delta_\e}^\nu}|\nabla v_\e|^2\dx=\int_{Q^\nu}|\nabla v_\e|^2\dx+\int_{Q_{1+r\delta_\e}^\nu\setminus Q^\nu}|\nabla\bar{v}_{\e}^\nu|^2\dx\leq \frac M\e\,,
		\end{align}
		for every $\e$. Therefore, a convexity argument together with~\eqref{est:h-h-hom}--\eqref{est:nabla-vk} yield
		\begin{equation}\label{est:Fk-suf-subcrit}
		\begin{split}
		\F_\e^s(v_\e,Q^\nu) &\geq {(1-\sigma)}\int_{Q^\nu}\frac{(1-v_\e^\sigma)^2}{\e}\dx-\frac{(1-\sigma)}{\sigma}\int_{Q^\nu}\frac{(v_\e-v_\e^\sigma)^2}{\e}\dx+\e\int_{Q^\nu} h\bigg(\frac{x}{\delta_\e},\nabla v_\e\bigg)\dx\\
		&\geq{(1-\sigma)}\int_{Q^\nu}\bigg(\frac{(1-v_\e^\sigma)^2}{\e}+\e h_\hom(\nabla v_\e^\sigma)\bigg)\dx-M\sigma-M \frac{(1-\sigma)}{\sigma}\frac{K^2\delta_\e^2}{\e^2}\,.
		\end{split}
		\end{equation}
	We now turn to estimate the bulk term in  $\F_\e(u_\e,v_\e,Q^\nu)$. 
	Since $\sigma<c_1$, by \ref{hyp:growth-f} and the
	H\"older Inequality with exponents $q=\frac{2}{p}>1$ and $q'=\frac{2}{2-p}>2$ we immediately obtain
	\begin{equation}\label{est-bulk1}
		\int_{Q^\nu}(v_\e^2+\eta_\e)f\bigg(\frac{x}{\delta_\e},\nabla u_\e\bigg)\dx
		\ge
\sigma\int_{Q^\nu}\big(v_\e^p+\eta_\e)|\nabla u_\e|^p\dx-\sigma(1+\eta_\e)\,,
\end{equation}
	for  any $1<p\le2$. 
	Moreover, the convexity inequality $2^{1-p}(a+b)^p\leq a^p+b^p$ gives 
	\begin{equation}\label{est-bulk2}
		\begin{split}
\sigma	\int_{Q^\nu}\big(v_\e^p+\eta_\e)|\nabla u_\e|^p\dx
\ge \frac{\sigma}{2^{p-1}}\int_{Q^\nu}\big((v_\e^\sigma)^p+\eta_\e)|\nabla u_\e|^p\dx- \sigma \int_{Q^\nu}|v_\e^\sigma-v_\e|^p|\nabla u_\e|^p\dx\,.
		\end{split}
	\end{equation}
	Then, thanks to~\eqref{est:Fk-suf-subcrit}--\eqref{est-bulk2} the claim follows if we show that the last term in~\eqref{est-bulk2} vanishes for some suitably chosen $p>1$.
	
	In view of~\eqref{eq:recovery-ftilde} and~\ref{hyp:growth-f} it is not restrictive to assume that
	\begin{equation}\label{est:nabla-uk}
		\eta_\e\int_{Q^\nu}|\nabla u_\e|^2\dx\leq M,
	\end{equation}
	uniformly in $\e$. In this way, again by the H\"older Inequality, we have
	\begin{equation}\label{est:Hoelder2}
		\begin{split}
			\int_{Q^\nu}|v_\e^\sigma-v_\e|^p|\nabla u_\e|^p\dx&\leq\Bigg(\int_{Q^\nu}|v_\e^\sigma-v_\e|^\frac{2p}{2-p}\dx\Bigg)^\frac{2-p}{2}\Bigg(\int_{Q^\nu}|\nabla u_\e|^2\dx\Bigg)^\frac{p}{2}\\
			&\leq M^\frac{p}{2}\eta_\e^{-\frac{p}{2}}\Bigg(\int_{Q^\nu}|v_\e^\sigma-v_\e|^\frac{2p}{2-p}\dx\Bigg)^\frac{2-p}{2}.
		\end{split}
	\end{equation}
		Note that $0\leq v_\e\leq 1$, so that by construction we have $0\leq v_\e^\sigma\leq 1$, hence $|v_\e-v_\e^\sigma|\leq 2$. Since $\frac{2p}{2-p}>2$, this implies that $\big|\frac{v_\e-v_\e^\sigma}{2}\big|^\frac{2p}{2-p}\leq\big|\frac{v_\e-v_\e^\sigma}{2}\big|^2$. Thus, from~\eqref{est:v-v-eta-L2} and~\eqref{est:nabla-vk} we deduce that
	\begin{equation*}
		\int_{Q^\nu}|v_\e-v_\e^\sigma|^\frac{2p}{2-p}\dx\leq 2^{\frac{2p}{2-p}-2}\int_{Q^\nu}|v_\e-v_\e^\sigma|^2\dx\leq 2^\frac{4p-4}{2-p}c(K\delta_\e)^2\e^{-1}M\,.
	\end{equation*}
	Hence, the estimate in~\eqref{est:Hoelder2} gives
	\begin{align}\label{est:Holder-af}
		\int_{Q^\nu}|v_\e^\sigma-v_\e|^p|\nabla u_\e|^p\dx\leq M 2^{2(p-1)}c^\frac{2-p}{2}K^{2-p}\eta_\e^{-\frac{p}{2}}\delta_\e^{2-p}\e^\frac{p-2}{2}\,.
	\end{align}
Since by assumption $\eta_\e \simeq \delta_\e \simeq\e^\alpha$, for some $\alpha >1$, \eqref{est:Holder-af} becomes
\begin{align}\label{est:Hoelder-final}
		\int_{Q^\nu}|v_\e^\sigma-v_\e|^p|\nabla u_\e|^p\dx\leq C\e^{\frac{p-2-3p\alpha}{2}+2\alpha}.	
\end{align}  	 
	We now observe that $\frac{p-2-3p\alpha}{2}+2\alpha>0$, if $p<\frac{4\alpha-2}{3\alpha-1}$; furthermore, the latter can be always fulfilled, since $\frac{4\alpha-2}{3\alpha-1}>1$, for $\alpha>1$. Eventually, with this choice of the exponent $p$ the right hand side of~\eqref{est:Hoelder-final} becomes infinitesimal as $\e \to 0$. Therefore, gathering~ \eqref{est-bulk1},~\eqref{est-bulk2}, and~\eqref{est:Hoelder-final} we get
		\begin{align*}
				 \int_{Q^\nu}(v_\e^2+\eta_\e)f\bigg(\frac{x}{\delta_\e},\nabla u_\e\bigg)\dx\ge
		\frac{\sigma}{2^{p-1}}\int_{Q^\nu}\big((v_\e^\sigma)^p+\eta_\e)|\nabla u_\e|^p\dx
		-\sigma+o(1)\,,
	\end{align*}
	as $\e\to 0$. 
		Together with~\eqref{est:Fk-suf-subcrit} this gives \eqref{claim_subcritical} with $C=M+1$ and thus the desired inequality.
		
		\medskip
		\step 2 $g^\infty(\zeta,\nu)\le g_{\rm hom}^\infty(\nu)$, for every $(\zeta,\nu)\in\R_0^m\times\S^{n-1}$.
		
	\smallskip
	Let $(\zeta,\nu)\in\R_0^m\times\S^{n-1}$ be fixed; by Theorem~\ref{thm:gamma_limit} we have that 
	\begin{equation}\label{est:g-tilde-liminf}
		g^\infty(\zeta,\nu)=\F^\infty(u_\zeta^\nu,1,Q^\nu)\le\liminf_{\e\to 0}\F_\e(u_\e,v_\e,Q^\nu)\,,
	\end{equation}
	for every sequence $(u_\e,v_\e)$ converging to $(u^{\nu}_\zeta,1)$ in $L^0(\R^n;\R^m)\times L^0(\R^n)$. 
	
	Let $\eta>0$ be fixed, in what follows we construct a sequence $(\bar u_\e,\bar v_\e)$ converging to $(u_\zeta^{\nu},1)$ in $L^0(\R^n;\R^m)\times L^0(\R^n)$ and such that 
	\begin{equation}\label{claim2}
		\limsup_{\e\to 0}\F_\e(\bar u_\e,\bar v_\e,Q^\nu)\le g^\infty_{\rm hom}(\nu)+\eta\,.
	\end{equation}
	Then, we can conclude by combining~\eqref{est:g-tilde-liminf} and~\eqref{claim2} and by the arbitrariness of $\eta>0$.
	
	We observe that since $g^\infty(\zeta,\cdot)$ and $g_{\rm hom}^\infty$ are both continuous (\cf Remarks~\ref{rem:prop-g}~\eqref{rem:g-infty} and~\ref{rem:prop-g-tilde}), by a standard density argument it is enough to consider $\nu\in\S^{n-1}\cap \Q^\nu$. Let then $R_\nu\in\Q^{n\times n}$ be an orthogonal matrix as in~\ref{Rn} such that $R_\nu e_n=\nu$.
	Then there is $m_\nu \in \N^*$ such that $m_\nu R_\nu\in\Z^{n\times n}$. In this way, we have
	\begin{equation}\label{cond:m-nu}
	m_\nu R_\nu(z,0)\in\Pi^\nu\cap\Z^n\; \text{ for all }\, z\in\Z^{n-1}\quad\text{and}\quad m_\nu\nu\in\Z^n\,.
	\end{equation}
We now define the sequence $(\bar{u}_\e,\bar{v}_\e)$. To this end, let $\xi_\e\defas\lfloor\frac{\sqrt{\e\eta_\e}}{\delta_\e}\rfloor\delta_\e$, and define $\bar{u}_\e\in W^{1,2}_\loc(\R^n;\R^m)$ by setting
	\begin{equation*}
 	\bar	u_\e(x)\defas
 		\begin{cases}
 			\dfrac{x\cdot{\nu}}{\xi_\e}u_\zeta^{\nu}&\text{if}\ |x\cdot\nu|\le \xi_\e\,,\\
 			u_\zeta^{\nu}&\text{otherwise}\,,
 		\end{cases}
 	\end{equation*}
 	so that in particular $\nabla\bar{u}_\e=0$ outside $\{|x\cdot\nu|\le\xi_\e\}$. We now define $\bar{v}_\e \in W^{1,2}_\loc(\R^n)$ in such a way that $\bar{v}_\e=0$ in the region where $\nabla\bar{u}_\e\neq 0$. 
To this end, note that by Remark~\ref{rem:op} for a fixed $\eta>0$ there exist $T_\eta>0$ and $v_\eta\in W^{1,2}(0,T_\eta)$ with $v_\eta(0)=0$, $v_\eta(T_\eta)=1$ such that
	\begin{equation}\label{eq:optimal_profile}
	\int_0^{T_\eta}\big((1-v_\eta)^2+h_{\rm hom}(\nu)(v'_\eta)^2\big)\dt\le\sqrt{h_{\rm hom}(\nu)}+\frac{\eta}{2}\,,
	\end{equation}
	where $h_{\rm hom}$ is defined in \eqref{h_hom_subcritical}. Let us extend $v_\eta$ to $(0,+\infty)$ by setting $v_\eta(t)\defas 1$ for $t\geq T_\eta$ and let us define $v_\eta^\nu\in W^{1,2}_\loc(\R^n)$ by setting $v_\eta^\nu(x)\defas v_\eta(|x\cdot\nu|)$.

	By appealing to the classical homogenization result (see, \eg \cite[Proposition 11.7 and Theorem 14.5]{BD}), for any positive sequence $\sigma\to0$ we deduce the existence of a sequence $(w_\sigma^+)\subset W^{1,2}\big(R_\nu(Q'\times(0,T_\eta))\big)$ such that $(w_\sigma^+-v_\eta^\nu)\in W_0^{1,2}\big(R_\nu(Q'\times(0,T_\eta))\big)$,
		$w_\sigma^+\to v_\eta^\nu$ in $L^2(R_\nu(Q'\times(0,T_\eta)))$ as $\sigma\to0$ and
	\begin{equation}\label{eq:upbpund_sub_0}
	\begin{split}
	&\lim_{\sigma\to0}\int_{R_\nu(Q'\times(0,T_\eta))}\bigg((1-w_\sigma^+)^2+ h\left(\frac{x}{\sigma},\nabla w_\sigma^+\right)\bigg)\dx\\
	&\quad=\int_{R_\nu(Q'\times(0,T_\eta))}\big((1-v_\eta^\nu)^2+h_{\rm hom}(\nabla v_\eta^\nu)\big)\dx\leq\sqrt{h_\hom(\nu)}+\frac{\eta}{2}\,,
	\end{split}
	\end{equation}
	where the last inequality follows from~\eqref{eq:optimal_profile} and the definition of $v_\eta^\nu$.
	Similarly, we find $(w_\sigma^-)\subset W^{1,2}\big(R_\nu(Q'\times(-T_\eta,0))\big)$ satisfying the analogous properties on $R_\nu(Q'\times(-T_\eta,0))$.
	
	Now set $\sigma_\e\defas\frac{\delta_\e}{\e}\to 0$, $w_\e^+\defas w_{\sigma_\e}^+$, and $w_\e^-\defas w_{\sigma_\e}^-$. By a  truncation argument it is not restrictive to assume that $0\leq w_\e^-,w_\e^+\leq 1$.
		Moreover, let 
	\begin{equation}\label{def:re}
	r_\e\defas\Big(\Big\lfloor\frac{\e}{m_\nu\delta_\e}\Big\rfloor+1\Big)m_\nu\delta_\e\in (\e,\e+m_\nu\delta_\e)
	\end{equation}
	 and consider the open intervals 
	 \begin{equation*}
	 I_\e^-\defas\big(-m_\nu\xi_\e-\e T_\eta\,,-m_\nu\xi_\e\big)\quad\text{and}\quad I_\e^+\defas \big(m_\nu\xi_\e,m_\nu\xi_\e+\e T_\eta\big)\,,
	 \end{equation*}
	 as well as $I_\e\defas I_\e^-\cup I_\e^+$. We start defining $\bar v_\e$ on $R_\nu\big(Q'_{r_\e}\times I_\e\big)$ by setting
	 \begin{equation*}
	   \bar	v_\e(x)\defas\begin{cases}
	   w_{\e}^-\left(\dfrac{x+m_\nu\xi_\e\nu}{\e}\right) &\text{in}\ R_\nu(Q_\e'\times I_\e^-)\,,\\[1em]
	   w_{\e}^+\left(\dfrac{x-m_\nu\xi_\e\nu}{\e}\right) &\text{in}\ R_\nu(Q_\e'\times I_\e^+)\,,\\[1em]
	 	v_\eta\left(\dfrac{|x\cdot\nu|-m_\nu\xi_\e}{\e}\right)&\text{in}\ R_\nu\big((Q_{r_\e}'\setminus Q_\e')\times I_\e\big)\,.\\
	 		\end{cases}
	 \end{equation*}
	We then extend $\bar v_\e$ $r_\e$-periodically in directions $R_\nu e_i$, $i=1,\ldots, n-1$ by setting
	\begin{equation}\label{per-extension-ve}
	\bar{v}_\e(x)\defas\bar{v}_\e\big(x-R_\nu(r_\e z,0)\big)\; \text{ if }\; x\in R_\nu\big((Q_{r_\e}'+r_\e z)\times I_\e\big)\,,\ z\in\Z^{n-1}\,. 
	\end{equation}
	In this way, $\bar{v}_\e$ is defined on the set 
	\[
	R_\nu(\R^{n-1}\times I_\e)=\{x\in \R^n \colon m_\nu\xi_\e<|x\cdot\nu|<m_\nu\xi_\e+\e T_\eta\}
	\] 
	and eventually we extend $\bar{v}_\e$ by setting $\bar{v}_\e(x)\defas 1$ if $|x\cdot\nu|\geq \e T_\eta+m_\nu\xi_\e$ and $\bar{v}_\e(x)\defas 0$ if $|x\cdot\nu|\leq m_\nu\xi_\e$. Note that thanks to the boundary conditions satisfied by $w_\e^+$ and $w_\e^-$ the functions $\bar{v}_\e$ belong to $W_\loc^{1,2}(\R^n)$. Moreover, by construction we have
	\begin{equation*}
		(\bar u_\e(x),\bar v_\e(x))=(u_\zeta^\nu(x),1)\quad\text{ if  }\ |x\cdot\nu|\ge m_\nu\xi_\e+\e T_\eta\,,
	\end{equation*}
	and thus $(\bar u_\e,\bar v_\e)$ converges to $(u_\zeta^\nu,1)$ in 
	$L^0(\R^n;\R^m)\times L^0(\R^n)$. 
	
To conclude it is then left to show that $(\bar{u}_\e,\bar{v}_\e)$ also satisfies~\eqref{claim2}. Since $\bar v_\e \nabla \bar u_\e =0$ a.e.\! in $\R^n$, from~\ref{hyp:growth-f} we deduce that
	\begin{equation}\label{eq:upbound1}
	{\F}_\e(\bar u_\e,\bar v_\e,Q^\nu)\leq c_2\eta_\e\int_{Q^\nu}|\nabla\bar u_\e|^2\dx+	\F^{s}_\e(\bar v_\e,Q^{\nu})\leq c_2|\zeta|^2\frac{\eta_\e}{\xi_\e}+\F_\e^s(\bar{v}_\e, Q^\nu)\,,
	\end{equation}
where to establish \eqref{eq:upbound1} we also used that $|\nabla \bar{u}_\e|\leq\frac{|\zeta|}{\xi_\e}$ in $\{|x\cdot\nu|\leq\xi_\e\}$ and $\nabla\bar{u}_\e=0$ outside. By the choice of $\xi_\e$ the first term on the right-hand side of~\eqref{eq:upbound1} vanishes as $\e\to 0$; hence it only remains to estimate $\F_\e^s(\bar{v}_\e,Q^\nu)$. Setting $S_\e\defas R_\nu(Q'\times I_\e)$ we have
		\begin{equation}\label{eq:upbound_sub_1}
			\begin{split}
				\F^{s}_\e(\bar v_\e,Q^{\nu})\le\frac{2 m_\nu\xi_\e}{\e}+ \int_{S_\e}\bigg(\frac{(1-\bar v_\e)^2}{\e}+\e h\Big(\frac{x}{\delta_\e},\nabla \bar v_\e\Big)\bigg)\dx\,.
			\end{split}
		\end{equation}
	To estimate the second term on the right-hand side of \eqref{eq:upbound_sub_1} define 
\begin{equation*}
	J_\e\defas\Big\{z\in\Z^{n-1}\colon (Q'_{r_\e}+ r_\e z)	\cap Q'\ne\emptyset	\Big\}\,;
\end{equation*}
	then, in view of~\eqref{per-extension-ve}, the periodicity assumption~\ref{hyp:per-h} together with~\eqref{cond:m-nu} and the choice of $r_\e$ imply that
	\begin{equation}\label{est:ub-subcrit-periodicity}
	\begin{split}
	\int_{S_\e}\bigg(\frac{(1-\bar v_\e)^2}{\e}+\e h\Big(\frac{x}{\delta_\e},\nabla \bar v_\e\Big)\bigg)\dx &\leq\sum_{z\in J_\e}\int_{R_\nu\big((Q_{r_\e}'+r_\e z)\times I_\e\big)}\bigg(\frac{(1-\bar v_\e)^2}{\e}+\e h\Big(\frac{x}{\delta_\e},\nabla \bar v_\e\Big)\bigg)\dx\\
	&=\#(J_\e)\int_{R_\nu(Q_{r_\e}'\times I_\e)}\bigg(\frac{(1-\bar v_\e)^2}{\e}+\e h\Big(\frac{x}{\delta_\e},\nabla \bar v_\e\Big)\bigg)\dx\,.
	\end{split}
	\end{equation}
	We now estime the integral on the right-hand side of~\eqref{est:ub-subcrit-periodicity} on the set $R_\nu(Q_{\e}'\times I_\e^+)$ by applying the change of variables $y=\e x+m_\nu\xi_\e\nu$. Since $\xi_\e\in\delta_\e\Z$, recalling~\eqref{cond:m-nu} and~\ref{hyp:per-h} we obtain
		\begin{equation}\label{eq:upbound_sub_2}
		\begin{split}
		\int_{R_\nu(Q_\e'\times I_\e^+)}\bigg(\frac{(1-\bar v_\e)^2}{\e}+\e h\Big(\frac{x}{\delta_\e},\nabla\bar v_\e\Big)\bigg)\dx=\e^{n-1}\int_{R_\nu(Q'\times(0,T_\eta))}\bigg((1-w_{\e}^+)^2+h\Big(\frac{x}{\sigma_\e},\nabla w_{\e}^+\Big)\bigg)\dx\,.
		\end{split}
		\end{equation}
		Moreover, a similar estimate holds on $R_\nu(Q_\e'\times I_\e^-)$. Instead, on the remaining part of $R_\nu(Q_{r_\e}'\times I_\e)$ the definition of $\bar{v}_\e$ together with a change of variables, Fubini's Theorem, and~\ref{hyp:growth-h} give
	\begin{equation}\label{eq:upbound_sub_3}
	\begin{split}
	&\int_{R_\nu((Q_{r_\e}'\setminus Q_\e')\times I_\e)}\bigg(\frac{(1-\bar v_\e)^2}{\e}+\e h\Big(\frac{x}{\delta_\e},\nabla \bar v_\e\Big)\bigg)\dx\leq\int_{R_\nu((Q_{r_\e}'\setminus Q_\e')\times I_\e)}\bigg(\frac{(1-\bar v_\e)^2}{\e}+c_4\e|\nabla\bar{v}_\e|^2\bigg)\dx\\
	&\quad=2\int_{Q_{r_\e}'\setminus Q_\e'}\int_0^{T_\eta}\big((1-v_\eta)^2+c_4(v'_\eta)^2\big)\dt\dx'\leq 2\big(r_\e^{n-1}-\e^{n-1}\big)\frac{c_4}{c_3}C_\eta\,.
	\end{split}
	\end{equation}
We finally observe that $\#(J_\e)\le (\lfloor 1/r_\e\rfloor+1)^{n-1}$ and that thanks to~\eqref{def:re} we have $\frac{r_\e}{\e}=1+O(\frac{\delta_\e}{\e})$.
Thus, gathering~\eqref{eq:upbound_sub_1}--\eqref{eq:upbound_sub_3} we deduce that
		\begin{equation}\label{eq:upbound_sub_4}
			\begin{split}
				\F_\e^s(\bar{v}_\e,Q^\nu) 
				&
				\le (1+r_\e)^{n-1}\int_{R_\nu(Q'\times(0,T_\eta))}\bigg((1-w_\e^+)^2+h\left(\frac{x}{\sigma_\e},\nabla w_\e^+\right)\bigg)\dx\\
				&+ (1+r_\e)^{n-1}\int_{R_\nu(Q'\times(-T_\eta,0))}\bigg((1-w_\e^-)^2+h\left(\frac{x}{\sigma_\e},\nabla w_\e^-\right)\bigg)\dx+o(1)\,,
			\end{split}
		\end{equation}
as $\e \to 0$.		
		Eventually, by combining~\eqref{eq:upbound1}, \eqref{eq:upbound_sub_4}, and \eqref{eq:upbpund_sub_0} we get
	\begin{equation*}
			\limsup_{\e\to0}\F_\e(\bar u_\e,\bar v_\e,Q^\nu)\le 2\sqrt{h_{\rm hom}(\nu)}+\eta\,,
		\end{equation*}
		hence~\eqref{claim2} is proven and thus the claim.
	\end{proof}
		\appendix
	\section*{Appendix}
	\setcounter{theorem}{0}
	\setcounter{equation}{0}
	\renewcommand{\theequation}{A.\arabic{equation}}
	\renewcommand{\thetheorem}{A.\arabic{theorem}}
	In this short section we state and prove Lemma~\ref{lem:est-L2-norm-average}, from which estimate~\eqref{est:L2-norm-v-eta} in Lemma~\ref{lem:ABC} follows. A similar estimate has been obtained in the proof of~\cite[Proposition 4.10]{ABC01} for the $L^1$-norm. 
	
	\begin{lemma}\label{lem:est-L2-norm-average} There exists $c=c(n)>0$ such that for every open sets $A, A'\subset\R^n$, with $A'\wcont A$, every $v\in W^{1,2}_\loc(\R^n)$, and every $r$ satisfying  
	\[ 
	0<r<\frac{2\dist(A',\partial A)}{(2+\sqrt{n})\sqrt{n}},
	\] 
	there holds 
	\begin{equation*}\label{est:v-vr-L2}
	\int_{A'}|v-v_r|^2\dx\leq c\,r^2\int_A|\nabla v|^2\dx\,,
	\end{equation*}
	where
	\begin{equation*}\label{def:vr}
	v_r(x)\defas\frac{1}{r^n}\int_{Q_r(x)}v(y)\dy\,.
	\end{equation*}
	\end{lemma}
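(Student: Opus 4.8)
The plan is to obtain the estimate as a standard Poincaré-type inequality, derived from the $L^{2}$ difference-quotient bound for Sobolev functions. First I would estimate the integrand pointwise: for $x\in A'$, the identity $v(x)-v_{r}(x)=\frac{1}{r^{n}}\int_{Q_{r}(x)}(v(x)-v(y))\dy$ (valid since $\frac{1}{r^n}\int_{Q_r(x)}v(x)\dy=v(x)$) together with Jensen's inequality gives
\begin{equation*}
|v(x)-v_{r}(x)|^{2}\le\frac{1}{r^{n}}\int_{Q_{r}(x)}|v(x)-v(y)|^{2}\dy=\frac{1}{r^{n}}\int_{rQ}|v(x)-v(x+z)|^{2}\dz\,,
\end{equation*}
where in the last equality I used the change of variables $y=x+z$ and $Q_{r}(x)=rQ+x$. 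Integrating over $x\in A'$ (a bounded set, since $A'\wcont A$) and applying Fubini's theorem, the problem is reduced to estimating, uniformly in $z\in rQ$, the translation error $\int_{A'}|v(x+z)-v(x)|^{2}\dx$.

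Second, I would use the translation estimate: for every bounded open set $A''$ with $A'\wcont A''\wcont A$ and every $z$ with $|z|<\dist(A',\partial A'')$ one has $\int_{A'}|v(x+z)-v(x)|^{2}\dx\le|z|^{2}\int_{A''}|\nabla v|^{2}\dx$. For $v\in C^{1}$ this follows immediately from $v(x+z)-v(x)=\int_{0}^{1}\nabla v(x+tz)\cdot z\dt$, Jensen's inequality, Fubini's theorem, and the inclusion $A'+tz\subset A''$ for $t\in[0,1]$; the case $v\in W^{1,2}_{\loc}(\R^{n})$ is recovered by mollifying $v$ on $A''$ and passing to the limit. I would then observe that every $z\in rQ$ satisfies $|z|\le\frac{\sqrt{n}}{2}r$, and that the hypothesis $r<\frac{2\dist(A',\partial A)}{(2+\sqrt{n})\sqrt{n}}$ forces $\frac{\sqrt{n}}{2}r<\frac{\dist(A',\partial A)}{2+\sqrt{n}}<\dist(A',\partial A)$; hence one can fix a bounded open set $A''$ with $A'\wcont A''\wcont A$ and $\dist(A',\partial A'')>\frac{\sqrt{n}}{2}r$ (e.g.\ $A''=\{x\colon\dist(x,A')<\rho\}$ for a suitable $\rho$), so that the translation estimate applies to every $z\in rQ$.

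Third, I would combine the two ingredients. Since $|z|^{2}\le\frac{n}{4}r^{2}$ on $rQ$ and $\L^{n}(rQ)=r^{n}$, substituting the translation estimate into the bound of the first step and integrating in $z$ yields
\begin{equation*}
\int_{A'}|v-v_{r}|^{2}\dx\le\frac{1}{r^{n}}\int_{rQ}|z|^{2}\dz\,\int_{A''}|\nabla v|^{2}\dx\le\frac{n}{4}\,r^{2}\int_{A}|\nabla v|^{2}\dx\,,
\end{equation*}
which is the claimed inequality with $c=n/4$ (any larger absolute constant depending only on $n$ works; a direct computation of $\int_{rQ}|z|^2\dz$ even gives $c=n/12$). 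The statement is essentially routine and there is no serious obstacle: the only point requiring some care is the density/mollification argument for the translation estimate when $A$ is unbounded and $v$ is merely in $W^{1,2}_{\loc}(\R^{n})$, which is handled by localising to the bounded intermediate set $A''$, where $v$ and $\nabla v$ are genuinely square integrable and classical mollification applies; everything else is bookkeeping with Jensen's inequality, Fubini's theorem, and the distance condition.
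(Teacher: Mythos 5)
Your proof is correct, and it takes a genuinely different and more economical route than the paper's. The paper decomposes $A'$ into cubes $Q_r(rz)$ indexed by $z\in J_r\subset\Z^n$, splits $v-v_r$ on each cube into $(v-v_r(rz))+(v_r(rz)-v_r)$, estimates the first piece by the Poincar\'e inequality and the second by Jensen plus the translation bound, and then sums over the cubes --- which in turn requires tracking the bounded overlap $N$ of the enlarged cubes $Q_{(1+\sqrt n)r}(rz)$ and accounts for the precise constant $(2+\sqrt n)\sqrt n/2$ in the hypothesis on $r$. You bypass all of this: a single application of Jensen to the identity $v(x)-v_r(x)=\tfrac1{r^n}\int_{Q_r(x)}(v(x)-v(y))\,\mathrm dy$, followed by Fubini and the $L^2$ translation estimate $\int_{A'}|v(\cdot+z)-v|^2\le|z|^2\int_{A''}|\nabla v|^2$ with a single intermediate set $A''$, gives the bound directly with $c=n/4$ (or $n/12$ if one integrates $|z|^2$ exactly). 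This avoids the Poincar\'e inequality and the cube bookkeeping entirely. One incidental consequence, worth noting, is that your argument only uses $\tfrac{\sqrt n}{2}r<\dist(A',\partial A)$, so the lemma would hold under the weaker hypothesis $r<\tfrac{2\dist(A',\partial A)}{\sqrt n}$; the paper's stronger restriction $r<\tfrac{2\dist(A',\partial A)}{(2+\sqrt n)\sqrt n}$ is an artefact of needing the enlarged cubes to fit inside $A$. Your treatment of the $W^{1,2}_{\loc}$ case by localising to the bounded set $A''$ and mollifying is also fine.
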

	\begin{proof}
	Let $A,A' \subset \R^n$ be open and such that $A'\wcont A$; for any $r>0$ set
	\begin{equation*}
	J_r\defas\{z\in\Z^n\colon Q_r(rz)\cap A'\neq \emptyset\}\,.
	\end{equation*}
	For any $v\in W^{1,2}_\loc(\R^n)$ we have
	\begin{equation}\label{est:append-1}
	\begin{split}
	\int_{A'}|v(x)-v_r(x)|^2\dx&\leq\sum_{z\in J_r}\int_{Q_r(rz)}|v(x)-v_r(x)|^2\dx\\
	&\leq 2\sum_{z\in J_r}\bigg(\int_{Q_r(rz)}|v(x)-v_r(rz)|^2+\int_{Q_r(rz)}|v_r(rz)-v_r(x)|^2\dx\bigg)\,.
	\end{split}
	\end{equation}
	We estimate the two terms on the right-hand side of~\eqref{est:append-1} using the Poincar\'{e} Inequality and the continuity of the translation operator. 
	
	By a scaling argument, for every $z\in J_r$ we have 
	\begin{equation}\label{est:poincare}
	\|v-v_r(rz)\|_{L^2(Q_r(rz))}\leq c_{P} r\|\nabla v\|_{L^2(Q_r(rz);\R^n)}\,,
	\end{equation}
	where $c_P>0$ is the constant for the Poincar\'{e} inequality in the unit cube. 
	Also note that for any $z\in J_r$ and $x\in Q_r(rz)$ there exists $x_0\in A'\cap Q_r(rz)$ with $|x-x_0|<\sqrt{n}r$, hence
	\begin{equation}\label{inclusion1}
	Q_r(rz)\subset A\quad\text{as long as}\ r<\frac{\dist(A',\partial A)}{\sqrt{n}}\,.
	\end{equation}
	Moreover, the cubes $Q_r(rz)$, $z\in J_r$ are pairwise disjoint.
	Thus, summing up~\eqref{est:poincare}, for $r>0$ satisfying~\eqref{inclusion1} we obtain
	\begin{equation}\label{est:append-2}
	\sum_{z\in J_r}\int_{Q_r(rz)}|v(x)-v_r(rz)|^2\leq c_P^2r^2\int_{A}|\nabla v|^2\dx.
	\end{equation}
	Then, it remains to estimate the second term in~\eqref{est:append-1}. An application of Jensen's Inequality yields for any $z\in J_r$
	\begin{equation}\label{est:jensen}
	\begin{split}
	\int_{Q_r(rz)}|v_r(rz)-v_r(x)|^2\dx&=\int_{Q_r(rz)}\bigg|\frac{1}{r^n}\int_{Q_r(rz)}v(y)-v(y+x-rz))\dy\bigg|^2\dx\\
	&\leq\frac{1}{r^n}\int_{Q_r(rz)}\int_{Q_r(rz)}|v(y)-v(y+x-rz)|^2\dy\dx\,.
	\end{split}
	\end{equation}
	Since $|x-rz|\leq\sqrt{n}r/2$ for any $x\in Q_r(rz)$, the continuity of the shift operator in Sobolev spaces (see, \eg \cite[Proposition 9.3]{brezis}) yields
	\begin{equation*}
	\big\|v-v\big(\cdot+(x-rz)\big)\big\|_{L^2(Q_r(rz))}\leq\frac{\sqrt{n}}{2}r\|\nabla v\|_{L^2(Q_{(1+\sqrt{n})r}(rz);\R^n)},\quad\text{for every}\ x\in Q_r(rz)\,
	\end{equation*}
	and therefore
	\begin{equation}\label{est:append-3}
	\begin{split}
	\frac{1}{r^n}\int_{Q_r(rz)}\int_{Q_r(rz)}|v(y)-v(y+x-rz)|^2\dy\dx &\leq\frac{n}{4}r^2\int_{Q_{(1+\sqrt{n})r}(rz)}|\nabla v(y)|^2\dy\,.
	\end{split}
	\end{equation}
	Moreover, for any $z\in J_r$ and $x\in Q_{(1+\sqrt{n})r}(rz)$ there exists $x_0\in Q_r(rz)\cap A'$ with $|x-x_0|<(2+\sqrt{n})\sqrt{n}r/2$, hence
	\begin{equation}\label{inclusion2}
	Q_{(1+\sqrt{n})r}(rz)\subset A\quad\text{if }\ r<\frac{2\dist(A',\partial A)}{(2+\sqrt{n})\sqrt{n}}\,.
	\end{equation}
	We observe that the cubes $Q_{(1+\sqrt{n})r}(rz)$, $z\in J_r$ are not pairwise disjoint. Nevertheless, since for any $z_1, z_2\in J_r$ with $Q_{(1+\sqrt{n})r}(rz_1)\cap Q_{(1+\sqrt{n})r}(rz_2)\neq\emptyset$ we have $|z_1-z_2|\leq (1+\sqrt{n})\sqrt{n}$, each cube $Q_{(1+\sqrt{n})r}(rz)$ intersects only $N$ cubes, with $N$ independent of $r$. Thus, summing up the estimates in~\eqref{est:jensen} and~\eqref{est:append-3}, for $r>0$ as in~\eqref{inclusion2} we obtain
	\begin{equation}\label{est:append-4}
	\sum_{z\in J_r}\int_{Q_r(rz)}|v_r(rz)-v_r(x)|^2\dx\leq N\frac{n}{4}r^2\int_A|\nabla v|^2\dx\,.
	\end{equation}
	Eventually the claim follows by combining~\eqref{est:append-1}, \eqref{est:append-2}, and~\eqref{est:append-4}.
	\end{proof}

\section*{Acknowledgments}
The work of R. Marziani and C. I. Zeppieri was supported by the Deutsche Forschungsgemeinschaft (DFG, German Research Foundation) project 3160408400 and under the Germany Excellence Strategy EXC 2044-390685587, Mathematics M\"unster: Dynamics--Geometry--Structure. The work of A. Bach was supported by the National Research Project PRIN 2017 CUP B88D19000330001.

\end{document}